\titleformat{\paragraph}[runin]{\normalsize}{\theparagraph}{3pt}{}{}
\titlespacing{\paragraph}{0pt}{15pt}{16pt}
\renewcommand{\theparagraph}{(\arabic{section}.\arabic{subsection}.\arabic{paragraph})}
\numberwithin{equation}{section} 
\newtheoremstyle{bfnote}
	{}{}
	{\itshape}{}
	{\bfseries}{.}
	{ }
	{\thmname{#1}\thmnumber{ #2}\thmnote{ (#3)}}
\theoremstyle{bfnote}
\newtheorem{thm}{Theorem}[section]
\newtheorem{lem}[thm]{Lemma}
\newtheorem{prop}[thm]{Proposition}
\newtheorem{cor}[thm]{Corollary}
\theoremstyle{definition}
\theoremstyle{remark}
\newtheorem{rem}[thm]{Remark}
\newtheorem{ex}[thm]{Example}
\newcommand{\N}{\mathbb{N}}
\newcommand{\Z}{\mathbb{Z}}
\newcommand{\R}{\mathbb{R}}
\newcommand{\C}{\mathbb{C}}
\newcommand{\mE}{\mathscr E}
\newcommand{\mF}{\mathscr F}
\newcommand{\mH}{\mathscr H}
\newcommand{\mK}{\mathscr K}
\newcommand{\del}{\partial}
\DeclareMathOperator{\im}{Im}
\DeclareMathOperator{\re}{Re}
\DeclareMathOperator{\Vol}{Vol}
\DeclareMathOperator{\Id}{Id}
\DeclareMathOperator{\End}{End}
\DeclareMathOperator{\image}{im}
\DeclareMathOperator{\ind}{ind}
\DeclareMathOperator{\codim}{codim}
\date{}
\author{Thibault Langlais\footnote{Mathematical Institute, University of Oxford, Oxford OX2 6GG, United Kingdom. \newline E-mail address: langlais@maths.ox.ac.uk. ORCID iD: \href{https://orcid.org/0000-0002-6434-2988}{0000-0002-6434-2988}.} }
\title{Analysis and spectral theory of neck-stretching problems}
\begin{document}

\maketitle

\begin{abstract}
    \noindent We study the mapping properties of a large class of elliptic operators $P_T$ in gluing problems where two non-compact manifolds with asymptotically cylindrical geometry are glued along a neck of length $2T$. In the limit where $T \rightarrow \infty$, we reduce the question of constructing approximate solutions of $P_T u = f$ to a finite-dimensional linear system, and provide a geometric interpretation of the obstructions to solving this system. Under some assumptions on the real roots of the model operator $P_0$ on the cylinder, we construct a Fredholm inverse for $P_T$ with good control on the growth of its norm. As applications of our method, we study the decay rate and density of the low eigenvalues of the Laplacian acting on differential forms, and give improved estimates for compact $G_2$-manifolds constructed by twisted connected sum. We relate our results to the swampland distance conjectures in physics.
\end{abstract}

\small
\tableofcontents

\normalsize 

\section{Introduction and motivation}  \label{section:intro}

This paper is concerned with analytical aspects of the study of special geometric structures near degenerate limits. Such structures are defined as solutions of a non-linear PDE, and explicit examples can be quite challenging to find, especially in the absence of symmetries or in the compact setting. A standard way to produce such solutions is to use a gluing-perturbation method. This idea has been applied in various contexts for differential-geometric constructions, such as self-dual metrics \cite{donaldson1989connected,floer1991self}, hyperk\"ahler metrics on K3 surfaces \cite{lebrun1994kummer}, minimal surfaces \cite{kapouleas1990complete,kapouleas1991compact}, or compact manifolds with special holonomy \cite{joyce1996compacti,joyce1996compactii}, to only cite a few of them. In a typical gluing construction, the size of the gluing region is controlled by a parameter $T$, and a key step in the perturbation procedure is to understand the linearised equations for appropriately large values of $T$. Therefore, we shall be concerned with linear operators during most of the paper. We are interested in controlling the mapping properties of elliptic operators in the degenerate limit $T \rightarrow \infty$. For formally self-adjoint operators, this corresponds to understanding the low eigenvalues. 

In order to formulate these questions more precisely, we will restrict ourselves to constructions where the gluing region is modelled on a cylinder of length $2T$ and study the neck-stretching limit $T \rightarrow \infty$. We will especially focus on the case where the model operator on the cylinder has real roots, which substantially complicates the analysis. The aim is to develop a systematic method to deal with such cases. As an application we give a precise description of the lower spectrum of the Laplacian acting on differential forms in the neck-stretching limit. We find that the decay rate and the density of the low eigenvalues is determined by the topology of the cross-section of the gluing region.

The techniques developed in this paper are notably intended for the study of compact manifolds with holonomy $G_2$. The first examples were exhibited by Joyce \cite{joyce1996compacti,joyce1996compactii} using a gluing construction to resolve the singularities of a flat orbifold.  This construction was more recently extended by Joyce--Karigiannis in \cite{joyce2021new}. The only other known construction of compact manifolds with full holonomy $G_2$ is the twisted connected sum, first due to Kovalev \cite{kovalev2003twisted} and later improved by Corti--Haskins--Nordstr\"om--Pacini \cite{corti2013asymptotically,corti2015g} and further extended by Nordstr\"om \cite{nordstrom2018extra}. The twisted connected sum fits into our general setup, and we give improved estimates for this construction. We use these improved estimates to deduce the decay properties of the low eigenvalues of the Laplacian on twisted connected sums and understand their geometric origin.

\paragraph*{} Our results can be related to the swampland conjectures in physics \cite{vafa2005string} (see also \cite{brennan2017string} and \cite{palti2019swampland} for detailed reviews). Indeed, compact manifolds with special holonomy play an important role for compactifications in quantum gravity theories (e.g. Calabi--Yau compactifications in string theory or $G_2$ compactifications in M-theory). The geometry of the internal manifold then governs the low-energy physics in the remaining non-compact dimensions. Typically, the number of massless fields can be deduced from the Betti numbers of the internal manifold, and the masses of the so-called Kaluza--Klein modes are determined by the eigenvalues of the Laplacian acting on differential forms. Therefore, deforming the geometry of the internal manifold amounts to varying the parameters of the low-energy effective field theory. 

Regardless of a particular choice of internal geometry, the swampland distance conjectures \cite{ooguri2007geometry} concern universal features that are expected to hold for any moduli space of effective field theories that admit a consistent quantum gravity completion. It is in particular conjectured that infinite distance limits in such moduli spaces correlate with the appearance of an infinite tower of light states in the effective field theory. Therefore, it is an interesting problem to study the moduli dependence of the spectrum of the Laplacian on manifolds with special holonomy and understand the decay of the low eigenvalues near singular limits. Such a problem has for instance been studied in \cite{ashmore2021moduli} for quintic Calabi--Yau threefolds, using numerical methods. In the present paper, we give an analytical point of view on the study of the towers of light states in the context of $G_2$ geometry. In M-theory compactified on $G_2$-manifolds, the eigenvalues of the Laplacian acting on co-closed $q$-forms for $q \leq 3$ correspond to the squared masses of Kaluza--Klein states. Our analysis provides precise estimates for the decay rate and the distribution of the low eigenvalues of the Laplacian on twisted connected sums and explains how they arise. We hope that this could be of interest for the study of the distance conjectures and shed some light on the geometric origin of the towers of light states, even if in a very specific example.

\paragraph*{\textbf{Organisation of the paper.}} In Section \ref{section:setup} we describe the general gluing problem that we are interested in, set our notations, and define the notions of adapted operators and of substitute kernel and cokernel. Our main results are stated in \S\ref{subsection:results}. Section \ref{section:pdecyl} is concerned with the analysis of translation-invariant PDEs on cylinders and contains most of the technical ingredients underlying our results. Our exposition is self-contained, and for this purpose we include a brief review of standard results in \S\ref{subsection:sepvar}. Section \ref{section:construction} is dedicated to the analysis of the mapping properties of adapted operators in the neck-stretching limit. Under some assumptions we prove a theorem on the invertibility of adapted operators (Theorem \ref{thm:main}), but our method is more general and we also comment on how to adapt it in different contexts. In the subsequent sections we apply our techniques to the study of the low eigenvalues of the Laplacian (Section \ref{section:spectral}) and to the twisted connected sum construction of compact $G_2$-manifolds (Section \ref{section:compact}).

\paragraph*{\textbf{Acknowledgements.}} I would like to thank Tristan Ozuch for helpful discussions which clarified some points in the analysis presented in this paper. I am also grateful to my supervisor Jason Lotay for his support and advice. Research supported by scholarships from the Clarendon Fund and the Saven European Programme.

\section{Setup and discussion of results} 	\label{section:setup}

In this section we explain our setup and formulate the main results of this paper. The gluing problem under consideration is described in \S\ref{subsection:gluingpb}, in which we introduce the building blocks and the class of adapted operators that we are interested in. In \S\ref{subsection:substk} we motivate and introduce the notions of substitute kernel and cokernel for adapted operators, following ideas present for instance in \cite{kapouleas1990complete,kapouleas2004constructions} or \cite{kovalev2000gluing}. Our main results are discussed in \S\ref{subsection:results} and related to the swampland distance conjectures in \ref{par:relsdc}. Last, we give an overview of our strategy in \ref{par:strategy}.

    \subsection{Model gluing problem}		\label{subsection:gluingpb}

\paragraph{} Before describing the type of gluing problems we are interested in, we need to recall a few standard definitions, starting with the notion of \emph{Exponentially Asymptotically Cylindrical (EAC)} manifold. Let $Z$ be an oriented non-compact manifold of dimension $n$ and $X$ an oriented compact manifold of dimension $n-1$. We say that $Z$ is asymptotic to the cylinder $Y = \R \times X$ at infinity if there exist a compact $K \subset Z$ and an orientation-preserving diffeomorphism $\phi : (0,\infty) \times X \rightarrow Z \backslash K$. The compact manifold $X$ is called the \emph{cross-section} of $Z$. It will often be useful to pick a positive function $\rho : Z \rightarrow \R$ such that $\rho(\phi(t,x)) = t$ for $(t,x) \in [1, \infty) \times X$ and $\rho < 1$ outside of $\phi([1,\infty) \times X)$. Following the terminology of \cite{haskins2015asymptotically}, we will call such function a \emph{cylindrical coordinate function}.

We say that a Riemannian metric $g$ on $Z$ is EAC of rate $\mu > 0$  if we have, for all integers $l \geq 0$:
\begin{equation} 
    | \nabla^l_Y (\phi^* g - g_Y) |_{g_Y} = O \left( e^{-\mu t} \right)
\end{equation}
as $t \rightarrow \infty$, where $g_Y = dt^2 + g_X$ is a cylindrical metric on $Y = \R \times X$, $\nabla_Y$ the associated Levi-Civita connection and $| \cdot |_{g_Y}$ the associated norm on tensor bundles.

Given the above data, we may define a notion of \emph{adapted bundle} as follows. Any vector bundle $E_0 \rightarrow X$ equipped with a metric $h_0$ and a connection $\nabla_0$ can be extended to a vector bundle $\underline E_0 \rightarrow Y$ with translation-invariant metric and connection $(\underline h_0, \underline \nabla_0)$ (see Section \ref{section:pdecyl} for more details). We call such bundles on $Y = \R \times X$ translation-invariant bundles. Let $E \rightarrow Z$ be a vector bundle on $Z$, endowed with a metric $h$ and a connection $\nabla$. We say that $E$ is an adapted bundle on $(Z,g)$ if there exist a translation-invariant vector bundle $(\underline E_0,\underline h_0, \underline \nabla_0)$ and a bundle isomorphism $\Phi_E : {\underline E_0}_{|(0,\times X)} \rightarrow E_{| Z \backslash K}$ covering $\phi$, such that for all integers $l \geq 0$:
\begin{equation}
    | \underline \nabla_0^l (\Phi_E^* h - \underline h_0)|_0 = O(e^{-\mu t}), ~~ \text{and} ~~ | \underline \nabla_0^l(\Phi_E^* \nabla - \underline \nabla_0) |_0 = O \left( e^{- \mu t} \right)
\end{equation}
as $t \rightarrow \infty$.

\begin{rem}
    This definition is valid for both real and complex vector bundles. As we will make heavy use of the Fourier transform in Section \ref{section:pdecyl}, we usually assume that we are dealing with complex vector bundles and that the bundle metric is hermitian. Our result will also apply to real vector bundles by considering their complexification.
\end{rem}

Last, we may also define the notion of \emph{adapted differential operator} between adapted bundles. Let $E$, $F$ be adapted bundles on $Z$ and $P : C^\infty(E) \rightarrow C^\infty(F)$ be a differential operator of order $k \geq 1$. If $u$ is a smooth section of $\underline E_0$ defined over the half-cylinder $(0,\infty)$, let:
\begin{equation}
    \widetilde P u = \Phi_F^{-1} P \Phi_E u.
\end{equation}
This defines a differential operator $\widetilde P : C^\infty(\underline E_0) \rightarrow C^\infty (\underline F_0)$ over the cylinder $(0,\infty) \times X$, modelling the action of $P$ on sections supported in $Z \backslash K$. The operator $\widetilde P$ can be written in the form:
\begin{equation}
    \widetilde P = \sum_{j=0}^k A_{k-j}(t)\del_t^j
\end{equation}
where $\del_t$ is the covariant derivative in the direction $\frac{\del}{\del t}$ for the connection $\underline \nabla_0$ on $E$ (which coincides with the Lie derivative of $\frac{\del}{\del t}$ because of translation-invariance), and $A_{k-j}(t) : C^\infty(E_0) \rightarrow C^\infty(F_0)$ are differential operators depending smoothly on $t.$ We say that $P$ is adapted (with exponential rate $\mu > 0$) if there exists a translation-invariant differential operator $P_0 : C^\infty (E_0) \rightarrow C^\infty(F_0)$ of the form:
\begin{equation}
    P_0 = \sum_{j=0}^k A_{k-j}\del_t^j
\end{equation}
such that for any smooth section $u$ of $\underline E_0$ defined on $(0,\infty) \times X$ and for any $l\geq 0$ and $0 \leq j \leq k$ we have:
\begin{equation}
    | \underline \nabla_0^l (A_{k-j}(t)u - A_{k-j}u ) |_0 = O \left( e^{-\mu t} \sum_{i \leq l} |\underline \nabla_0^i u|_0 \right)
\end{equation}
as $t \rightarrow \infty$. That is, we essentially want the coefficients of $\widetilde P - P_0$ and all their derivatives to have exponential decay when $t \rightarrow \infty$. The operator $P_0$ is called the \emph{indicial operator} of $P$. Note that the formal adjoint of an adapted $P$ is also adapted, and its indicial operator is naturally $P_0^*$.

\begin{ex}
    On an EAC manifold $(Z,g)$ with cross-section $X$, the bundles $TZ$, $TZ^{\otimes l}$ and $\Lambda^q T^* Z$ are adapted, endowed with the Levi-Civita connection and the metric induced by $g$. The operators $d+d^*$ and $\Delta = dd^* + d^*d$ are adapted.
\end{ex}

\paragraph{} We now describe the general gluing problem we are interested in. Let $Z_1$ and $Z_2$ be two EAC manifolds and assume that the cross-section of $Z_2$ is the same as the cross-section $X$ of $Z_1$, but with opposite orientation. By definition, there exist compacts $K_i \subset Z_i$ and diffeomorphisms $\phi_i : (0,\infty) \times X_i \rightarrow Z_i \backslash K_i$ where $X_1 = X = \overline X_2$, and we can pick cylindrical coordinate functions $ \rho_i : Z_i \rightarrow \R_{>0}$. For $T \geq 0$ we can construct an oriented compact manifold $M_T$ by gluing the domains $\{ \rho_1 \leq T+2 \} \subset Z_1$ and $\{ \rho_2 \leq T+2 \} \subset Z_2$ along the annuli $\{ T \leq \rho_i \leq T+2 \} \simeq [-1,1] \times X$ with the identification:
\begin{equation}        \label{eq:gluingid}
    \phi_1(T+1+t,x) \simeq \phi_2(T+1-t,x), ~~ \forall (t,x) \in [-1,1] \times X.
\end{equation}
Define a smooth function $\rho_T$ on $M_T$ by:
\begin{equation*}
    \rho_T \equiv
    \begin{cases}
        \rho_1 - T - 1  & \text{in} ~~ \{ \rho_1 \leq T+2 \}\\
        T+ 1 - \rho_2   & \text{in} ~~ \{ \rho_2 \leq T+2 \}
    \end{cases}.
\end{equation*}
This is well-defined as $\rho_1 - T - 1$ coincides with $T+ 1 - \rho_2$ under the identification \eqref{eq:gluingid}. Intuitively, the function $\rho_T$ parametrises the neck of $M_T$. In particular, the domain $\{ | \rho | \leq T \}$ is diffeomorphic to the finite cylinder $[-T, T] \times X$. Our goal is to study the mapping properties of elliptic operators defined on $M_T$ as $T$ becomes very large, and relate it to the corresponding properties of operators on $Z_i$.

To that end, suppose that the manifolds $Z_i$ are endowed with EAC metrics $g_i$ asymptotic to the same translation-invariant metric $g_Y = dt^2 + g_0$ on $Y = \R \times X$. It will also be useful to fix a cutoff function $\chi : \R \rightarrow [0,1]$ such that $\chi \equiv 0$ on $(-\infty, - \frac 1 2]$ and $\chi \equiv 1$ on $[\frac 1 2, + \infty)$. If $T \in \R$ we let $\chi_T(t) = \chi(t-T)$. Then, for $T$ large enough
\begin{equation*}
    g_{i,T} = (1-\chi_T(\rho_i)) g_i + \chi_T(\rho_i) g_Y
\end{equation*}
is a Riemannian metric on $Z_i$ which coincides with $g_i$ on $\{ \rho_i \leq T- \frac 1 2 \}$ and with $g_Y$ on $\{ \rho_i \geq T + \frac 1 2 \}$. Moreover, the difference $g_i - g_{i,T}$ and all their derivatives are uniformly bounded by $O(e^{- \mu T})$. Note that here we implicitly identify $Z_i \backslash K_i$ with the half cylinder $(0,\infty) \times X_i$ to make notations lighter. We can patch $g_{1,T}$ and $g_{2,T}$ to form a Riemannian metric $g_T$ on $M_T$, defining:
\begin{equation*}
    g_T \equiv
    \begin{cases}
        g_{1,T}  & \text{if} ~~ \rho_T \leq 0 \\
        g_{2,T}  & \text{if} ~~ \rho_T \geq 0
    \end{cases} .
\end{equation*}
Similarly, if we have adapted bundles $(E_i, h_i, \nabla_i)$ on $Z_i$ such that their asymptotic models are both isomorphic to the same translation-invariant vector bundle $(\underline E_0,\underline h_0,\underline \nabla_0)$ on $\R \times X$, we can use the same cutoff procedure to patch them up on $M_T$ and form a vector bundle $E_T$ equipped with a metric $h_T$ and a connection $\nabla_T$.

\paragraph{} Consider matching adapted bundles $E_i, F_i$ on $Z_i$ ($i = 1,2$) asymptotic to the same translation-invariant bundles $\underline E_0, \underline F_0$, and adapted elliptic operators $P_i : C^{\infty}(E_i) \rightarrow C^\infty(F_i)$ of order $k$. Denote by $P_{i,0}(x,\del_x,\del_t) : C^\infty(\underline E_0) \rightarrow C^\infty(\underline F_0)$ the indicial operator of $P_i$, where we use $\del_x$ as a loose notation for the derivatives along the cross-section $X$. In order to patch up these operators we need to assume the following compatibility condition \cite{kovalev2000gluing}:
\begin{equation}		\label{eq:compp}
    P_{2,0}(x,\del_x,\del_t) = P_{1,0}(x,\del_x,-\del_t).
\end{equation}
Assuming that it is satisfied, define:
\begin{equation*}
    P_{i,T} = (1-\chi_T(\rho_i)) P_i + \chi_T(\rho_i) P_{i,0}
\end{equation*}
which coincides with $P_i$ for $\rho_i \leq T-\frac 1 2$ and with $P_{i,0}$ for $\rho_i \geq T + \frac 1 2$. For large enough $T$, the operators $P_{i,T}$ are elliptic, and moreover the coefficients of $P_i-P_{i,T}$ and all their derivatives are uniformly bounded by $O(e^{-\mu T})$. Patching $P_{1,T}$ and $P_{2,T}$ as above, we obtain a family of operators $P_T : C^\infty(E_T) \rightarrow C^\infty(F_T)$ which are elliptic for large enough $T$. Elliptic regularity on compact manifolds implies that the action of $P_T$ on Sobolev spaces of sections induce Fredholm maps. Our goal is to construct Fredholm inverses for these maps, with a good control on their norm as $T \rightarrow \infty$. 

\begin{rem}     \label{rem:twist}
    In our applications, we will also be interested in a variation of the above gluing construction where the EAC manifolds $Z_1$ and $Z_2$ are glued along a non-trivial isometry $\gamma : X \rightarrow X$. This corresponds to replacing the identification \eqref{eq:gluingid} by:
    \begin{equation}
        \phi_1(T+1+t,x) \simeq \phi_2(T+1-t,\gamma(x)), ~~ \forall (t,x) \in [-1,1] \times X.
    \end{equation}
    All the matching conditions have to be changed accordingly, but otherwise everything we will do applies without modification, up to a mere change of notations. This will be important in particular for the construction of compact $G_2$-manifolds by twisted connected sums that we study in \S\ref{subsection:tcs}.
\end{rem}

\paragraph{} Before explaining our results in more detail in the next part, let us make our conventions for Sobolev and $C^l$ norms explicit. For $p \geq 1$ and $l \in \N$, the $W^{l,p}$-norm of a section $u \in C^\infty(E_T)$ can be defined as:
\begin{equation}
    \| u \|_{W^{l,p}} = \sum_{ j \leq l } \| \nabla^j_T u \|_{L^p}
\end{equation}
where the fibrewise norm of $\nabla^j_T u$ is computed with the metrics $h_T$, $g_T$ and we integrate over the volume form of $g_T$. The Sobolev space $W^{l,p}(E_T)$ is defined as the completion of $C^\infty(E_T)$ for the $W^{l,p}$-norm. 

Moreover, the $C^l$ norm of a smooth section $u \in C^\infty(E_T)$ will be defined as 
\begin{equation}
    \| u \|_{C^l} = \sum_{j \leq l} \| \nabla^j_T u \|_{C^0}
\end{equation}
where the fibrewise norm of $\nabla^j_T u$ is as before relative to the metrics $h_T$ and $g_T$.

With these conventions, one can deduce from standard results about analysis on EAC manifolds (see \S\ref{subsection:lmt}) the following uniform a priori estimates:

\begin{prop}  \label{prop:unifapriori}
    With the above setup, let $p > 1$ and $l \in \N$. Then the map
      $$ P_T : W^{k+l,p}(E_T) \rightarrow W^{l,p}(F_T) $$
    is uniformly bounded as $T \rightarrow \infty$. Moreover there exist constants $C, C' > 0$ such that for $T$ large enough and for any $u \in W^{k+l,p}(E_T)$ we have:
      $$ \| u \|_{W^{k+l,p}} \leq C \left( \| P_T u \|_{W^{l,p}} + \| u \|_{L^p} \right) .$$
\end{prop}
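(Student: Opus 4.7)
The plan is to combine a standard partition-of-unity argument for elliptic estimates with the observation that the family $(M_T, g_T)$ has uniformly bounded geometry as $T \rightarrow \infty$. The uniform boundedness of $P_T$ between the Sobolev (resp. Hölder) spaces is nearly immediate from the construction: by definition $P_T$ is obtained by interpolating via the translated cutoff $\chi_T$ between the fixed operators $P_i$ and $P_{i,0}$, and then patching the two sides along the neck; since $\chi_T$ is a translate of a fixed $\chi$, and $g_T$ is similarly assembled from fixed data, the coefficients of $P_T$ have $C^\infty$-norms (measured with respect to $g_T$) that are bounded independently of $T$, which yields the asserted operator bound.

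For the a priori estimates, the key observation is that $(M_T, g_T)$ has uniformly bounded geometry: there exists $r_0 > 0$, independent of $T$, such that every geodesic ball $B(x, r_0) \subset M_T$ admits normal coordinates in which $g_T$ is $C^\infty$-close to the flat metric, and the coefficients of $P_T$ in those coordinates are $C^\infty$-bounded, with constants depending neither on $x$ nor on $T$. Indeed, up to a perturbation of order $O(e^{-\mu T})$, a neighborhood of any point of $M_T$ is isometric to a piece of one of three fixed Riemannian manifolds: $(Z_1, g_1)$, $(Z_2, g_2)$, or the model cylinder $(Y, g_Y)$, and all of these have bounded geometry. Choose a cover of $M_T$ by balls $B_\alpha = B(x_\alpha, r_0)$ with uniformly bounded multiplicity, and a subordinate partition of unity $\{\psi_\alpha\}$ with $C^\infty$-norms bounded independently of $T$.

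The standard interior elliptic estimate on each such ball, with constant independent of $T$ by bounded geometry, yields for any $u \in C^\infty(E_T)$:
\begin{equation*}
    \| \psi_\alpha u \|_{W^{k+l,p}}^p \leq C_0^p \left( \| P_T (\psi_\alpha u) \|_{W^{l,p}}^p + \| \psi_\alpha u \|_{L^p}^p \right).
\end{equation*}
Writing $P_T(\psi_\alpha u) = \psi_\alpha P_T u + [P_T, \psi_\alpha] u$, where $[P_T, \psi_\alpha]$ is of order $k-1$ with uniformly bounded coefficients, summing over $\alpha$ using the bounded-multiplicity property, and absorbing the lower-order term coming from the commutator via the Sobolev interpolation inequality
\begin{equation*}
    \| u \|_{W^{k+l-1,p}} \leq \epsilon \| u \|_{W^{k+l,p}} + C(\epsilon) \| u \|_{L^p},
\end{equation*}
which again holds with $T$-independent constants thanks to bounded geometry, we recover the claimed estimate for a sufficiently small $\epsilon$. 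The Hölder case is entirely parallel, with local Schauder estimates replacing the $L^p$-elliptic estimates, and a supremum over $\alpha$ in place of the sum of $p$-th powers.

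The main obstacle is verifying that all the constants entering this argument — in the local elliptic or Schauder estimate on balls of radius $r_0$, in the interpolation inequality, and in the partition-of-unity construction — are genuinely independent of $T$. This reduces to establishing the uniform bounded-geometry property sketched above, which is essentially built into the construction of $g_T$ and $P_T$ from the fixed asymptotic model; once this is granted, the remainder of the proof is a routine adaptation of the classical elliptic estimate on a compact manifold.
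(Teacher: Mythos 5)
Your argument is correct, and it gives a complete, self-contained proof where the paper itself merely asserts the result as a consequence of ``standard results about analysis on EAC manifolds'' (referring to \S\ref{subsection:lmt}). The paper's intended route is apparently to decompose $M_T$ into three large overlapping pieces --- the two compact cores and the finite cylindrical neck --- and to invoke, on each piece, either the translation-invariant estimates of Proposition~\ref{prop:aprioriest} or the weighted Lockhart--McOwen a~priori estimates of \S\ref{subsection:lmt}, patching them together with two cutoffs. Your approach is finer-grained: you cover $M_T$ by uniformly small balls, use the uniform bounded geometry of the family $(M_T,g_T)$ to get interior elliptic (resp.\ Schauder) estimates on each ball with $T$-independent constants, and sum. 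The two routes buy slightly different things: the paper's three-piece decomposition fits naturally with the machinery already being developed (weighted spaces, indicial roots) and is the shorter route \emph{given} that machinery, whereas your partition-of-unity argument is entirely elementary and does not rely on the weighted EAC theory at all, so it is more portable and makes the uniformity in $T$ manifest at a local level.

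Two small points worth tightening if you write this up. First, when you sum $\|\psi_\alpha u\|_{W^{k+l,p}}^p$ over $\alpha$, you need a lower bound of the form $\|u\|_{W^{k+l,p}}^p \lesssim \sum_\alpha \|\psi_\alpha u\|_{W^{k+l,p}}^p + \|u\|_{L^p}^p$; the cleanest way is to choose the partition of unity so that on slightly shrunk balls $B'_\alpha$ still covering $M_T$ one has $\psi_\alpha \geq \tfrac 1 2$, then control $\nabla^j u$ on $B'_\alpha$ by $\nabla^j(\psi_\alpha u)$ up to lower-order commutator terms, and absorb those by the same interpolation inequality you already use. Second, the uniform interior elliptic estimate requires uniform ellipticity of $P_T$, not just uniformly bounded coefficients; this does hold for $T$ large since the symbol of $P_T$ converges locally uniformly to those of $P_1$, $P_2$, $P_0$, but it is worth flagging explicitly, as the paper only remarks that $P_T$ is elliptic for $T$ large.
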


\begin{rem}     \label{rem:sobemb}
    In the same way, it follows from the embedding theorems of \cite{lockhart1987fredholm} for weighted spaces on EAC manifolds that the constants in the Sobolev embeddings $W^{r,p} \hookrightarrow W^{s,q}$ and $W^{r,p} \hookrightarrow C^l$, in the range where such embeddings exist and are continuous, are in fact uniformly bounded as $T \rightarrow \infty$. This will be useful in Sections \ref{section:spectral} and \ref{section:compact}.
\end{rem}

By elliptic regularity, the above maps can be inverted in the $L^2$-orthogonal complements of the kernels of $P_T$ and of its adjoint. However, since the dimensions of these spaces are not continuously invariant (only the index is), they do depend on the precise way we take cutoffs to define our gluing, and so will the norm of the inverse. In order to make general statements, we would like to define notions of substitute kernel and cokernel in the fashion of \cite{kapouleas1990complete} (see also \cite[\S18]{kapouleas2004constructions}), determined by the gluing data and in the complement of which we have a good control on the norm of the inverse of $P_T$. Under the restricting assumption that the indicial operator $P_0 = P_{1,0}$ is invertible, these have been defined and studied in \cite{kovalev2000gluing}. However in many cases of interest this is not satisfied, as the indicial operator may have real roots. In the next section we will define notions of substitute kernel and cokernel adapted to the case where there are real roots.

  \subsection{Substitute kernel and cokernel}   \label{subsection:substk}

\paragraph{} In order to define the substitute kernel and cokernel, a good understanding of the mapping properties of translation-invariant operators on cylinders and of adapted operators on EAC manifolds is crucial. For completeness, the results that we need are gathered in \S\ref{subsection:sepvar} and \S\ref{subsection:lmt}. Original references are \cite{agmon1961properties} and \cite{lockhart1985elliptic}, as well as \cite{melrose1993atiyah} for a detailed discussion of analysis in the broader context of $b$-calculus.

In the situation described in the previous part, let $P_0 = P_{1,0} : C^\infty(\underline E_0) \rightarrow C^\infty(\underline F_0)$ be the indicial operator of $P_1$, acting on the cylinder $Y = \R \times X$. Points in $Y$ will be denoted by $y = (t,x)$. A particularly important role in our analysis is played by solutions of the homogeneous equation $P_0 u = 0$ of the form:
\begin{equation*}
    u(t,x) = \sum_{j=1}^m e^{i \lambda_j t}u_j(t,x)
\end{equation*}
where $\lambda_1,...,\lambda_m$ are real numbers and the sections $u_j$ are polynomial in the variable $t$. Such solutions are called \emph{polyhomogeneous solutions} of rate $0$, and we denote by $\mE$ the vector space they span. As a matter of general theory, this is a finite-dimensional space, and in particular there are only finitely many values $\lambda \in \R$ such that the homogeneous equation $P_0 u = 0$ admits a non-trivial solution of the form $u(t,x) = e^{i \lambda t} u_\lambda(t,x)$, where $u_\lambda$ is polynomial in $t$. These values are called the \emph{real roots} of $P_0$ (see Section \ref{section:pdecyl} for a detailed discussion). In Section \ref{section:pdecyl} we will see that each root $\lambda_j$ has a certain order $d_j \in \N^*$ such that the sections $u_j$ in a polyhomogeneous solution of rate $0$ are polynomials of order at most $d_j -1$ in the variable $t$. We will usually denote by $d$ the maximum of the orders $d_j$. Let us point out here that the real roots of the formal adjoint $P^*_0$ are the same as the real roots of $P_0$, and denote by $\mE^*$ the space of polyhomogeneous solutions of rate $0$ of $P^*_0 u = 0$. It follows from the compatibility condition \eqref{eq:compp} that the space of polyhomogeneous solutions of $P_{2,0} u = 0$ of rate $0$ is $\{u(-t,x), u \in \mE \}$, and similarly for the adjoint operators.

Let us denote by $\mK_i$ the space of solutions of $P_i u = 0$ with sub-exponential growth and $\mK_{i,0}$ the subspace of decaying solutions, for $i=1,2$. By Lockhart--McOwen theory (\cite{lockhart1985elliptic}, see also \S\ref{subsection:lmt} for more details), $\mK_i$ has finite dimension and each of its elements is asymptotic to a polyhomogeneous solution of $P_{i,0} u = 0$ with rate $0$, up to exponentially decaying terms. More precisely, for any $u \in \mK_1$, there exists a polyhomogeneous solution $u_0 \in \mE$ such that for any $l \in \N$:
\begin{equation*}
    |\underline \nabla_0^l (u(t,x) - u_0(t,x))|_0 = O \left( e^{- \delta t} \right)
\end{equation*} 
when $t \rightarrow \infty$, for any sufficiently small $\delta > 0$. Here we implicitly identify $u$ over $Z_1 \backslash K_1$ with a section of $\underline E_0$ over $(0,\infty) \times X$. Therefore, we can define a linear map $\kappa_1 : \mK_1 \rightarrow \mE$, such that for any $u \in \mK_1$, the difference $u-\kappa_1[u]$ and all its derivatives have exponential decay at infinity. Taking care of the fact that we need to change the sign of the variable $t$, we can similarly define a map $\kappa_2 : \mK_2 \rightarrow \mE$ such that $| u(x, t) - \kappa_2[u](x,-t)|_0 = O(e^{- \delta t })$ as $t \rightarrow \infty$ for all $u \in \mK_2$, with the usual identifications. For $i=1,2$, the kernel of the map $\kappa_i$ in $\mK_i$ is $\mK_{i,0}$. Considering adjoint operators, we may also define $\mK^*_i$, $\mK^*_{i,0}$ and linear maps $\kappa_i^* : \mK_i^* \rightarrow \mE^*$.

\paragraph{} With these notations in hand, let $u_1 \in \mK_1$, $u_2 \in \mK_2$ and fix $T >0$. We say that $u_1$ and $u_2$ are matching at $T$ if the following condition is satisfied:
\begin{equation}		\label{eq:matchingu}
    \kappa_1[u_1](t+T+1,x) = \kappa_2[u_2](t-T-1,x), ~~ \forall (t,x) \in \R \times X.
\end{equation}
Given a matching pair of solutions $(u_1,u_2)$, we can define a section of the bundle $E_T \rightarrow M_T$ as follows:
\begin{equation*}
    u_T = (1-\chi_{T+1}(\rho_1)) u_1 + (1-\chi_{T+1}(\rho_2)) u_2
\end{equation*}
where we consider $\chi(\rho_i)u_i$ as a section of $E_T$ supported in the domain $\{ \rho_i \leq T+2 \} \subset M_T$. In particular, $u_T \equiv u_1$ in the domain $\{ \rho_1 \leq T + \frac 1 2 \}$, $u_T \equiv u_2$ in $\{ \rho_2 \leq T + \frac 1 2 \}$ and it smoothly interpolates between the two in $\{ |\rho_T | \leq \frac 1 2 \}$. It is easy to see that $P_T u_T \equiv 0$ outside of the annulus $\{ |\rho_T | \leq \frac 3 2 \}$. Moreover, the matching condition \eqref{eq:matchingu} ensures that for any $l \in \N$, small enough $\delta > 0$ and arbitrary norms on $\mK_1$ and $\mK_2$, there exists a constant $C>0$ independent of $T \geq 1$ such that:
\begin{equation}        \label{eq:badcst}
    \|P_T u_T \|_{C^l} \leq C e^{- \delta T}(\|u_1\| + \| u_2\|)
\end{equation}
for any matching pair of solutions $(u_1,u_2)$. In that sense, $u_T$ is an approximate solution of $P_T u = 0$. The \emph{substitute kernel} $\mK_T$ of $P_T$ is defined as the finite-dimensional subspace of $C^\infty(E_T)$ of approximate solutions constructed in this way, from a matching pair $(u_1,u_2)$ of solutions of $P_i u = 0$. Similarly we define the \emph{substitute cokernel} $\mK^*_T$ as the substitute kernel of $P^*_T$. These definitions depend on the arbitrary choice of cutoff function $\chi$, but since the difference for two different choices of cutoff function would decay exponentially with $T$ this will not be an issue.

\paragraph{} \label{par:ranksystem} For these notions of substitute kernel and cokernel to be convenient to handle in practice, it is simpler to assume that for $T$ large enough the dimensions of $\mK_T$ and $\mK^*_T$ are independent of $T$. This is automatically satisfied if the indicial operator $P_0$ has only one root. Indeed, in this case we can express the matching condition at $T$ as a finite-dimensional linear system depending polynomially on $T$ by choosing convenient bases for $\image \kappa_1$, $\image \kappa_2$ and $\mE$. The minors of this system are polynomial in $T$, and therefore are either identically $0$ or do not vanish for $T$ large enough. Hence the rank of the system do not depend on $T$ for $T$ large enough, and neither does the dimension of its kernel. We can argue similarly for the substitute cokernel $\mK^*_T$.
  
For more general operators, the matching condition will be expressed as a finite-dimensional linear system with coefficients depending analytically on $T$, and although the non-trivial minors of the system only have isolated zeroes we cannot ensure that there are only finitely many of them. This is the situation that we want to avoid. Therefore we will assume that $P$ has only one real root to state our main result, about the existence of a Fredholm inverse for $P$ in the complement of the substitute kernel and cokernel. This is sufficient for our applications in Sections \ref{section:spectral} and \ref{section:compact}. However the method we develop is more general, and for most of the paper we do not need to take any restricting assumptions on the roots of the indicial operator.
  
Assuming that the spaces $\mK_T$ and $\mK^*_T$ have constant dimension for $T$ large enough, it follows from \eqref{eq:badcst} that for any Sobolev norm $W^{k,p}$ and any small enough $\delta > 0$, there exists a constant $C>0$ such that for $T$ large enough and for any $u \in \mK_T$:
\begin{equation*}
    \| P_T u \|_{W^{k,p}} \leq C e^{- \delta T} \| u \|_{L^p}.
\end{equation*}
Similar bounds hold for $P^*_T$. Hence there is no hope to have a control on the norm of the inverse of $P_T$ better than $O(e^{\delta T})$ if we do not work on the complement of $\mK_T$ and $\mK^*_T$.

    \subsection{Results and strategy}       \label{subsection:results}
  
\paragraph{} We keep the setup and notations of the previous part. Our main result is the following theorem, which says that under the limiting assumption described above we can find a Fredholm inverse for $P_T$ in the complement of the substitute kernel and cokernel, with norm bounded by a power of $T$:

\begin{thm}  \label{thm:main}
    Let $p > 1$ and $l \in \N$, and assume that $P_0$ has only one real root. Then there exist constants $C, C^\prime > 0$ and an exponent $\beta \geq  0$ such that for $T$ large enough the following holds. 
    
    For any $f \in W^{l,p}(F_T)$, there exist a unique $u \in W^{k+l,p}(E_T)$ orthogonal to $\mK_T$ and a unique $w \in \mK^*_T$ such that $f = P_T u + w$. Moreover, $u$ satisfies the bound:
    \begin{equation*}
        \| u \|_{W^{k+l,p}} \leq C \| f \|_{W^{l,p}}+ C^\prime T^\beta \|f\|_{L^p}.
    \end{equation*}
\end{thm}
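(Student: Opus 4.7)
The strategy is to reduce the theorem, via the uniform a priori estimate of Proposition \ref{prop:unifapriori}, to constructing $u \in L^p$ orthogonal to $\mK_T$ and $w \in \mK^*_T$ with $P_T u + w = f$ and $\|u\|_{L^p} \leq C T^\beta \|f\|_{L^p}$; the a priori estimate then automatically upgrades this to the full $W^{k+l,p}$ bound with the separate constants $C$ and $C'$ as stated. This $L^p$-level statement will be obtained from an explicit approximate right inverse $G_T$ whose residual is exponentially small in $T$, followed by a Neumann-series argument. The construction of $G_T$ proceeds by splitting $f$ between the two EAC pieces, inverting each $P_i$ on $Z_i$ via Lockhart--McOwen theory, and patching the local solutions across the neck.

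More concretely, I would fix cutoffs localising to each half of $M_T$ to write $f = f_1 + f_2$ with $f_i$ naturally transplanted to a compactly supported section of $F_i$ on $Z_i$. By Lockhart--McOwen theory (\S\ref{subsection:lmt}), for any sufficiently small $\delta > 0$ lying in the first spectral gap below the real axis of $P_0$, the operator $P_i$ acts Fredholm between the appropriate weighted spaces with cokernel $\mK^*_{i,0}$ and with extended image consisting of polyhomogeneous sections whose asymptotic data lies in $\image \kappa_i \subset \mE$. Subtracting the $L^2$-projection $\pi^*_i(f_i)$ onto $\mK^*_{i,0}$ yields a solution $u_i$ with polyhomogeneous asymptotic component in $\image \kappa_i$, depending linearly and continuously on $f_i$. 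Patching $u_1$ and $u_2$ with a cutoff centred at the middle of the neck produces an approximate solution $u_T^{(0)} = G_T^{(0)} f \in W^{k+l,p}(E_T)$, and the residual $P_T u_T^{(0)} - f$ splits into three contributions: an exponentially small commutator coming from $P_i - P_{i,T}$ interacting with the cutoffs, the asymptotic mismatch $\kappa_1[u_1](\,\cdot\, + T + 1) - \kappa_2[u_2](\,\cdot\, - T - 1)$ in the neck, and the two projections $\pi^*_i(f_i)$.

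The single-root hypothesis on $P_0$ enters precisely in the analysis of the matching defect. As explained in \ref{par:ranksystem}, the matching condition is a finite-dimensional linear system in $T$ whose matrix depends \emph{polynomially} on $T$ and has stable rank for large $T$, so its Moore--Penrose pseudo-inverse has norm $O(T^\beta)$ for some $\beta \geq 0$; this is exactly the origin of the polynomial factor in the statement. I would then correct $u_T^{(0)}$ by an element of $\mK_1 \oplus \mK_2$ (invisible to the $P_i$, hence not changing the local residuals) chosen via this pseudo-inverse to cancel the matching defect modulo its unavoidable cokernel. Invoking the dual construction for $P^*$ with the compatibility \eqref{eq:compp}, one checks that the unresolvable part of the matching defect combines with $\pi^*_1(f_1) + \pi^*_2(f_2)$ into a genuine element $w \in \mK^*_T$, so that the modified approximate inverse $G_T$ satisfies $P_T G_T f = f - w + R_T f$ with $\|R_T f\|_{L^p} = O(e^{-\delta T} T^\beta)\|f\|_{L^p}$. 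For $T$ large this is a contraction, a Neumann-series iteration produces the exact inverse with the announced $L^p$ bound, and Proposition \ref{prop:unifapriori} delivers the $W^{k+l,p}$ estimate. \emph{The principal obstacle} is the matching step: without the single-root hypothesis the matching matrix would only be real-analytic in $T$, its kernel dimension could jump at arbitrarily large $T$, and no uniform polynomial bound on the pseudo-inverse could be extracted --- this is exactly why the theorem is stated under this assumption, while the broader technology of Sections \ref{section:pdecyl}--\ref{section:construction} treats the general setting.
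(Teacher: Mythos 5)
Your proposal skips what is the essential first step of the paper's construction: solving the indicial equation $P_0 u_0 = f_0$ in the neck region using the explicit right inverse $Q_0$ of Theorem \ref{thm:localsol}. You instead split $f = f_1 + f_2$ by a cutoff and solve $P_i u_i \approx f_i$ directly on $Z_i$ via Lockhart--McOwen weighted analysis. This cannot give the theorem. The sections $f_i$ obtained by restricting $f$ to half of $M_T$ are supported on a region of diameter comparable to $T$ in $Z_i$ and carry no exponential decay; their $L^p_\delta$-norm is therefore of order $e^{\delta T}\|f\|_{L^p}$, not polynomial. Lockhart--McOwen theory inverts $P_i$ with uniform constants only between weighted spaces, and passing between a weighted norm and the unweighted $L^p$-norm over a region of diameter $T$ costs precisely this factor $e^{\delta T}$. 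Concretely, your $u_i$ has $L^p$-norm on $\{\rho_i \leq T+1\}$ bounded only by $e^{\delta T}\|f\|_{L^p}$, and the same exponential loss propagates into the asymptotic mismatch and into the residual of $G_T$: neither the polynomial bound on $\|u_T^{(0)}\|$ nor the exponential smallness of $R_T$ that your Neumann series requires is achievable along these lines.

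The paper resolves exactly this issue by first taking the cutoff $f_0 = \zeta_1 f$ and solving $P_0 u_0 = f_0$ on the cylinder via $Q_0$, with the bound $\|u_0\|_{W^{k,p}([-T,T]\times X)} \leq C T^d \|f\|_{L^p}$ from Theorem \ref{thm:localsol}. After writing $f - P_T(\zeta_0 u_0) = f_1 + f_2$, each $f_i$ is either supported near the far end of $Z_i$ (small $\rho_i$) or inherits the exponential decay of $P_i - P_0$; Lemma \ref{lem:boundfi} gives $\|f_i\|_{L^p_\delta} \leq C T^d \|f\|_{L^p}$ with the \emph{positive} weight $+\delta$. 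Only then does the weighted analysis of Proposition \ref{prop:lmeps} on each $Z_i$ produce corrections that are exponentially small after cutoff at the neck, and the bookkeeping via the pairing of \S\ref{subsection:polysol} and the characteristic system \eqref{eq:linsys} --- the part of the argument your sketch describes accurately --- becomes viable only after this localisation. Your identification of the single-root hypothesis as the mechanism that keeps the characteristic system polynomially controlled is correct, but it sits downstream of the gap: the cylinder solve is not a technical convenience but the step that converts the global $L^p$ datum into exponentially localised data on the building blocks.
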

  
\paragraph{} \label{par:spectral} In some cases, we are also able to determine the optimal exponent $\beta$. This is for instance the case of the Laplacian operator $\Delta_T$ of the metric $g_T$. The Laplacian acting on $q$-forms on $\R \times X$ has no real roots when $b^{q-1}(X) + b^q(X) = 0$ and admits $0$ as unique real root when $b^{q-1}(X) + b^q(X) > 0$. If $b^{q-1}(X) + b^q(X) = 0$, it follows from the results of \cite{kovalev2000gluing} that the norm of the inverse of $\Delta_T$ orthogonally to the space of harmonic $q$-forms is bounded independently of $T$. When $b^{q-1}(X) + b^q(X) > 0$, we will see that $\beta = 2$ is optimal in Theorem \ref{thm:density} and the substitute kernel gives a good approximation of the space of harmonic forms (see Corollary \ref{cor:laplacian}). 

If we consider the $L^2$-range, this means that the behaviour of the low eigenvalues of $\Delta_T$ depends on the topology of the cross-section $X$. When $b^{q-1}(X) + b^q(X) = 0$ the lowest nonzero eigenvalue of $\Delta_T$ acting on $q$-forms is uniformly bounded below as $T \rightarrow \infty$. On the other hand, if $b^{q-1}(X) + b^q(X) > 0$ then the first eigenvalue satisfies a bound of the type $\lambda_1(T) \geq \frac{C}{T^2}$ for some constant $C > 0$. It is an interesting problem to determine the distribution the eigenvalues that have the fastest decay rate. Let us define the densities of low eigenvalues as:
\begin{align*}      
    \Lambda_{q, \inf}(s) & = \liminf_{T \rightarrow \infty} \# \left \{ \text{eigenvalues of} ~ \Delta_T ~\text{acting on} ~ q \text{-forms in} ~ \left (0, \pi^2 sT^{-2} \right] \right\} \\
    \Lambda_{q, \sup}(s) & = \limsup_{T \rightarrow \infty} \# \left \{ \text{eigenvalues of} ~ \Delta_T ~\text{acting on} ~ q \text{-forms in} ~ \left (0, \pi^2 sT^{-2} \right] \right\}
\end{align*}
where we count eigenvalues with multiplicity. The normalisation by $T^{-2}$ comes from the fact that we expect the lowest eigenvalues to be decaying at precisely this rate, whilst the factor $\pi^2$ is just a matter of convenience. We can similarly define the densities $\Lambda^*_{q,\inf}(s)$ and $\Lambda^*_{q,\sup}(s)$ of low eigenvalues of the Laplacian acting on co-exact $q$-forms.  We are interested in understanding the asymptotic behavior of these densities as $s \rightarrow \infty$. In \S\ref{subsection:spectrum} we prove the following:

\begin{thm}     \label{thm:density}
    If $b^{q-1}(X) + b^q(X) > 0$, then the densities of low eigenvalues satisfy:
    \begin{equation*}
       \Lambda_{q, \sup}(s) =  \Lambda_{q, \inf}(s) + O(1) =  2 (b^{q-1}(X) + b^q(X)) \sqrt{s} + O(1) .
    \end{equation*}
    as $s \rightarrow \infty$. If moreover $b^q(X) > 0$ then:
    \begin{equation*}
        \Lambda^*_{q, \sup}(s) = \Lambda^*_{q, \inf}(s) + O(1) =  2 b^q(X) \sqrt{s} + O(1).
    \end{equation*}
\end{thm}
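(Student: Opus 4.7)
The plan is to reduce the eigenvalue counting problem to an effective one-dimensional Schrödinger-type problem on the neck $[-T,T] \times X$, taking values in the space $\mathcal{H}^{q-1}(X) \oplus \mathcal{H}^q(X)$ of harmonic forms on the cross-section. The crucial point is that for an eigenform with eigenvalue $\lambda = O(T^{-2})$, only the $\ker \Delta_X$-valued modes can oscillate on the long length scale of the neck: any component orthogonal to harmonic forms on $X$ is ``massive'', with a uniform spectral gap coming from $\Delta_X$, and therefore decays exponentially on scales of order $1$ on the neck. Consequently, up to exponentially small corrections, such an eigenform restricts on the neck to a $\mathcal{H}^{q-1}(X) \oplus \mathcal{H}^q(X)$-valued function $u(t)$ satisfying $-u''(t) = \lambda u(t)$ with self-adjoint boundary conditions at $t = \pm T$, determined asymptotically by the linear maps $\kappa_i : \mK_i \rightarrow \mE$ associated with the EAC pieces.

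For the lower bound on $\Lambda_{q,\inf}(s)$, I would use Courant--Fischer min-max. For each harmonic form $\alpha$ on $X$ of degree $q$ or $q-1$ and each integer $n$ with $1 \leq n \leq 2\sqrt{s}$, construct a trial form $\omega_{\alpha,n}$ of the shape $f_n(t)\,\alpha$ or $f_n(t)\,dt\wedge \alpha$ on the neck, with $f_n$ a sinusoidal function (e.g.\ $\sin(n\pi(t+T)/(2T))$), smoothly extended by an $O(e^{-\delta T})$ correction on $Z_1, Z_2$ obtained by applying Theorem \ref{thm:main} to the remainder. Each such form has Rayleigh quotient bounded by $(n\pi/(2T))^2 + O(e^{-\delta T}) \leq \pi^2 s/T^2$, and the family is approximately orthonormal for large $T$, producing at least $2(b^{q-1}(X) + b^q(X))\sqrt{s} + O(1)$ eigenvalues in $(0, \pi^2 s T^{-2}]$. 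For the matching upper bound on $\Lambda_{q,\sup}(s)$, I would show that any eigenform with eigenvalue at most $\pi^2 s /T^2$ is $L^2$-close to its orthogonal projection on the linear span of these trial forms. Precisely, decompose the eigenform on the neck into its harmonic-mode and massive-mode parts, use the spectral gap of $\Delta_X$ to bound the massive part by $O(\lambda)$ times the $L^2$-norm, and reduce the problem to counting eigenvalues of an ODE system on $[-T,T]$ of rank $2(b^{q-1}(X) + b^q(X))$. For any self-adjoint boundary condition, the standard one-dimensional Weyl law yields $N(\lambda) = \frac{2T}{\pi}\sqrt{\lambda} \cdot \dim(\mathcal{H}^{q-1}(X) \oplus \mathcal{H}^q(X)) + O(1)$, which at $\lambda = \pi^2 s/T^2$ gives the announced $2(b^{q-1}+b^q)\sqrt{s} + O(1)$.

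For the co-exact statement, I would invoke the Hodge decomposition on the compact manifold $M_T$: every positive-eigenvalue eigenform is either exact or co-exact, and the exact $q$-form eigenspace is identified by $d$ with the co-exact $(q-1)$-form eigenspace at the same eigenvalue. This yields the relation $\Lambda_q = \Lambda^*_q + \Lambda^*_{q-1}$, and induction on $q$ (with $\Lambda^*_{-1} \equiv 0$) together with the telescoping identity $\sum_{i=0}^q (-1)^{q-i}(b^{i-1}(X) + b^i(X)) = b^q(X)$ produces $\Lambda^*_q = 2 b^q(X)\sqrt{s} + O(1)$. The main obstacle is making the reduction to the effective one-dimensional problem genuinely rigorous and uniform in $T$: one must show that the effective self-adjoint boundary conditions induced by $Z_1, Z_2$ only alter the eigenvalue count by $O(1)$, which rests on the finiteness of $\dim \mK_i$ and $\dim \mK_i^*$ together with the sharp resolvent estimate of Theorem \ref{thm:main} with exponent $\beta = 2$ announced in \S\ref{par:spectral}, allowing precise control of the interaction between the neck and the EAC ends.
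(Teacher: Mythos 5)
Your overall plan captures the correct physical picture — the low eigenvalues come from $\ker\Delta_X$-valued oscillations on the long neck, and non-harmonic cross-sectional modes are suppressed — but both halves of your argument have concrete problems.

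\textbf{Lower bound.} The trial functions $f_n(t)=\sin(n\pi(t+T)/(2T))$ vanish at $t=\pm T$ but their first derivatives do not, so $f_n\alpha$ (extended by zero) is not in $W^{2,2}$: there is a jump in the first derivative, hence a delta-like contribution to $\Delta_T$. A smooth extension onto $Z_1,Z_2$ that kills $f_n'(\pm T)\sim n/T$ contributes $\|\Delta_T(\text{extension})\|_{L^2}^2 \sim n^2/T^2$, to be compared with the bulk $\int_{-T}^T|f_n''|^2 \sim n^4/T^3$. For $T\gg n^2$ the extension error dominates, and the resulting Rayleigh quotient is of order $n/T^{3/2}$, which is \emph{not} $\lesssim s/T^2$ uniformly as $T\rightarrow\infty$. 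Nor can you salvage this by ``applying Theorem~\ref{thm:main} to the remainder'': the remainder has $L^2$-norm of order $n/T$ (not exponentially small), and the inverse has operator norm $O(T^2)$, so the correction is of order $nT$, which is enormous. The paper avoids all of this by choosing the space $V_n$ so that both $f$ and $f'$ vanish at $\pm 1$; then the rescaled trial forms are genuinely compactly supported in the neck, extension by zero is exact, and the only error is the exponentially small discrepancy between $\Delta_T$ and the translation-invariant $\Delta_0$.

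\textbf{Upper bound.} Your proposal — establish exponential decay of the massive modes, derive effective self-adjoint boundary conditions at $t=\pm T$ from the EAC ends via $\kappa_i$, and then invoke a one-dimensional Weyl law — is a genuinely different strategy from the paper's, and you acknowledge that making it rigorous is the main obstacle. The paper never attempts to characterise eigenforms or effective boundary conditions at all. Instead it works directly with the Green's function $G_T$ through a min-max characterisation of $\lambda_{n+1}^{-1}(T)$: one imposes finitely many explicit orthogonality conditions (encoded in the codimension-$(2n+3)$ spaces $V_n'\subset E$, plus $N$ extra conditions absorbing the characteristic system of \S\ref{subsection:characteristic}) and then uses the explicit formula for $Q_0f_0$ from Theorem~\ref{thm:localsol} to bound $\|G_T\eta\|_{L^2}\leq T^2\lambda^{-1}\|\eta\|_{L^2}$ on the resulting subspace. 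This is a test-function argument that never needs to control eigenforms or boundary conditions; it converts the difficulty you flag (the $O(1)$ interaction with the ends) into a finite, $T$-uniform number of extra orthogonality conditions. Your route could in principle work, but it requires substantial additional analysis (uniform decay estimates for massive modes, a well-posed effective boundary value problem varying with $T$, and a Weyl law with constant uniform in $T$ and in the boundary condition), none of which you supply.

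\textbf{Co-exact part.} Your inductive argument using $\Lambda_q = \Lambda^*_q + \Lambda^e_q$, the identification $\Lambda^e_q=\Lambda^*_{q-1}$ via $d$, and the telescoping of Betti numbers is a valid alternative to the paper's route (explicit exact trial forms plus Hodge duality), provided one is careful with the $\limsup$/$\liminf$ bookkeeping when subtracting densities; this is a fine variation.
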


This theorem essentially says that the lowest eigenvalues of $\Delta_T$ are asymptotically distributed as the low eigenvalues of the Laplacian acting on the product $S^1_{2T} \times X$, where the first factor is a circle of length $2T$. We shall moreover see in \ref{par:stronger} that stronger statements relating the lower spectrum of $\Delta_T$ to the spectrum of the Laplacian on $S^1_{2T} \times X$ would not hold. The reason for this is that the interaction between the building blocks of the construction creates a shift in the spectrum of $\Delta_T$, compared with the spectrum of the Laplacian on the product. We shall also briefly discuss possible generalisations of the above statement to other self-adjoint operators.

\paragraph{} As a second application of Theorem \ref{thm:main} and Corollary \ref{cor:laplacian}, we shall give improved estimates for compact $G_2$-manifolds constructed by twisted connected sum in Section \ref{section:compact}, thereby addressing some analytical issues in the literature. The building blocks of the twisted connected sum construction are a pair of EAC $G_2$ manifolds $(Z_1,\varphi_1)$ and $(Z_2,\varphi_2)$ with rate $\mu > 0$, satisfying a certain matching condition (see \cite{kovalev2003twisted, corti2015g}). These building blocks can be glued along a non-trivial isometry $\gamma$ of the cross-section $X$ (see Remark \ref{rem:twist}) to form a $1$-parameter family of compact manifolds $M_T$, equipped with closed $G_2$-structures $\varphi_T$ with exponentially decaying torsion (see \S\ref{subsection:tcs} for more details). For large enough $T$, $\varphi_T$ can be deformed to a nearby torsion-free $G_2$-structure $\widetilde \varphi_T$ with $\| \widetilde \varphi_T - \varphi_T \|_{C^0} = O(e^{-\delta T})$ \cite[Theorem 5.34]{kovalev2003twisted}. Using Theorem \ref{thm:main} we prove that stronger estimates hold:

\begin{cor}     \label{cor:tcsestimates}
    Let $k \in \N$ and let $\delta > 0$ be smaller than $\mu$ and the square roots of the smallest non-zero eigenvalues of the Laplacian acting on $2$-and $3$-forms on $X$. Then there exists a constant $C > 0$ such that for $T$ large enough the following estimates hold:
    \begin{equation*}
        \| \widetilde \varphi_T - \varphi_T \|_{C^k} \leq C e^{-\delta T} .
    \end{equation*}
\end{cor}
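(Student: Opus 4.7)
The argument is an application of Theorem \ref{thm:main} to sharpen the decay rate obtained by Kovalev in \cite{kovalev2003twisted}, whose estimate is weaker because only a crude control on the norm of the inverse of the relevant Laplacian was available. Following the standard perturbation method, I would write $\widetilde \varphi_T = \varphi_T + d\eta_T$ for a 2-form $\eta_T$. Then $\widetilde \varphi_T$ is automatically closed, and the torsion-free condition $d \widetilde \psi_T = 0$ (where $\widetilde \psi_T = *_{\widetilde \varphi_T} \widetilde \varphi_T$) reduces to a non-linear equation of the schematic form
\begin{equation*}
    L_T \eta_T = -d^*_T \psi_T + N_T(\nabla \eta_T),
\end{equation*}
with $L_T$ a lower-order adjustment of the Hodge Laplacian on 2-forms, $\psi_T = *_T \varphi_T$, and $N_T$ at least quadratic in its argument. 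Since each building block is torsion-free and the cutoffs defining $\varphi_T$ are performed where the EAC decay is already of order $e^{-\mu T}$, the source satisfies $\| d^*_T \psi_T \|_{C^k} \leq C e^{-\mu T}$ for all $k$.

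Next I would apply the H\"older version of Theorem \ref{thm:main} to $L_T$. Its indicial operator is a perturbation of the Hodge Laplacian on 2-forms on the cylinder $\R \times X$, whose unique real root is $0$ (corresponding to harmonic $1$- and $2$-forms on $X$, amply non-trivial since the cross-section of a twisted connected sum is typically a $K3$ surface). Theorem \ref{thm:main} then yields an inverse for $L_T$ modulo the substitute cokernel $\mK^*_T$ with operator norm $O(T^\beta)$ for some $\beta \geq 0$. A key step is to verify that $d^*_T \psi_T$ pairs in $L^2$ with elements of $\mK^*_T$ in a way that is exponentially small of order $e^{-\delta T}$. Elements of $\mK^*_T$ are matched approximate harmonic 2-forms whose asymptotic models on the neck are polyhomogeneous solutions of rate $0$; by Corollary \ref{cor:laplacian}, these approximate genuine harmonic 2-forms on $M_T$ to within an error of size $e^{-\sqrt{\lambda_1^{(2)}} T}$, where $\lambda_1^{(2)}$ is the smallest non-zero eigenvalue of the Hodge Laplacian on 2-forms on $X$. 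Combining this with the $G_2$ matching condition for $\varphi_1, \varphi_2$ (both asymptotic to the same translation-invariant Calabi--Yau structure on $\R \times X$) and integrating by parts should provide the required projection bound. The analogous analysis for the formal adjoint problem, which involves the Laplacian on 3-forms, accounts for the $\sqrt{\lambda_1^{(3)}}$ condition.

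Once the obstruction is absorbed, the inhomogeneous linear equation admits a solution $\eta_T^{(0)}$ of size $\| \eta_T^{(0)} \|_{C^{k+2,\alpha}} \leq C T^\beta e^{-\mu T} \leq C' e^{-\delta T}$ for any $\delta < \mu$. Since $N_T$ is at least quadratic in $\nabla \eta$, a Banach fixed-point iteration in a small ball of $C^{k+2,\alpha}$ converges and produces a genuine solution $\eta_T$ with the same bound. Taking the exterior derivative and using elliptic regularity yields the stated $C^k$ estimate for $\widetilde \varphi_T - \varphi_T = d \eta_T$. I expect the main obstacle to be the projection estimate of the second paragraph: translating the matching condition for the building blocks' $G_2$-structures into $L^2$-quasi-orthogonality of the torsion with the substitute cokernel, and tracking how the resulting decay rate is precisely governed by the cross-sectional spectral gaps $\sqrt{\lambda_1^{(2)}}$ and $\sqrt{\lambda_1^{(3)}}$.
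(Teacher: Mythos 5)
Your overall framework — linearise, control the source, invert the linearised operator, close with a fixed-point argument — matches the paper's, but the step you single out as "the main obstacle" is a red herring, and a genuinely essential ingredient is glossed over.

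On the first point: you propose to invert the linearised operator modulo the substitute cokernel $\mK^*_T$ via Theorem \ref{thm:main}, and then to show by a matching/integration-by-parts argument that the source pairs exponentially small with $\mK^*_T$. This is unnecessary and obscures the structure. The source term $*d\psi_T$ is co-exact, and the image of the operator $P_T\eta = \Delta_T\eta + \tfrac13 *d(\langle d\eta,\varphi_T\rangle\psi_T)$ lies in the image of $*d$, so both are automatically $L^2$-orthogonal to the space of \emph{genuine} harmonic $2$-forms $\mH^2(M_T)$. Hence one should invert $P_T$ on $\mH^2(M_T)^\perp$ directly using Corollary \ref{cor:laplacian}, which gives $\|Q_T\|\leq CT^2$ — no substitute-cokernel bookkeeping is needed. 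Consequently, your explanation of where the eigenvalue conditions on $X$ come from is also off: they do not arise from a projection bound onto $\mK^*_T$. They enter through the estimate $\|\psi_T\|_{C^k} = O(e^{-\delta T})$ for $\delta<\sigma$, where $\sigma$ is the minimum of $\mu$ and the spectral gaps, because the cutoff forms $\varphi_{i,T}=\varphi_i-d(\chi(\rho_i-T-1)\eta_i)$ (and similarly for $\Theta_{i,T}$) are built from exact correction terms $\eta_i,\xi_i$ whose decay rate is governed by the indicial roots of the Laplacian on the cross-section as well as by $\mu$. Your claim $\|d^*_T\psi_T\|_{C^k}\leq Ce^{-\mu T}$ is therefore imprecise: the correct rate is $e^{-\delta T}$ for $\delta<\sigma$, which may be strictly weaker than $e^{-\mu T}$.

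On the second point: writing "since $N_T$ is at least quadratic in $\nabla\eta$, a Banach fixed-point iteration converges" is not sufficient to conclude a $C^k$ estimate for $k\geq 1$. One needs the quadratic estimates to hold in $W^{k,p}$ (or $C^{k,\alpha}$) with constants \emph{uniform in $T$}. The standard reference, Joyce's Proposition 10.3.5, only provides the $C^0$ version, and deriving the higher-order version is genuinely non-trivial: in the paper this is Proposition \ref{prop:quadratic}, whose proof occupies \S\ref{subsection:quadratic} and relies on the $GL_+(7,\R)$-equivariance of $\Theta$ and on passing to a $\varphi_T$-compatible connection $\widetilde\nabla_T=\nabla_T-\tau(\varphi_T)$ (together with a uniform bound on the torsion). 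Without this, the iteration in $W^{2+k,p}$ is not justified, and you would only recover Kovalev's original $C^{0,\alpha}$ estimate rather than the $C^k$ estimate claimed.

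Once both points are corrected, the scheme you outline — bound on the source of order $e^{-\delta T}$, inverse of order $T^2$, quadratic estimate, fixed point in a ball of radius $\epsilon/T^2$, and Sobolev embedding — reproduces the paper's proof.
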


This result gives a much stronger control on $\| \widetilde \varphi_T - \varphi_T \|$ than the $C^{0,\alpha}$ bound that was the best previously known. It has been used in various places in the literature but to our knowledge there was still no clear proof of this fact. These stronger estimates are especially important to do analysis on twisted connected sums, as they also imply a control on $g_{\widetilde \varphi_T} - g_{\varphi_T}$ and all its derivatives in $O(e^{-\delta T})$, and similarly for the operators that are naturally associated with it. Hence the study of the mapping properties of elliptic operators on twisted connected sums is amenable to the techniques described in Sections \ref{section:construction} and \ref{section:spectral}, and in particular Theorem \ref{thm:main} and Theorem \ref{thm:density} also apply in this context. 

\paragraph{}    \label{par:relsdc} Let us explain how our results can be related to the swampland distance conjectures in physics. Consider a family of compact $G_2$-manifolds $(M_T, \widetilde \varphi_T)$ constructed by twisted connected sum. The cross section $X$ is the product of a K3 surface with a flat $2$-torus. Therefore, its Betti numbers are $b^0(X) = 1$, $b^1(X)= 2$, $b^2(X) = 23$ and $b^3(X) = 44$. By Theorem \ref{thm:density} and Corollary \ref{cor:tcsestimates}, we understand the asymptotic behaviour of the masses of the light Kaluza--Klein states in the limit $T \rightarrow \infty$. Indeed, the decay rate and the distribution of the low eigenvalues of the Laplacian on $(M_T, \widetilde \varphi_T)$ are equivalent to those of the Laplacian on $S^1_{2T} \times T^2 \times K3$. This may have an interpretation in terms of dualities in physics.

Quantitatively, we may deduce that the lower spectrum of the Laplacian of $\widetilde \varphi_T$ acting on co-closed $3$-forms is made of an infinite number of eigenvalues which decay at rate $T^{-2}$. Moreover, for large enough $T$ the number of eigenvalues below $\frac{\pi^2 s}{T^2}$ is of order $88 \sqrt s$. From the physical perspective, when we consider M-theory compactified on $(M_T,\widetilde \varphi_T)$ this translates into an infinite tower of scalar fields becoming light as $T \rightarrow \infty$, in line with the swampland distance conjectures. By the above analysis, we obtain not only the rate of decay but also the density of this tower of states. Considering $0$, $1$ and $2$-forms we would similarly obtain other towers of Kaluza--Klein states becoming light. The geometric origin of these towers of light states is found to be the topology of the cross-section, through the roots of the Laplacian acting in the cylindrical neck region. 

\paragraph{}    \label{par:strategy} Let us finish this section with an overview of our strategy. We prove Theorem \ref{thm:main} by an explicit construction method, similar to the constructions by Kapouleas of minimal surfaces in Euclidean space \cite{kapouleas1990complete,kapouleas1991compact}, by which we were inspired. The idea is to use cutoffs separate the analysis in three different domains: the neck region, which is close to a finite cylinder $[-T, T] \times X$, and two regions isometric to the domains $\{ \rho_i \leq T+1 \} \subset Z_i$. One challenge is that when the indicial operator $P_0$ acting on the cylinder has real roots, it is not invertible nor even Fredholm in the Sobolev range that we would like to consider. However, this failure is due to the asymptotic behaviour of solutions, and we only need to work on a compact region of the cylinder. To deal with this issue, let us denote by $W^{l,p}_c$ the subspace of $W^{l,p}$ constituted by sections with compact essential support. In Section \ref{section:pdecyl} we prove the following theorem, which is the main analytical ingredient of our construction:

\begin{thm}		\label{thm:localsol}
  Let $P_0 : C^\infty(\underline E_0) \rightarrow C^\infty(\underline F_0)$ be an elliptic translation-invariant operator of order $k$ acting on the cylinder $\R \times X$. Let $d$ be the maximal order of a real root of $P_0$.
  
  For $p > 1$, there exists a map $Q_0 : L^p_c(\underline F_0) \rightarrow W^{k,p}_{loc}(\underline E_0)$ such that for any $f \in L^p(\underline F_0)$ with compact essential support, $P_0 Q_0 f = f$. Moreover there exists a constant $C > 0$ such that for any $T \geq 1$ and any $f \in L^p_c(\underline F_0)$ with essential support contained in $(-T,T) \times X$:
  \begin{equation*}
      \| Q_0 f \|_{W^{k,p}((-T,T) \times X)} \leq C T^d \|f\|_{L^p}.
  \end{equation*}
\end{thm}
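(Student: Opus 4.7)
The plan is to prove Theorem \ref{thm:localsol} by Fourier analysis along the cylindrical direction, combined with a careful contour deformation that isolates the contributions of the real roots. Since $f \in L^p$ has compact essential support, its partial Fourier transform $\hat f(\xi,x) = \int_{\R} e^{-it\xi} f(t,x)\,dt$ extends to an entire function of $\xi \in \C$ with values in $L^p(X;F_0)$, with explicit polynomial bounds on the growth in $\re \xi$ and exponential control in $\im \xi$ dictated by the support of $f$. Writing $P_0 = \sum_{j=0}^k A_{k-j} \partial_t^j$ and replacing $\partial_t$ by $i\xi$, we obtain a polynomial family $P_0(\xi) : C^\infty(E_0) \to C^\infty(F_0)$ of elliptic operators on the cross-section $X$, depending holomorphically on $\xi$. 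Standard elliptic theory for this family (reviewed in \S\ref{subsection:sepvar}) gives that $P_0(\xi)^{-1}$ exists as a meromorphic family on $\C$, with poles located precisely at the indicial roots; at each real root $\lambda_j$, the pole has order equal to the order $d_j$ of that root as an indicial root, so $\max_j d_j = d$.

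The construction of $Q_0$ proceeds by writing, for $f \in L^p_c$,
\begin{equation*}
    Q_0 f(t,x) = \frac{1}{4\pi} \int_{\Gamma_+} e^{it\xi} P_0(\xi)^{-1} \hat f(\xi,x)\,d\xi + \frac{1}{4\pi} \int_{\Gamma_-} e^{it\xi} P_0(\xi)^{-1} \hat f(\xi,x)\,d\xi,
\end{equation*}
where $\Gamma_\pm$ are horizontal contours $\R \pm i\epsilon$ for some small $\epsilon > 0$ lying below the first non-real indicial roots. Both contour integrals produce right inverses of $P_0$ by construction, and their arithmetic mean is the natural symmetric right inverse; it differs from the one-sided inverses only by finite combinations of polyhomogeneous solutions in $\mE$. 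The equivalent representation obtained by joining $\Gamma_+$ and $\Gamma_-$ through small half-circles around each real root $\lambda_j$ will be the workhorse of the estimate: deforming to the real axis produces a smooth bulk contour integral plus half the sum of residues at each $\lambda_j$.

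The bulk integral is controlled by Fourier multiplier theory. Along $\Gamma_\pm$ the operator-valued symbol $P_0(\xi)^{-1}$ is holomorphic, and combined with $X$-ellipticity it satisfies Mikhlin--Hörmander-type estimates $\|\partial_\xi^m P_0(\xi)^{-1}\|_{W^{k,p}(X) \to W^{k,p}(X)} \leq C_m (1+|\xi|)^{-k-m}$, uniformly in $\re \xi$. This gives a uniform bound
\begin{equation*}
    \|\text{bulk part of } Q_0 f\|_{W^{k,p}(\R \times X)} \leq C \|f\|_{L^p},
\end{equation*}
and the analogous Schauder estimate in the Hölder range. The residue at $\lambda_j$ contributes a finite sum of terms of the form $c_l \, t^{l-1} e^{i\lambda_j t} A_{j,l} \hat f(\lambda_j,x)$ for $1 \leq l \leq d_j$, where $A_{j,l}$ are fixed bounded operators extracted from the Laurent expansion of $P_0(\xi)^{-1}$ at $\lambda_j$. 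Since the essential support of $f$ lies in $[-T,T] \times X$, Hölder's inequality gives $\|\hat f(\lambda_j,\cdot)\|_{L^p(X)} \leq (2T)^{1/p'} \|f\|_{L^p}$, while $\|t^{l-1}\|_{L^p([-T,T])} \leq C T^{l-1+1/p}$. Multiplying yields the desired bound $C T^l \|f\|_{L^p} \leq C T^d \|f\|_{L^p}$ on $[-T,T]\times X$, and differentiating in $t$ only lowers the polynomial degree by one at the cost of a bounded constant, so the full $W^{k,p}$ estimate on $[-T,T]\times X$ is controlled by the same power $T^d$.

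The main obstacle will be establishing the uniform bulk estimate for general $p \neq 2$ and in the H\"older range. For $p = 2$ it is immediate by Plancherel, but for general $p$ one needs the operator-valued Mikhlin--H\"ormander multiplier theorem applied to the symbol $P_0(\xi)^{-1}$ seen as a map into $W^{k,p}(X;E_0)$. The requisite derivative estimates on the symbol follow from the polynomial structure of $P_0(\xi)$ combined with elliptic regularity on $X$, but they require that $P_0(\xi)$ be uniformly invertible for $|\im \xi| = \epsilon$ and $|\re \xi|$ large, which in turn uses the discreteness of the indicial roots and the fact that along $\Gamma_\pm$ the ellipticity of $P_0(\xi)$ is controlled by $|\xi|$. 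Once this is set up, the Hölder case reduces to a parallel Calder\'on--Zygmund / Schauder argument for the associated convolution operator, and the final assembly of bulk plus residue contributions yields the stated estimate with constant $C$ independent of $T$.
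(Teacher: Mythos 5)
Your overall strategy—split the resolvent near the real axis into a well-behaved ``bulk'' part and singular contributions at the real roots, then estimate the two separately—is the same as the paper's, but two of your key steps do not go through as stated.

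First, the contour picture breaks down for roots of order $d \geq 2$. You propose deforming $\Gamma_{\pm}$ down/up to the real axis past small semicircles, producing ``a smooth bulk contour integral plus half the sum of residues.'' For a simple pole this is fine (and the $\pm i\pi\,\text{Res}$ contributions actually \emph{cancel} in the average, so one lands on a principal value); but for a pole of order $l\geq 2$ the semicircle integrals diverge as the radius shrinks: the leading term is $\sim -2g(\lambda_0)/r$, with the \emph{same} sign from both $\Gamma_{+}$ and $\Gamma_{-}$, so the divergences do not cancel in the average, and a symmetric principal value of $(\xi-\lambda_j)^{-l}$ simply does not exist as a distribution for $l \geq 2$. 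One can of course regularize (Hadamard finite part), but then the ``bulk + residue'' splitting requires care you have not provided, and the residue terms are not just of the form $t^{l-1}e^{i\lambda_j t}A_{j,l}\hat f(\lambda_j,x)$: for an order-$d_j$ pole they involve $t^a\,\hat f^{(b)}(\lambda_j,\cdot)$ for $a+b \le d_j-1$ (your power count $T^{a+b+1}\leq T^{d}$ still works, but the expression you write down is incomplete). The paper avoids contour deformation entirely: it writes $R(\lambda) = R_r(\lambda) + R_s(\lambda)$ with $R_s$ the sum of the principal parts, and defines the singular part $u_s$ directly by one-sided convolutions $u_s = \sum H_{l,\lambda_j} * (R_{-l}(\lambda_j) f)$ with $H_{l,\lambda_j}(t)= e^{i\lambda_j t}\frac{i^l t^{l-1}}{(l-1)!}H(t)$. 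This gives a concrete formula whose Fourier-side identity is verified by hand via relations \eqref{eq:prl}, and whose $L^p$ bound on $[-T,T]\times X$ follows immediately from Young's inequality with the $L^1$-norm of the truncated kernel, which is $O(T^d)$.

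Second, and more seriously, the bulk $L^p$ bound is not established. You flag this yourself as ``the main obstacle'' and propose the operator-valued Mikhlin--H\"ormander theorem. In the Banach-valued setting (here $B = L^p(X;F_0)$, a UMD space), that theorem requires \emph{R-boundedness} of $\{m(\xi)\}\cup\{\xi m'(\xi)\}$, not merely the uniform operator-norm bounds you quote; R-boundedness coincides with norm boundedness only when $B$ is a Hilbert space, i.e.\ $p=2$. So for $p\neq 2$ your argument has a genuine unfilled hole. The paper uses a much more elementary route that completely sidesteps multiplier theory: Agmon's resolvent estimates (Theorem \ref{thm:boundres}) give $\|\lambda R_r(\lambda)\| + \|\lambda^2 R_r'(\lambda)\| \le C$ along $\R$, hence $R_r$ and $R_r'$ are $L^2$ as operator-valued functions; by Plancherel $Q_r(t)$ and $tQ_r(t)$ are $L^2$ in $t$; by Cauchy--Schwarz with $1/(1+|t|)$, $Q_r$ is $L^1$; and then the generalised Young inequality (Lemma \ref{lem:youngsineq}) yields $\|Q_r*f\|_{L^p} \le C\|f\|_{L^p}$. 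I would strongly suggest replacing your multiplier-theory step by this ``$L^2$-on-the-symbol-side $\Rightarrow$ $L^1$-kernel $\Rightarrow$ Young'' argument; it is simpler, applies uniformly in $p>1$ and in the H\"older range, and is exactly what the explicit asymptotic information needed in Section~\ref{section:construction} (the polyhomogeneous limit $u_f$ and Lemma \ref{lem:pairing}) is extracted from.
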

  
\begin{rem}
  The existence of the map $Q_0$ can be deduced from standard results as \cite{lockhart1985elliptic} or \cite{melrose1993atiyah} for instance. However, the explicit expression that we give for $Q_0$ will be important for our purpose, since a precise understanding of the asymptotic behaviour of $Q_0f$ will play a key role in the construction of Section \ref{section:construction}. 
\end{rem}

With this theorem in hand, we can try to build approximate solutions of the equation $P_T u = f$ by first taking a cutoff $f_0$ of $f$ in the neck region, and considering an equation of the type $P_0 u_0 = f_0$. We can consider $f_0$ as a section of $\underline F_0$ supported in $[-T,T] \times X$. This equation can be solved using the above theorem. Taking cutoff $u_0$ of the solution, the equation $P_T u = f$ can be replaced with an equation of the form $P_T (u-u_0) = f'$, where $f'$ is appropriately small in the neck region. Thus $f'$ can be written as a sum $f_1 + f_2$, where each $f_i$ is a section of $F_i$ defined in the domain $\{ \rho_i \leq T + 1 \} \subset Z_i$ and satisfies good decay properties. This allows us to use weighted analysis to study the equations $P_i u_i = f_i$ in a range where the operators $P_i$ satisfy the Fredholm property. Similar ideas can be found for instance in \cite{ozuch2022noncollapsed}. 

Unfortunately, there are generally obstructions to solving $P_i u_i = f_i$ in weighted spaces, and the main difficulty is to understand how these obstructions interact. Using a pairing defined in \S\ref{subsection:polysol}, we can keep track of the obstructions and express their vanishing (up to an exponentially decaying error term) as a finite-dimensional linear system, which we call the \emph{characteristic system} of our gluing problem. The unknown of this system is an element $v \in \mE$ which represents our degrees of freedom in solving the equation $P_0 u = f_0$. The coefficients of the system are linearly determined by $f$. In \S\ref{subsection:construction}, we prove that in full generality the characteristic system admits a solution if and only if $f$ is orthogonal to the substitute cokernel. With the extra assumption that $P$ has only one root, this allows us to build an approximate solution of the equation $P_Tu=f$, and when $T$ is large enough we can prove Theorem \ref{thm:main} using an iteration process. However, our method could apply more generally, as long as one can ensure that the characteristic system admits a solution with reasonable bounds.

\section{Translation-invariant elliptic PDEs on cylinders}  	\label{section:pdecyl}

Throughout this section, we fix a compact oriented manifold $X$ and let $Y = \R \times X$. If $E \rightarrow X$ is a vector bundle, we denote by $\underline E \rightarrow Y$ the pull-back of $E$ by the projection $Y \rightarrow X$ on the second factor. Given any connection $\nabla$ on $E$, we can endow $\underline E$ with the pull-back connection $\underline \nabla$. Parallel transport along the vector field $\frac{\del}{\del_t}$ naturally defines a translation operator on $\underline E$. A section of $\underline E$ is translation-invariant if and only if it is the pull-back of a section of $E$. We assume that $Y$ is equipped with a cylindrical metric $g_Y = dt^2 + g_X$ and endow $\underline E$ with a translation-invariant metric. Sobolev and $C^k$ norms on $Y$ are defined with respect to this data. 

In \S\ref{subsection:sepvar}, we introduce some background about analysis on cylinders. In \S\ref{subsection:polysol} we study the action of a general elliptic translation-invariant operator $P$ on polyhomogeneous sections and define a pairing between the spaces of polyhomogeneous solutions of $Pu = 0$ and of $P^* v = 0$. Last, we prove Theorem \ref{thm:localsol} in \S\ref{subsection:localex} by constructing explicit solutions of the equation $Pu = f$.  Using the above pairing, we can precisely analyse the asymptotic behaviour of these solutions, which will play a key role in Section \ref{section:construction}.

    \subsection{Analysis on cylinders by separation of variables}       \label{subsection:sepvar}

\paragraph{} On the cylinder $Y = \R \times X$, we have natural isomorphisms identifying $L^p(\underline E)$ with $L^p(\R,L^p(E))$ for any $p \geq 1$, which follow from Fubini's theorem. Therefore we can think of sections of translation-invariant vector bundles over $Y$ as maps from $\R$ to an appropriate Banach space of sections over $X$. Moreover, for $p \geq 2$ there is a continuous embedding $L^p(E) \hookrightarrow L^2(E)$, which therefore induces a continuous embedding $L^p(\underline E) \rightarrow L^p(\R,L^2(E))$. Hence there exists a constant $C > 0$ such that for any $u \in L^p(\underline E)$:
\begin{equation*}
    \|u\|_{L^p(\R,L^2(E))} = \left( \int_\R \|u_t \|^p_{L^2} dt \right)^{\frac{1}{p}} \leq C \|u\|_{L^p}
\end{equation*}
where $u_t = \left. u \right|_{\{t\} \times X}$. The main tools that we will need for the analysis of PDEs on a cylinder $\R \times X$ are the Fourier transform and the convolution along the variable $t \in \R$. Below we recall some definitions.

Let $H$ be a complex Hilbert space, and consider functions $f : \R \rightarrow H$. Later, we will take $H$ to be the space $L^2(E)$. We denote by $\mathscr S(\R,H)$ the space of $H$-valued Schwartz functions, that is, the space of smooth functions taking values in $H$ that have all derivatives rapidly decaying at infinity. If $f \in \mathscr S(\R,H)$, we can define its Fourier transform $\hat f : \R \rightarrow H$ by:
\begin{equation}		\label{eq:defft}
    \hat f(\lambda) = \int_\R e^{-i \lambda t} f(t) dt
\end{equation}
for any $\lambda \in \R$. As in the case of scalar-valued functions, $\hat f$ also belongs to the space $\mathscr S(\R,H)$. This defines an invertible map $\mathscr S(\R,H) \rightarrow \mathscr S(\R,H)$, and the inverse Fourier transform takes the usual expression. As $H$ is a Hilbert space, the Plancherel theorem holds and the Fourier transform extends to a bounded linear map $L^2(\R,H) \rightarrow L^2(\R,H)$, which is, up to constant, an isometry \cite[Th. 2.47]{abels2011pseudodifferential}.

Just as for scalar-valued maps, the Plancherel theorem also implies that the Fourier transform can be extended to the dual space $\mathscr{S}^\prime(\R,H)$ of $\mathscr{S}(\R,H)$. In particular, for any $p \geq 1$ we can define the Fourier transform of an $L^p$-function through the embedding $L^p(\R,H) \rightarrow \mathscr{S}^\prime(\R,H)$. On $\mathscr{S}^\prime(\R,H)$ we can also define weak derivatives by duality, and moreover the relation $\widehat{f^\prime}(\lambda) = i\lambda \hat f(\lambda)$ can also be proved by evaluating against test functions.

Let us consider two complex Hilbert spaces $H_1,H_2$ and let $B(H_1,H_2)$ be the Banach space of bounded linear operators from $H_1$ to $H_2$. Let $R : \R \rightarrow B(H_1,H_2)$ be a smooth map, such that $R$ an all its derivatives have at most polynomial growth. Then $R$ induces a linear operator $A_R : \mathscr S(\R,H_1) \rightarrow \mathscr S(\R,H_2)$ acting on a Schwartz function $f$ by:
\begin{equation}
    A_R[f](t) = \frac{1}{2\pi} \int e^{i \lambda t} T(\lambda) \hat f(\lambda) d\lambda, ~~~ \forall t \in \R.
\end{equation}
The Plancherel theorem implies that if $T$ is bounded, then $A_R$ extends to a bounded linear operator $L^2(\R,H_1) \rightarrow L^2(\R,H_2)$. The Hilbert space valued Mikhlin multiplier theorem gives a sufficient condition for $A_R$ to extend as a bounded linear map for other $L^p$-spaces (see \cite[Th. 5.8]{abels2011pseudodifferential} or \cite[Th. 6.1.6]{bergh2012interpolation}):

\begin{thm}     \label{thm:multiplier}
    Assume that there exists a constant $C$ such that $\| R(\lambda)\| + \|\lambda R^\prime(\lambda) \| \leq C$ for all $\lambda \in \R$, where $\| \cdot \|$ denotes the norm of $B(H_1,H_2)$. Then $A_R$ extends as a bounded linear map $L^p(\R,H_1) \rightarrow L^p(\R,H_2)$ for all $1 < p < \infty$.
\end{thm}

In \S\ref{subsection:localex}, we will also need to consider functions $F : \R \rightarrow H$ defined by integrals of the form
\begin{equation*}
    F(t) = \int_{-\infty}^t \frac{(t-\tau)^{l-1}}{(l-1)!}  f(\tau) d\tau
\end{equation*}
where $l \geq 1$ and $f \in L^1_c(\R,H)$ is a compactly supported integrable function. The function $F$ is continuous, and since the support of $f$ is compact $F$ has at most polynomial growth at infinity, and therefore it defines an element of $\mathscr{S}^\prime(\R,H)$. If we denote the Heaviside step function by $H$ and define $H_l(t) = \frac{t^{l-1}}{(l-1)!}H(t)$, then $F$ can be written more compactly as the convolution $H_l * f$. Note that the $n$-th order weak derivative of $H_l$ is $H_{l-n}$ if $n < l$ and the Dirac mass $\delta$ if $l = n$. The weak derivatives of $F \in \mathscr{S}^\prime(\R,H)$ are naturally given by:
\begin{equation}        \label{eq:weakderf}
    F^{(n)} = 
    \begin{cases}
        H_{l-n} * f & \text{if} ~ l < n, \\
        f  &\text{if} ~ l = n, \\
        f^{(n-l)} & \text{if} ~ l > n .
    \end{cases}
\end{equation}
This can be proved by integrating against a test function $g \in \mathscr{S}(\R,H)$, as in the case of scalar-valued functions. 
   
\paragraph{} Let us now turn to the study of PDEs on cylinders. Let $\underline E$ and $\underline F$ be translation-invariant vector bundles over $Y = \R \times X$, equipped with translation-invariant metrics and connections. We will denote by $y = (t,x)$ the points in $Y$. Moreover let $\del_t$ be the covariant derivative along $\frac{\del}{\del_t}$, and define $D_t = -i\del_t$. 
 
A differential operator $P : C^\infty(\underline E) \rightarrow C^\infty(\underline F)$ of order $k$ is translation-invariant if it takes the form:
\begin{equation*}
    P(x,\del_x,D_t) = \sum_{l=0}^k A_{k-l}(x,\del_x) D_t^l
\end{equation*}
where $A_{k-l}(x,\del_x)$ are differential operators $C^\infty(E) \rightarrow C^\infty(F)$. If $P$ has order $k$, it is a standard fact that it induces continuous maps $P : W^{k+l,p}(\underline E) \rightarrow W^{l,p}(\underline F)$ on Sobolev spaces of sections.

From now on we assume that $P$ is elliptic, and for any $T > 0$ we denote by $\underline E_T$ the restriction of $\underline E$ to the finite cylinder $(-T,T) \times X$. If $u \in L^p(\underline E_2)$ and $Pu \in W^{l,p}(\underline E_2)$, then by elliptic regularity the restriction of $u$ to $(-1,1) \times X$ is in $W^{k+l,p}(\underline E_1)$, and moreover we have interior estimates:
\begin{equation*}
    \| u \|_{W^{k+l,p}(\underline E_1) } \leq C \left ( \| Pu \|_{W^{l,p}(\underline E_2)} + \| u \|_{L^p(\underline E_2)} \right ) .
\end{equation*}
Combined with the translation-invariance of $P$, we get interior estimates for sections of $\underline E_T$ that are independent of $T$:

\begin{prop}	\label{prop:lpaprioriest}
    Let $P : C^\infty(E) \rightarrow C^\infty(F)$ be a translation-invariant elliptic operator of order $k$, and let $p > 1$ and $l \in \N$. Then there exists $C > 0$ such that for any $T \geq 1$ the following holds. If $f \in W^{l,p}(\underline F_{T+1})$ and $u \in L^p(\underline E_{T+1})$ is a solution of $Pu = f$, then $\left. u \right|_{(-T,T) \times X} \in W^{k+l,p}(\underline E_T)$ with the bound:
    \begin{equation*}
        \| u \|_{W^{k+l,p}(\underline E_T) } \leq C \left ( \| f \|_{W^{l,p}(\underline E_{T+1})} + \| u \|_{L^p(\underline E_{T+1})} \right ) .
    \end{equation*}
\end{prop}
  
When $p \geq 2$, we can make a stronger statement. If $u \in L^2(\underline E_2)$ and $Pu \in W^{l,p}(\underline E_2)$, then the restriction of $u$ to $(-1,1) \times X$ is in $W^{k+l,p}(\underline E_1)$ and moreover we have an estimate:
\begin{equation*}
    \| u \|_{W^{k+l,p}(\underline E_1) } \leq C \left ( \| Pu \|_{W^{l,p}(\underline E_2)} + \| u \|_{L^2(\underline E_2)} \right ) .
\end{equation*}
Since $L^p((-2,2), L^2(E))$ continuously embeds into $L^2((-2,2),L^2(E)) \simeq L^2(\underline E_2)$, there exists a constant $C^\prime > 0$ such that if $u \in L^p((-2,2),L^2(E))$ then we have the following interior estimate:
\begin{equation*}
    \| u \|_{W^{k+l,p}(\underline E_1) } \leq C \left ( \| Pu \|_{W^{l,p}(\underline E_2)} + \| u \|_{L^p((-2,2),L^2(E))} \right ) .
\end{equation*}
Using translation-invariance this implies:

\begin{prop}  \label{prop:lptwoaprioriest}
    Let $P : C^\infty(E) \rightarrow C^\infty(F)$ be a translation-invariant elliptic operator of order $k$, and let $p \geq 2$ and $l \in \N$. Then there exists $C > 0$ such that for any $T \geq 1$ the following holds. If $f \in W^{l,p}(\underline F_{T+1})$ and $u \in L^p((-T-1,T+1),L^2(E))$ is a solution of $Pu = f$, then $\left. u \right|_{(-T,T) \times X} \in W^{k+l,p}(\underline E_T)$ with the bound::
    \begin{equation*}
        \| u \|_{W^{k+l,p}(\underline E_T) } \leq C \left ( \| f \|_{W^{l,p}(\underline E_{T+1})} + \| u \|_{L^p((-T-1,T+1),L^2(E))} \right ) .
    \end{equation*}
\end{prop}

\paragraph{} \label{par:roots} In the remainder of this part, we will be concerned with equations of the type
\begin{equation}		\label{eq:pufcore}
    P(x,\del_x,D_t) u(t,x) = f(t,x)
\end{equation}
where $P$ is a translation-invariant elliptic operator. It is usually studied by taking its Fourier transform in the variable $t$, which takes the form
\begin{equation}	\label{eq:ftpuf}
    P(x,\del_x,\lambda) \hat u(x,\lambda) = \hat f(x,\lambda).
\end{equation}
For any fixed $\lambda \in \C$, the operator $P(x,\del_x,\lambda) : C^\infty(E) \rightarrow C^\infty(F)$ is an elliptic operator of order $k$, and hence defines Fredholm maps $W^{k+l,p}(E) \rightarrow W^{l,p}(F)$ for $p > 1$ and $l \geq 0$. By the results of \cite{agmon1961properties} these maps are analytic in the variable $\lambda$, and there exists a discrete set $\mathscr C_P \subset \C$ such that the homogeneous equation 
\begin{equation} 	\label{eq:ftzero}
    P(x,\del_x,\lambda) \hat u(x,\lambda) = 0
\end{equation}
has a non-trivial solution if and only if $\lambda \in \mathscr C_P$. Moreover, $\mathscr C_P$ is finite on any strip $\{ \delta_1 < \im \lambda < \delta_2 \}$ of $\C$. The elements of $\mathscr C_P$ are called the \emph{roots} of $P$. The discrete set $\mathscr D_P = \{ \im \lambda, \lambda \in \mathscr C_P \}$ is called the set of \emph{indicial roots} of $P$.

\begin{ex}     \label{ex:roots}
    Consider the translation-invariant bundle $\Lambda_\C T^* Y$ of complex-valued differential forms. It splits as a direct sum:
    \begin{equation*}
        \Lambda_\C T^*Y = \underline{\Lambda_\C T^* X} \oplus dt \wedge \underline{\Lambda_\C T^* X}
    \end{equation*}
    where $\underline{\Lambda_\C T^* X}$ is the pull-back of the bundle of differential forms on $X$. The operators $d_Y$ and $d^*_Y$ take the form:
    \begin{equation*}
        \begin{cases}
            d_Y (\alpha + dt \wedge \beta) = d_X\alpha + dt \wedge (\del_t \alpha - d_X \beta) \\
            d_Y^* (\alpha + dt \wedge \beta) = d_X^* \alpha - \del_t \beta - dt \wedge d_X^*\beta
        \end{cases}
    \end{equation*}
    Thus if we define $J \in \End(\Lambda_\C T^*Y)$ by $J \eta = dt \wedge \eta - \iota_{\frac{\del}{\del t}} \eta$, where $\iota$ denotes the interior product, we can write the Fourier transform of the operator $d_Y + d^*_Y$ as 
    \begin{equation*}
        (d_Y+d^*_Y)(\lambda) \eta = (d_X + d^*_X)\alpha - dt \wedge (d_X+d^*_X) \beta + i \lambda J\eta.
    \end{equation*}
    with $\eta = \alpha + dt \wedge \beta$. The Laplacian  $\Delta_Y = d_Yd^*_Y + d^*_Yd_Y$ can be written as $\Delta_Y = -\del_t^2 + \Delta_X$, so that its Fourier transform is $\Delta_X + \lambda^2$. For both operators, the roots are exactly the values $\pm i \sqrt{\lambda_n}$, where $\lambda_n \geq 0$ are the eigenvalues of the Laplacian $\Delta_X$. In particular the only real root is $\lambda_0 = 0$, and the corresponding translation-invariant solutions are of the form $\alpha + dt \wedge \beta$, where $\alpha$ and $\beta$ are harmonic forms on $X$.
\end{ex}

\paragraph{} For $\lambda \in \C$, we will write $P(\lambda)$ as a short-hand for $P(x,\del_x,\lambda)$. It can be seen as a Fredholm map $W^{k+l,p}(E) \rightarrow W^{l,p}(F)$, analytic in the variable $\lambda$. This implies that $P(\lambda)$ is invertible for $\lambda \notin \mathscr C_P$ \cite{agmon1961properties,agranovich1963elliptic}. Its inverse $R(\lambda)$ is called the \emph{resolvent} of $P(\lambda)$; for any $m \leq k+l$ it can be considered as a bounded operator form $W^{l,p}(F)$ to $W^{m,p}(E)$ (which is compact when $m<k+l$). We will denote by $\| R(\lambda) \|_{l,m}$ the operator norm of the resolvent seen as a map $W^{l,p}(F) \rightarrow W^{m,p}(E)$. By the results of \cite{agmon1961properties} the resolvent is meromorphic in $\lambda \in \C$, with poles exactly at the roots of $P$. That is, around any $\lambda_0 \in \mathscr C_P$ we can write:
\begin{equation*}
    R(\lambda) = \frac{R_{-d}(\lambda_0)}{(\lambda-\lambda_0)^d} + \cdots + \frac{R_{-1}(\lambda_0)}{\lambda-\lambda_0} + \sum_{n=0}^{\infty} R_n(\lambda_0) (\lambda-\lambda_0)^n
\end{equation*}
where $R_{l}(\lambda_0)$ are bounded operators $W^{l,p}(F) \rightarrow W^{m,p}(E)$ and the series has positive radius of convergence. The largest positive integer $d$ such that $R_{-d}(\lambda_0) \neq 0$ is called the order of $\lambda_0$. The notions of root, pole and order do not depend on the Sobolev spaces we choose to work with.
 
The following bounds on the resolvent $R(\lambda)$ and its derivative $R^\prime(\lambda) = \frac{dR}{d\lambda}(\lambda)$ are crucial for our purpose, and follow from the more general \cite[Theorem 5.4]{agmon1961properties}:

\begin{thm}	\label{thm:boundres}
    Let $p > 1$, $l \in \N$ and $P$ be a translation-invariant elliptic operator. Then the following holds:
    \begin{enumerate}[(i)]
	    \item The resolvent $R(\lambda)$ has no poles in a double sector $\{ \arg( \pm \lambda) \leq \delta, ~~ | \lambda | \geq N \}$ and in this domain there exists a constant $C>0$ such that:
        \begin{equation*}
            \sum_{j=0}^k \left \| \lambda^{k-j} R(\lambda) \right \|_{l,l+j} \leq C.
        \end{equation*}
	    \item Furthermore, as $| \lambda | \rightarrow \infty$ along the real axis:
	    \begin{equation*}
	       \sum_{j=0}^k \left \| \lambda^{k-j} R^\prime(\lambda) \right \|_{l,l+j} = O \left( \frac{1}{\lambda} \right ).
	    \end{equation*}
    \end{enumerate}
\end{thm}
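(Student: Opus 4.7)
The plan is to recognise this as an instance of Agmon's resolvent estimates for parameter-elliptic operators, following \cite[Theorem 5.4]{agmon1961properties}. The main step is to view $\lambda \mapsto P(\lambda)$ as a holomorphic family of elliptic operators on the compact manifold $X$ with parameter $\lambda \in \C$, and to check that parameter-ellipticity holds in a double sector around $\R$. Since $P$ is elliptic on $Y = \R \times X$, its principal symbol $\sigma_k(P)(x;\xi,\tau)$ is invertible whenever $(\xi,\tau)\neq 0$ is real. Viewing this symbol as a polynomial in $\tau$ of degree $k$ and using homogeneity together with compactness of $X$ and of the unit sphere in $T_x^*X \oplus \R$, invertibility extends to a complex neighbourhood of the real $\tau$-axis. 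Hence there exist $\delta, N > 0$ such that the principal symbol $\sigma_k(P)(x;\xi,\lambda)$ is invertible for all $x \in X$, all $\xi \in T_x^*X$, and all $\lambda$ in the double sector $\Sigma_{\delta,N} = \{\arg(\pm\lambda) \le \delta,~ |\lambda| \ge N\}$ with $(\xi,\lambda) \neq (0,0)$.

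Applying \cite[Theorem 5.4]{agmon1961properties} to the parameter-elliptic family $P(\lambda)$ then provides a uniform a priori estimate on $\Sigma_{\delta,N}$,
\begin{equation*}
    \sum_{j=0}^k |\lambda|^{k-j} \|u\|_{W^{l+j,p}} \le C \|P(\lambda) u\|_{W^{l,p}},
\end{equation*}
valid for every $u \in W^{k+l,p}(E)$. This forces $P(\lambda)$ to be injective, and combined with the Fredholm property and the analyticity of the resolvent discussed in \ref{par:roots} it yields invertibility throughout $\Sigma_{\delta,N}$, i.e. the absence of poles in that sector. Substituting $u = R(\lambda) f$ gives (i). The same argument applies in the H\"older setting via the Schauder version of the parameter-elliptic estimate.

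For part (ii), differentiating the identity $P(\lambda) R(\lambda) = \Id$ in $\lambda$ yields
\begin{equation*}
    R'(\lambda) = - R(\lambda) P'(\lambda) R(\lambda),
\end{equation*}
where $P'(\lambda)$ is a polynomial in $\lambda$ of degree $k-1$ whose coefficients are the operators $A_{k-j}$ of order $k-j$ on $X$. Given $f \in W^{l,p}(F)$, set $u = R(\lambda) f$. Part (i) gives $\|u\|_{W^{l+k-j,p}} \le C|\lambda|^{-j}\|f\|_{W^{l,p}}$ for each $1 \le j \le k$, and since $A_{k-j}$ is uniformly bounded as a map $W^{l+k-j,p}(E) \to W^{l,p}(F)$, this yields $\|P'(\lambda) u\|_{W^{l,p}} \le C'|\lambda|^{-1}\|f\|_{W^{l,p}}$. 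The gain of $|\lambda|^{-1}$ reflects that $P'$ has order one less than $P$. Applying (i) once more to the outer factor $R(\lambda)$ then gives the desired bound in (ii).

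The main obstacle is the careful verification of Agmon's parameter-ellipticity hypothesis uniformly over the sector, and ensuring that the H\"older variant of \cite[Theorem 5.4]{agmon1961properties} applies in our geometric setting. Once (i) is established, (ii) is essentially algebraic and follows from the drop in order between $P$ and $P'$.
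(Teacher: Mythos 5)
Your proof is correct and follows essentially the same route as the paper: the paper proves nothing here beyond citing \cite[Theorem 5.4]{agmon1961properties} for both parts, which is exactly the parameter-ellipticity estimate you invoke for (i). Your derivation of (ii) from (i) via $R'(\lambda) = -R(\lambda)P'(\lambda)R(\lambda)$ and the order drop in $P'(\lambda)$ is a nice self-contained elaboration that the paper leaves implicit, and it is correct as written.
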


\paragraph{} The last result that we want to mention here is the following well-known proposition (see \cite{kondrat1967boundary} for an original reference), which can be seen as a particular case of Theorem \ref{thm:localsol}. When $P$ has not roots along the real axis the following holds.

\begin{prop}		\label{prop:dzero}
    Let $p >1$, $l \in \N$, and assume that $P$ has no real roots. Then the map $W^{k+l,p}(\underline E) \rightarrow W^{l,p}(\underline F)$ induced by $P$ admits a bounded inverse.
\end{prop}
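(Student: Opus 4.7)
The plan is to invert $P$ explicitly on the Fourier side in the variable $t$. Since the root set $\mathscr{C}_P$ is discrete and by assumption avoids the real axis, there exists $\delta > 0$ such that $\mathscr{C}_P$ is disjoint from the closed strip $\{|\im \lambda| \leq \delta\}$, and the resolvent $R(\lambda)$ extends holomorphically to this strip. Combining the sector estimate of Theorem \ref{thm:boundres}(i), which applies for $|\lambda| \geq N$, with compactness on the bounded part of the strip, I obtain uniform bounds $\sum_{j=0}^k \|\lambda^{k-j} R(\lambda)\|_{l, l+j} \leq C$ on the full strip.

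For $f \in C_c^\infty(\underline F)$, I set
\begin{equation*}
    Q_0 f(t,x) = \frac{1}{2\pi} \int_\R e^{i \lambda t} \bigl( R(\lambda) \hat f(\cdot,\lambda) \bigr)(x) \, d\lambda,
\end{equation*}
which is well defined by the rapid decay of $\hat f$ in $\lambda$ and tautologically satisfies $P Q_0 f = f$. The next step is to rewrite $Q_0 f = G * f$ as a convolution in $t$, where $G(t)$ is the operator-valued inverse Fourier transform of $R(\lambda)$. Using the holomorphy of $R$ on the strip together with its polynomial decay at infinity, Cauchy's theorem lets me shift the contour to $\im \lambda = -\delta$ when $t > 0$ and to $\im \lambda = +\delta$ when $t < 0$. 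This produces an explicit expression for $G(t)$ whose operator norm from $W^{l,p}(X,F)$ to $W^{l+j,p}(X,E)$ decays as $O(e^{-\delta|t|})$ for each $0 \leq j \leq k$. The same contour shift applied to $(i \lambda)^i R(\lambda)$ for $0 \leq i \leq k$ yields the kernels representing $\partial_t^i Q_0$, with the factor $\lambda^i$ absorbed into the bound $\|\lambda^{k-j} R(\lambda)\|_{l, l+j} \leq C$; these derivative kernels are likewise exponentially decaying and lie in $L^1(\R)$ with values in the relevant spaces of bounded operators over $X$.

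The Sobolev bound $\|Q_0 f\|_{W^{k+l, p}} \leq C \|f\|_{W^{l,p}}$ then follows by applying Lemma \ref{lem:youngsineq} to each of these convolutions with Young exponents $(1, p, p)$, and the Hölder bound is obtained analogously using that convolution with an $L^1$ kernel preserves Hölder regularity in the $t$ direction. Density of $C_c^\infty$ extends $Q_0$ to a bounded right inverse on the full Sobolev or Hölder spaces. For injectivity, any $u$ in the relevant space with $Pu = 0$ has Fourier transform (as a tempered distribution with values in sections over $X$) supported in $\mathscr C_P \cap \R = \emptyset$, hence $u = 0$; this shows $Q_0$ is also a left inverse.

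The main obstacle, and the step where the no-real-roots hypothesis is used essentially, is justifying the operator-valued Paley--Wiener contour shift: one must first establish the argument on a sufficiently rich dense subspace where all integrals converge absolutely in the relevant operator norms (for instance, $f$ whose $t$-Fourier transform is compactly supported with smooth $X$-dependence), then propagate to the general case using the bounds just proved. Once this is granted, the remainder is a direct application of Young's inequality combined with the resolvent estimates already available from Theorem \ref{thm:boundres}.
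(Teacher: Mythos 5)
Your proposal follows the same overall strategy as the paper — invert $P$ on the Fourier side in $t$, realise $Q_0$ as convolution with the inverse Fourier transform of the resolvent, and close using the vector-valued Young inequality together with the a priori elliptic estimates — but it diverges at the single crucial step of proving that the convolution kernel is $L^1$. You argue by operator-valued Paley--Wiener: extend $R(\lambda)$ holomorphically to a strip $\{|\im\lambda|\leq\delta\}$ free of roots, obtain uniform bounds there from Theorem~\ref{thm:boundres}(i) plus compactness, and shift the contour to $\im\lambda=\pm\delta$ to extract exponential decay $\|G(t)\|=O(e^{-\delta|t|})$. The paper instead stays entirely on the real axis and uses a more elementary device: from $\|\lambda^k R(\lambda)\|+\|\lambda^{k+1}R'(\lambda)\|\leq C$ (with $k\geq 1$) both $R$ and $R'$ are $L^2(\R)$, so $Q(t)$ and $tQ(t)$ are $L^2$, and writing $Q=\frac{1}{1+|t|}\cdot(1+|t|)Q$ gives $Q\in L^1$ by Cauchy--Schwarz. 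Your route yields the quantitatively stronger conclusion (exponential rather than merely integrable decay of the kernel), at the cost of carefully justifying the operator-valued contour deformation that you correctly flag as the delicate point; the paper's route never leaves the real axis and so sidesteps that issue entirely, but establishes only what is needed ($L^1$). Both are correct. One small slip: with the convention $\hat f(\lambda)=\int e^{-i\lambda t}f(t)\,dt$ and inverse transform carrying $e^{i\lambda t}$, the decay $|e^{i\lambda t}|=e^{-(\im\lambda)t}$ requires shifting \emph{up} to $\im\lambda=+\delta$ for $t>0$ and \emph{down} to $\im\lambda=-\delta$ for $t<0$; you have the two directions interchanged, though your stated conclusion $O(e^{-\delta|t|})$ is the correct one. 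You also gloss over the Hölder case, where $C^{0,\alpha}(\underline E)$ is not naturally $C^{0,\alpha}(\R,C^{0,\alpha}(E))$; the paper handles this by first landing in $L^\infty(\R,C^{0,\alpha}(E))$ and then bootstrapping with Proposition~\ref{prop:aprioriest}, a detail your "analogously" elides but which would need the same treatment.
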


A sketch proof of this Proposition is as follows. If $f$ is a smooth, compactly supported section of $\underline F$, then equation \eqref{eq:ftpuf} admits a solution $\hat u(\lambda) = R(\lambda) \hat f(\lambda)$, where we can consider $\hat f$ as a Schwartz function valued in $L^2(F)$ and the resolvent $R(\lambda)$ as a bounded map $L^2(F) \rightarrow L^2(E)$.  Hence we have a solution $u = Q[f] \in \mathscr S(\R,L^2(E))$ of $Pu = f$ defined as:
\begin{equation}
    Q[f](t) = \frac{1}{2\pi} \int e^{i\lambda t} R(\lambda) \hat f(\lambda) d\lambda, ~~~ \forall t \in \R .
\end{equation}
It follows from Theorem \ref{thm:multiplier} and the above bounds on the resolvent that $Q$ extends to a bounded linear map $L^p(\R,L^2(F)) \rightarrow L^p(\R,L^2(E))$ for any $1 < p < \infty$. If $p \geq 2$, the fact that $P$ admits a bounded inverse in the $L^p$-Sobolev range can therefore be deduced from Proposition \ref{prop:lptwoaprioriest} and the continuous embedding $L^p(\underline E) \hookrightarrow L^p(\R,L^2(E))$, and the case $1 < p < 2$ can be treated by duality.

When $P$ has real roots the statement of Proposition \ref{prop:dzero} does not hold anymore and the map induced by $P$ on Sobolev spaces is not even Fredholm. It still has finite-dimensional kernel but the cokernel has infinite dimension. In order to understand the mapping properties of $P$ in more detail, we want to make sense of the inverse Fourier transform of the singular part of the resolvent.

    \subsection{Polyhomogeneous sections}  \label{subsection:polysol}
  
\paragraph{} In this part, we prove that the action of $P$ on polyhomogeneous sections admits a right inverse and introduce a pairing which will play an important role in Section \ref{section:construction}. A section of $\underline E \rightarrow Y$ is called exponential if it is of the form $u(x,t) = e^{i\lambda t} p(x,t)$, where $\lambda \in \C$ is called the rate of $u$ and $p$ is polynomial in the variable $t$. A polyhomogeneous section is a finite sum of exponential sections. 
  
To understand the action of $P$ on polyhomogeneous sections, we fix $\lambda_0 \in \C$ and define:
\begin{equation}
    P_{\lambda_0}(x,\del_x, D_t) = e^{-i \lambda_0 t} P(x,\del_x, D_t) e^{i\lambda_0 t}
\end{equation}
which is a translation-invariant operator on $Y$. Explicitly $P_{\lambda_0}$ has for expression:
\begin{equation*}
    P_{\lambda_0}(D_t) = \sum_{n \geq 0} \frac{1}{n!} \frac{\del^n P}{\del \lambda^n} (\lambda_0) D_t^n.
\end{equation*}
We consider $P_{\lambda_0}$ as an operator mapping the space $W^{k,p}(E)[t]$ into $L^p(F)[t]$, that is we consider the action on sections of $\underline E \rightarrow Y$ that are polynomial in the variable $t$ and have $W^{k,p}$ coefficients. Our goal is to show that $P_{\lambda_0}$ admits a right inverse $Q_{\lambda_0}$.
  
Consider the resolvent $R(\lambda)$ as an operator $L^p(F) \rightarrow W^{k,p}(E)$. If $\lambda_0$ is a root of $P$, it is a pole of $R$ and we denote by $d(\lambda_0)$ its degree. By convention we set $d(\lambda_0) = 0$ if $\lambda_0$ is not a root of $P$. In general we may expand $R(\lambda)$ near $\lambda_0$ as:
\begin{equation*}
    R(\lambda) = \frac{R_{-d(\lambda_0)}(\lambda_0)}{(\lambda-\lambda_0)^{d(\lambda_0)}} + \cdots + \frac{R_{-1}(\lambda_0)}{\lambda-\lambda_0} + R_0(\lambda_0) + \sum_{m \geq 1} R_m(\lambda_0) (\lambda-\lambda_0)^m.
\end{equation*}
where for $m \geq - d(\lambda_0)$, $R_m(\lambda_0) : L^p(X,F) \rightarrow W^{k,p}(X,E)$ are bounded operators. The relations $R(\lambda) P(\lambda) = \Id_{W^{k,p}(E)}$ and $P(\lambda)R(\lambda) = \Id_{L^p(F)}$ that hold away form the roots of $P$ imply:
\begin{equation}		\label{eq:przero}
    \sum_{m+n = 0} \frac{1}{n!} R_m(\lambda_0) \frac{\del^n P}{\del \lambda^n}(\lambda_0) = \Id_{W^{k,p}(E)}, ~~ \sum_{m+n = 0} \frac{1}{n!}\frac{\del^n P}{\del \lambda^n}(\lambda_0) R_m(\lambda_0) = \Id_{L^p(F)} 
\end{equation}
and for any non-zero $l \in \Z$:
\begin{equation}		\label{eq:prl}
    \sum_{m+n = l} \frac{1}{n!} R_m(\lambda_0) \frac{\del^n P}{\del \lambda^n}(\lambda_0) = 0 = \sum_{m+n = l} \frac{1}{n!}\frac{\del^n P}{\del \lambda^n}(\lambda_0) R_m(\lambda_0) .
\end{equation}

\begin{ex}        \label{ex:resolvent}
    One can easily see from Example \ref{ex:roots} that $\lambda_0 = 0$ is a root of order $1$ of the operator $d_Y+d_Y^*$, and of order $2$ for the operator $\Delta_Y$. The singular parts of their resolvent can be computed with the above relations. For the operator $d_Y + d^*_Y$, relations \eqref{eq:prl} for $l=-1$ imply that $R_{-1}^{d+d^*}(0)$ vanishes on the orthogonal space to harmonic forms and maps into the space of harmonic forms.  Relations \eqref{eq:przero} imply that: 
    \begin{equation*}
        iR_{-1}^{d+d^*}(0) J \eta = \eta = iJ R_{-1}^{d+d^*}(0) \eta 
    \end{equation*}
    for any translation-invariant harmonic form on $Y$. As $J^2 = - 1$ we obtain:
    \begin{equation*}
        R_{-1}^{d+d^*}(0) = iJ \circ p_h = i p_h \circ J
    \end{equation*}
    where $p_h$ is the $L^2$-orthogonal projection onto the space of harmonic forms. As the Laplacian $\Delta_Y$ is the square of the operator $d_Y +d^*$ this implies that:
    \begin{equation*}
        R_{-2}^\Delta(0) =  (R_{-1}^{d+d^*}(0))^2 = (iJ)^2 p_h^2 = p_h.
    \end{equation*}
    On the other hand, as the Fourier transform of $\Delta_Y$ is an analytic function of the variable $\lambda^2$ it is easy to see that $R_{-1}^\Delta(0) = 0$.
\end{ex}

\paragraph{} With these notations in hand, let $D_t^{-1}$ be the endomorphism of $L^p(F)[t]$ mapping $\frac{(it)^j}{j!} v$ to $\frac{(it)^{j+1}}{(j+1)!} v$ for any $v \in L^p(F)$. This is a right inverse of $D_t$. Let us define the operator $Q_{\lambda_0} : L^p(F)[t] \rightarrow W^{k,p}(E)[t]$ by:
\begin{equation*}
    Q_{\lambda_0}(D_t, D_t^{-1}) = \sum_{m \geq - d(\lambda_0)} R_m(\lambda_0) D_t^m.
\end{equation*}
It maps polynomials of order $m$ to polynomials of order at most $m + d(\lambda_0)$. Moreover relations \eqref{eq:przero} and \eqref{eq:prl} imply the following: 

\begin{lem}
  The map $Q_{\lambda_0} : L^p(F)[t] \rightarrow W^{k,p}(E)[t]$ is a right inverse of $P_{\lambda_0}$. 
\end{lem}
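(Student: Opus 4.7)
The plan is to compute $P_{\lambda_0} Q_{\lambda_0}$ by direct formal expansion of the two series in $D_t$ and $D_t^{-1}$, regroup the terms by the total exponent, and recognise the coefficients as exactly the Laurent coefficients of the identity $P(\lambda) R(\lambda) = \Id$ at $\lambda_0$.

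Concretely, I would first fix a polynomial section $p(t)v \in L^p(F)[t]$ of degree $N$ and observe that, when acting on such an element, both series reduce to finite sums: $P_{\lambda_0}$ is a polynomial of degree $k$ in $D_t$ because $P(\lambda)$ is polynomial of degree $k$ in $\lambda$, while $Q_{\lambda_0}$ contributes only the terms $R_m(\lambda_0) D_t^m$ with $-d(\lambda_0) \leq m \leq N$ since $D_t$ (acting as a derivative on polynomials) annihilates terms of degree below the number of derivatives applied. Thus the formal manipulations below are justified without convergence issues.

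Next, I would compose and use the key identity $D_t^n \circ D_t^m = D_t^{n+m}$ on $L^p(F)[t]$ for every $n \geq 0$ and $m \in \Z$ (with $D_t^{-1}$ interpreted as the chosen right inverse of $D_t$): for $m \geq 0$ this is tautological, and for $m < 0$ it follows by iterating $D_t \circ D_t^{-1} = \Id$. Consequently
\begin{equation*}
    P_{\lambda_0} Q_{\lambda_0} = \sum_{n \geq 0} \sum_{m \geq -d(\lambda_0)} \frac{1}{n!} \frac{\del^n P}{\del \lambda^n}(\lambda_0)\, R_m(\lambda_0) \, D_t^{n+m} = \sum_{l \in \Z} \left( \sum_{n+m=l} \frac{1}{n!} \frac{\del^n P}{\del \lambda^n}(\lambda_0)\, R_m(\lambda_0) \right) D_t^l .
\end{equation*}
Now I apply the second set of identities in \eqref{eq:przero} and \eqref{eq:prl}, which come from expanding the relation $P(\lambda) R(\lambda) = \Id_{L^p(F)}$ in Laurent series around $\lambda_0$: the inner coefficient equals $\Id_{L^p(F)}$ when $l = 0$ and vanishes otherwise. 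Hence $P_{\lambda_0} Q_{\lambda_0} = \Id_{L^p(F)[t]}$, as desired.

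I do not anticipate a serious obstacle; the only point needing care is the bookkeeping for the composition of $D_t$ with its right inverse (in particular the fact that the sum really telescopes to $D_t^{n+m}$ without leaving behind boundary terms, which works precisely because we chose the \emph{right} inverse convention and because $P_{\lambda_0}$ is applied on the left). Everything else is a direct translation of the Laurent-series identities for the resolvent into operator form on $L^p(F)[t]$.
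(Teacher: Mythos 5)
Your proof is correct and follows exactly the line the paper intends when it asserts that ``relations \eqref{eq:przero} and \eqref{eq:prl} imply'' the lemma; you have simply spelled out the details. The two points you isolate — that on a polynomial section everything reduces to finite sums, and that $D_t^n \circ D_t^m = D_t^{n+m}$ for $n \geq 0$, $m \in \Z$ because $D_t^{-1}$ is a \emph{right} inverse (so that $D_t D_t^{-1} = \Id$ is the identity used in the telescoping, with $P_{\lambda_0}$ contributing the nonnegative powers on the left) — are precisely the two observations needed to turn the Laurent identity for $P(\lambda)R(\lambda)$ into the operator identity $P_{\lambda_0} Q_{\lambda_0} = \Id$ on $L^p(F)[t]$. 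No gap.
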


\paragraph{} Let us now turn our attention to the kernel of $P_{\lambda_0}$. It is non-trivial if and only if $\lambda_0$ is a root of $P$, which amounts to saying that the homogeneous equation $P_{\lambda_0} u = 0$ admits a non-trivial translation-invariant solution. Moreover, the kernel of $P_{\lambda_0}$ acting on polynomial sections in the variable $t$ is always finite-dimensional, and the degree of its elements is bounded above by the order of the root $\lambda_0$ minus one \cite{agmon1961properties}. In particular, if $\lambda_0$ has order $1$ the only polynomial solutions of $P_{\lambda_0} u = 0$ are translation-invariant.
  
For any root $\lambda_0$ of $P$, let us denote by $\mE_{\lambda_0}$ the (finite-dimensional) space of exponential solutions of $Pu = 0$ of rate $\lambda_0$, and $\mE^*_{\overline \lambda_0}$ the space of exponential solutions of $P^* v= 0$ of rate $\overline \lambda_0$ (note that $P(\lambda)^* = P^*(\overline \lambda)$ for any $\lambda \in \C$). As we are mainly interested in the real roots of $P$, we denote by $\lambda_1,...,\lambda_m$ the real roots and:
\begin{equation*}
    \mE = \bigoplus_{j=1}^m \mE_{\lambda_j}, ~~~ \mE^* = \bigoplus_{j=1}^m \mE^*_{\lambda_j}.
\end{equation*}
  
We now define a pairing $\mE \times \mE^* \rightarrow \C$ and derive its basic properties. Let $\chi : \R \rightarrow \R$ be a smooth function such that $\chi \equiv 0$ in a neighbourhood of $- \infty$ and $\chi \equiv 1$ in a neighbourhood of $+ \infty$. We define a sesquilinear pairing $( \cdot, \cdot ) : \mE \times \mE^* \rightarrow \C$ by the integral:
\begin{equation}
    (u, v) = \int_\R \left \langle P(D_t) \left [ \chi(t) u(t) \right ], v(t) \right \rangle dt.
\end{equation}
Here, we denote by $\langle \cdot, \cdot \rangle$ the $L^2$-product on the compact manifold $X$. This is well-defined as $P(D_t) \left [ \chi(t) u(t)) \right ]$ is compactly supported for any $u \in \mE$. Further, it does not depend on the choice of function $\chi$. Indeed, if $\tilde \chi$ is another smooth function that satisfies the same assumptions, define $\chi_\tau = (1-\tau) \chi + \tau \tilde \chi$ for $\tau \in [0,1]$. As $\frac{\del\chi_\tau}{\del \tau}(t) u(t)$ is a compactly supported section of $\underline E$, we can integrate by parts to obtain:
\begin{equation*}
    \frac{d}{d\tau} \int_\R \left \langle P(D_t) \left [ \chi_\tau(t) u(t) \right ], v(t) \right \rangle dt = \int_\R \left \langle P(D_t) \left [ \frac{\del\chi_\tau}{\del \tau}(t) u(t) \right ], v(t) \right \rangle dt = 0
\end{equation*}
as $P^*(D_T) v(t) = 0$. Therefore the pairing does not depend on the choice of $\chi$.
  
An important consequence of this observation is that $\mE_{\lambda_i}$ is orthogonal to $\mE^*_{\lambda_j}$ for the pairing $( \cdot, \cdot)$ unless $i = j$. Indeed, let $u \in \mE_{\lambda_i}$ and $v \in \mE^*_{\lambda_j}$, and replace $\chi(t)$ by $\chi(t-\tau)$ in the definition of the pairing, for $\tau \in \R$. Then we can compute by a change of variables:
\begin{equation*}
    \int_\R \left \langle P(D_t) \left [ \chi(t-\tau) u(t) \right ], v(t) \right \rangle dt = e^{i (\lambda_i - \lambda_j) \tau} \left [ (u,v) + \sum_{l \geq 1} a_l(u,v) \tau^l \right ]
\end{equation*}
where the coefficients $a_l(u,v)$ are independent of $\tau$, and only finitely many of them are non-zero. As this has to be equal to $(u,v)$ for all $\tau \in \R$, this implies $a_l = 0$ for $l \geq 1$ and $(u,v) = 0$ when $i \neq j$.
  
\paragraph{} The key property of the pairing $(\cdot, \cdot)$ is the following:
  
\begin{lem}
    The pairing $(\cdot, \cdot)$ is non-degenerate.
\end{lem}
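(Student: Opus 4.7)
By the orthogonality $(\mE_{\lambda_i},\mE^*_{\lambda_j}) = 0$ for $i\neq j$ just established, non-degeneracy on $\mE\times\mE^*$ reduces to non-degeneracy of the restriction to each block $\mE_{\lambda_0}\times\mE^*_{\lambda_0}$ for a real root $\lambda_0$ of order $d$. Fix such $\lambda_0$ and write $u = e^{i\lambda_0 t}\sum_{j=0}^{d-1}\tfrac{(it)^j}{j!}u_j(x)$ and $v = e^{i\lambda_0 t}\sum_{k=0}^{d-1}\tfrac{(it)^k}{k!}v_k(x)$, using the Agmon bound on the degree of polynomial solutions recalled above.

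My first step is to derive an explicit algebraic formula for $(u,v)$ via Parseval on the Fourier side. Since $\hat v(\lambda) = 2\pi\sum_k\tfrac{(-1)^k}{k!}\delta^{(k)}(\lambda-\lambda_0) v_k$ and $\widehat{\chi u}$ admits the Laurent expansion $\sum_{j=0}^{d-1}\tfrac{-iu_j}{(\lambda-\lambda_0)^{j+1}} + H(\lambda)$ near $\lambda_0$ with $H$ entire, Parseval yields $(u,v) = \sum_k\tfrac{1}{k!}\langle \partial_\lambda^k\hat f(\lambda_0), v_k\rangle$ where $\hat f(\lambda) = P(\lambda)\widehat{\chi u}(\lambda)$ is entire. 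The contribution of $H$ cancels by the $\chi$-invariance just proven (a compactly supported variation of $\chi$ adds the entire piece $P(\lambda)\widehat{\phi u}$, whose pairing against $v$ equals $\int\langle\phi u, P^* v\rangle\, dt = 0$), leaving
\begin{equation*}
    (u,v) \;=\; -i\sum_{M\geq 1}\tfrac{1}{M!}\sum_{j+k=M-1}\bigl\langle P^{(M)}(\lambda_0) u_j,\,v_k\bigr\rangle,
\end{equation*}
a finite sum since $P$ is polynomial in $\lambda$.

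The second step is to prove non-degeneracy of this algebraic pairing by a contradiction argument exploiting the Agmon bound. Assume $u\neq 0$ with $(u,v) = 0$ for all $v\in\mE^*_{\lambda_0}$. Testing against $v$'s with only the lowest coefficient $v_0\in\ker P^*(\lambda_0)$ nonzero (always a valid element of $\mE^*_{\lambda_0}$), the vanishing reduces, by closed range of $P(\lambda_0)$, to the existence of $w\in C^\infty(X,E)$ with $P(\lambda_0)w = \sum_{M\geq 1}\tfrac{1}{M!}P^{(M)}(\lambda_0)u_{M-1}$. Setting $\tilde p(t,x) = \sum_{j=1}^{d}\tfrac{(it)^j}{j!}u_{j-1} - w$, a direct check using $P_{\lambda_0} p = 0$ verifies $P_{\lambda_0}\tilde p = 0$, and $\tilde p$ has genuine degree $d$ whenever $u_{d-1}\neq 0$, contradicting the Agmon bound. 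For $d=1$ this is the entire argument; for $d\geq 2$, once $u_{d-1}=0$ is established the polynomial is effectively of lower degree and one iterates, this time using test sections with nonzero higher components $v_k$ to extract successively the vanishing of $u_{d-2},\ldots,u_0$.

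The main obstacle is the inductive step for general $d$: as the effective degree of $p$ drops, the simple top-shift construction above produces polynomial solutions already within the Agmon bound, so it alone does not yield a contradiction; one must then combine higher-order shifts $(it)^l p$ with finer closed-range equations whose solvability comes from pairing vanishing against $v$'s with nonzero higher coefficients. The formula above is essentially the Bezoutian pairing of the analytic family $P(\lambda)$ at $\lambda_0$, and its non-degeneracy reflects the matching Jordan-block structure of $P(\lambda)$ and $P^*(\lambda) = P(\lambda)^*$ at $\lambda_0$; a systematic induction on the combined lengths of the Jordan chains of $u$ and $v$ should make this iteration precise.
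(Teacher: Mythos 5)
Your approach is genuinely different from the paper's. You derive an explicit algebraic (Bezoutian-type) expression for the pairing on each block $\mE_{\lambda_0}\times\mE^*_{\lambda_0}$ via Parseval, then attempt a contradiction argument driven by the Agmon bound on the degree of polynomial solutions of $P_{\lambda_0}u=0$. The formula you obtain is plausible and does check out against the paper's explicit computation in the order-one case: when $d=1$ it reduces to $(u,v)=-i\langle P'(\lambda_0)u_0,v_0\rangle$, and using the residue relation $P(\lambda_0)R_0(\lambda_0)+P'(\lambda_0)R_{-1}(\lambda_0)=\Id$ this gives $-i\|v_0\|^2$ for $u_0 = R_{-1}(\lambda_0)v_0$, in agreement with the paper. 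Your degree-one argument (testing against $v_0\in\ker P^*(\lambda_0)$, solving $P(\lambda_0)w = P'(\lambda_0)u_0$ by the closed-range property, and exhibiting the degree-one polynomial solution $itu_0-w$) is correct and complete for $d=1$.

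However, for $d\geq 2$ there is a real gap, which you have identified yourself but not closed. Once you establish $u_{d-1}=0$, the section $p$ has degree at most $d-2$, and the top-shift construction produces a polynomial solution of degree at most $d-1$, which is perfectly allowed by the Agmon bound; so no contradiction results, and a fundamentally different device is needed at that stage. Your sketch — "combine higher-order shifts with finer closed-range equations" organised by an "induction on the combined lengths of the Jordan chains" — is plausible in spirit, but it is not carried out, and the bookkeeping it requires (extracting, from the vanishing of $(u,v)$ against test sections $v$ with nonzero \emph{higher} coefficients $v_k$, a hierarchy of solvable equations $P(\lambda_0)w_k=\ldots$ consistent across degrees) is precisely where the difficulty lies. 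As written the proposal proves the lemma only when all real roots have order one.

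For comparison, the paper's proof is constructive and bypasses the induction entirely. Given a degree-zero $v_0 \in \ker P^*(\lambda_0)$, it applies the right inverse $Q_{\lambda_0}$ (already constructed in that subsection) to produce a polynomial $u$ with $P_{\lambda_0}u=v_0$; then $D_t u$ solves the homogeneous equation, and a direct integration by parts gives $(e^{i\lambda_0 t}D_t u,\,e^{i\lambda_0 t}v_0)=-i\|v_0\|^2\neq 0$. The passage from degree-zero $v$ to arbitrary $v$ is handled by the single conjugation identity $(u,\,e^{i\lambda_0 t}D_t^m e^{-i\lambda_0 t}v)=(e^{i\lambda_0 t}D_t^m e^{-i\lambda_0 t}u,\,v)$, which reduces the general case to the one already solved. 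This is both shorter and avoids the degree-drop issue that your inductive step runs into. Your Parseval formula is a nice piece of insight and could be useful elsewhere, but to make your proof of non-degeneracy complete you would need to actually write down and verify the induction you gesture at, or replace it with a constructive argument like the paper's.
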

 
\begin{proof}
    By the above remarks it suffices to show that the restriction of $(\cdot, \cdot)$ to $\mE_{\lambda_j} \times \mE^*_{\lambda_j}$ is non-degenerate. Consider first $v \in \ker P^*(\lambda_j)$, so that $\tilde v(t,x) = e^{i \lambda_j t} v(x)$ is an element of $\mE^*_{\lambda_j}$. Considering $v$ as an element of $L^2(F)[t]$, we define $u(t,x) = Q_{\lambda_j} v$. This is a polynomial of order at most $d(\lambda_j)$ in the variable $t$, and it satisfies:
    \begin{equation*}
        P_{\lambda_j}(D_t) u(t) = v.
    \end{equation*}
    Differentiating this expression in the $t$ variable it follows that:
    \begin{equation*}
        P_{\lambda_j}(D_t) [ D_t u(t) ] = 0
    \end{equation*}
    so that $\tilde u(t,x) = e^{i \lambda_j t} D_t u(t,x)$ is in $\mE_{\lambda_j}$. Let us now pick a function $\chi$ as above and compute:
    \begin{align*}
        \int_\R & \left \langle P(D_t) \left [ \chi(t) \tilde u(t) \right ] , e^{i \lambda_j t} v \right \rangle dt = \int_\R \left \langle P_{\lambda_j} (D_t) \left [ \chi(t) D_t u(t) \right ], v \right \rangle dt \\
        & = \frac 1 i \int \frac{d}{dt} \left \langle P_{\lambda_j}(D_t) \left[ \chi(t) u(t) \right], v \right \rangle dt - \frac 1 i \int \left \langle P_{\lambda_j}(D_t) \left[ \chi'(t) u(t) \right], v \right \rangle dt \\
        & = \frac 1 i \langle v, v \rangle - 0
    \end{align*}
    which holds because $P(D_t) [ \chi(t) u(t) ] \equiv v$ as $t$ goes to  $+ \infty$ and $P(D_t) [ \chi(t) u(t) ] = 0$ as $t$ goes to $ -\infty$. Thus we have $(\tilde u, \tilde v) = -i \|v\|_{L^2}^2$ which is non-zero when $v \neq 0$.
  
    In general, let $v(t,x)$ be an element of $\mE^*_{\lambda_j}$ of degree $m$. Then $e^{i\lambda_j t} D_t^m e^{-i\lambda_j t}v(t,x)$ is a non-zero element of $\mE^*_{\lambda_j}$ of degree zero. By the above argument there exists $u(t,x)$ in $\mE_{\lambda_j}$ such that $(u,e^{i\lambda_j t}D_t^m  e^{-i\lambda_j t} v) \neq 0$.  Moreover one can easily check that:
    \begin{equation*}
        (u,e^{i\lambda_j t}D_t^m  e^{-i\lambda_j t} v) = (e^{i\lambda_j t}D_t^m  e^{-i\lambda_j t} u, v)
    \end{equation*}
    and $e^{i\lambda_j t}D_t^m  e^{-i\lambda_j t} u \in \mE_{\lambda_j}$. Hence the pairing $(\cdot, \cdot)$ is non-degenerate.
\end{proof}
 
\begin{ex}     \label{ex:pairing}
    The space of translation-invariant solutions of the operator $d_Y + d^*_Y$ acting on $\Lambda_\C T^* Y$ is:
    \begin{equation*}
        \mE_{d+d^*} = \mE^*_{d+d^*} = \left \{ \alpha + dt \wedge \beta, ~ \alpha, \beta \in C^\infty(\Lambda_\C T^*X), ~ \Delta_X \alpha = \Delta_X \beta = 0 \right \}
    \end{equation*}
    If $\alpha + dt \wedge \beta, ~ \alpha'+ dt \wedge \beta' \in \mE$ we can compute their pairing:
    \begin{align*}
        (\alpha + dt \wedge \beta, \alpha' + dt \wedge \beta') & = \int \langle (d_Y + d^*_Y) (\chi(\tau) \alpha + dt \wedge \beta), \alpha' + dt \wedge \beta' \rangle d\tau \\
        & = \int \chi'(\tau) \langle dt \wedge \alpha - \beta, \alpha' + dt \wedge \beta' \rangle d\tau \\
        & = \langle \alpha, \beta' \rangle - \langle \beta, \alpha' \rangle
    \end{align*}
    which is clearly non-degenerate. 
 
    For the Laplacian $\Delta_Y$ acting on $q$-forms, the spaces $\mE_q$ and $\mE_q^*$ are both isomorphic to the space $q$-forms that can be written as $\eta_0 + t \eta_1$, where $\eta_i = \alpha_i + dt \wedge \beta_i$ with $\alpha_i \in \Omega^q_\C(X)$ and $\beta_i \in \Omega_\C^{q-1}(X)$ harmonic. In the same way one can easily derive:
    \begin{equation*}
        (\eta_0 + t \eta_1, \eta'_0 + t \eta'_1 ) = \langle \alpha_0, \alpha'_1 \rangle + \langle \beta_0, \beta'_1 \rangle - \langle \alpha_1, \alpha'_0 \rangle - \langle \beta_1, \beta'_0 \rangle.
    \end{equation*}
\end{ex}

    \subsection{Existence of solutions}	\label{subsection:localex}

\paragraph{} In this part we prove Theorem \ref{thm:localsol}, beginning by the case $p \geq 2$. Let us consider a translation-invariant elliptic differential operator $P : C^\infty(\underline E) \rightarrow C^\infty(\underline F)$ of order $k$ with real roots $\lambda_1, \ldots ,\lambda_m$. For $1 \leq j \leq m$, let $d(\lambda_j)$ be the order of the root $\lambda_j$. Considering the resolvent as a family of (compact) operators from $L^2(F)$ to $L^2(E)$, we have a decomposition of the form:
\begin{equation}        \label{eq:decompresolv}
    R(\lambda) = R_r(\lambda) + \sum_{j=1}^m \sum_{l = 1}^{d(\lambda_j)} \frac{R_{-l}(\lambda_j)}{(\lambda - \lambda_j)^l}
\end{equation}
where the regular part of the resolvent $R_r(\lambda)$ is an analytic function from a neighborhood of the real line in $\C$ to $B(L^2(F),L^2(E))$. We will denote the second term of the left-hand-side of equation \eqref{eq:decompresolv} by $R_s(\lambda)$; this is the singular part of the resolvent.
    
Since $p \geq 2$, a section $f \in L^p_c(\underline F)$ can be considered as an element of $L^p(\R,L^2(F))$, which has compact essential support. We want to find a solution of equation \eqref{eq:pufcore} through the study of the Fourier transformed equation \eqref{eq:ftpuf}. As the resolvent has poles we need to make sense of the expression $\hat u (\lambda) = R(\lambda) \hat f(\lambda)$, or rather of its inverse Fourier transform. 
    
Differentiating the identity $P(\lambda)R(\lambda) = \Id_{L^2(F)}$ and using the bounds of Theorem \ref{thm:boundres}, we see that the resolvent and all its derivatives have at most polynomial growth at infinity. Since this also true of the singular part of the resolvent, which is bounded at infinity as well as all of its derivatives, then the same holds for the regular part of the resolvent.  On the other hand, from Theorem  \ref{thm:boundres} we have a bound:
\begin{equation*}
    \left \| R(\lambda) \right \| + \left \| \lambda R^\prime(\lambda) \right \| = O\left( \frac{1}{\lambda}\right)
\end{equation*}
as $|\lambda| \rightarrow \infty$. Further this bound clearly also holds for the singular part of the resolvent. Therefore there exists a constant $C > 0$ such that for all $\lambda \in \R$ we have:
\begin{equation*}
    \left \| R_r(\lambda) \right \| + \left \| \lambda R^\prime_r(\lambda) \right \| \leq C
\end{equation*}
Thus Theorem \ref{thm:multiplier} implies that $R_r(\lambda)$ induces a bounded map $Q_r : L^p(\R,L^2(F)) \rightarrow L^p(\R,L^2(E))$ defined as:
\begin{equation}
    Q_r[v](t) = \frac{1}{2\pi} \int e^{i\lambda t} R_r(\lambda) \hat v(\lambda) d\lambda, ~~~ \forall v \in L^p(\R,L^2(E)). 
\end{equation}
Define $u_r = Q_r[f] \in L^p(\R,L^2(E))$. By definition we have $\hat u_r(\lambda) = R_r(\lambda) \hat f(\lambda)$ for $\lambda \in \R$. As $f$ has compact support, its Fourier transform $\hat f(\lambda)$ can be continued as an analytic $L^2(F)$-valued function of the variable $\lambda \in \C$. Moreover, $R_r(\lambda)$ has no poles in a complex strip of the form $\{  | \im \lambda | < \delta \}$ for some $\delta > 0$, and therefore $\hat u_r(\lambda)$ can be extended as an analytic $L^2(E)$-valued function for $\lambda$ varying in a neighbourhood of the real line in $\C$.
    
We now deal with the singular part of the resolvent. Our main problem is that we cannot directly make sense of the inverse Fourier transform of $R_s(\lambda) \hat f(\lambda)$. Nevertheless, it is natural to define the following:
\begin{equation}        \label{eq:defus}
    u_s(t) = Q_s[f](t) = \sum_{j=1}^m \sum_{l=1}^{d(\lambda_j)} i^l e^{i \lambda_j t} \int_{-\infty}^t \frac{(t-\tau)^{l-1}}{(l-1)!} e^{- i \lambda_j \tau} R_{-l}(\lambda_j) f(\tau) d\tau.
\end{equation}
Note that the integrals are well-defined because $f$ has compact essential support, and therefore $u_s$ is a map $\R \rightarrow L^2(E)$. If we define $H_{l,\lambda}(t) = e^{i\lambda_j t} \frac{i^l t^{l-1}}{(l-1)!} H(t)$ where $H$ is the Heaviside step function, then we can write $u_s$ more compactly as a convolution:
\begin{equation}
    u_s = \sum_{j=1}^m \sum_{l=1}^{d(\lambda_j)} H_{l,\lambda_j} * (R_{-l}(\lambda_j) f).
\end{equation}
In general $u_s$ is not in $L^p(\R,L^2(E))$, but its restriction to any finite interval $(-T,T)$ is $L^p$. We will shortly provide more precise estimates, but we first want to prove that $u = u_r + u_s$ satisfies $Pu =f$.

In order to do this, let us first compute $P(D_t) u_r(t)$, considering $D_t$ as a weak derivative wherever appropriate. Taking the Fourier transform, we may compute $P(\lambda)u_r(\lambda)$ for $\lambda \in \R \backslash \{\lambda_1,...,\lambda_m \}$ as follows. As $P(\lambda)R(\lambda) = \Id_{L^2(F)}$, we have:
\begin{equation*}
    P(\lambda) \hat u_r(\lambda) = \hat f(\lambda) - P(\lambda)R_s(\lambda) \hat f(\lambda).
\end{equation*}
For each root $\lambda_j$, we can expand $P(\lambda)$ in Taylor series around $\lambda_j$ to compute:
\begin{equation*}
    P(\lambda) \sum_{l = 1}^{d(\lambda_j)} \frac{R_{-l}(\lambda_j)}{(\lambda - \lambda_j)^l} = \sum_n \sum_{l = 1}^{d(\lambda_j)} \frac{(\lambda-\lambda_j)^{n-l}}{n!} \frac{\del^n P}{\del \lambda^n}(\lambda_j)R_{-l}(\lambda_j).
\end{equation*}
By relations \eqref{eq:prl}, the expansion of the sum in powers of $\lambda-\lambda_j$ is polynomial, that is the sum of the terms containing negative powers of $\lambda-\lambda_j$ vanishes. This yields:
\begin{equation*}
    P(\lambda) \hat u_r (\lambda) = \hat f(\lambda) - \sum_{j=1}^m \sum_{l \geq 1, ~ n-l \geq 0}  \frac{(\lambda-\lambda_j)^{n-l}}{n!} \frac{\del^n P}{\del \lambda^n}(\lambda_j) R_{-l}(\lambda_j) \hat f(\lambda)
\end{equation*}
which holds for $\lambda \neq \lambda_j$. As both sides of the equality are analytic in the variable $\lambda$, this is in fact true for all $\lambda$ contained in a neighbourhood of the real line in $\C$. We can therefore take the inverse Fourier transform to obtain:
\begin{equation}    \label{eq:purcomp}
    P(D_t) u_r(t) = f(t) - \sum_{j=1}^m \sum_{l \geq 1, ~ n-l \geq 0}  e^{i \lambda_j t} D_t^{n-l} \left [ \frac{1}{n!} \frac{\del^n P}{\del \lambda^n}(\lambda_j) R_{-l}(\lambda_j) e^{-i \lambda_j t} f(t) \right ].
\end{equation}
    
Next, we compute $P(D_t) u_s$. Let us remark that for $n < l$ we have the following identity:
\begin{equation}        \label{eq:ninfl}
    e^{i\lambda t} D_t^n e^{-i \lambda t} H_{l,\lambda} = (D_t-\lambda)^n H_{l,\lambda} = H_{l-n, \lambda}
\end{equation}
and for $n = l$, we have:
\begin{equation}        \label{eq:neql}
    e^{i\lambda t} D_t^l e^{-i \lambda t} H_{l,\lambda} = \delta
\end{equation}
where $\delta$ here is a Dirac mass centered at $t = 0$. Writing $P(D_t) = e^{i\lambda_j t} P_{\lambda_j}(D_t) e^{-i \lambda_j t}$ where $P_{\lambda_j}(D_t)$ is the operator defined in \S\ref{subsection:polysol}, we have
\begin{equation*}
    P \sum_{l=1}^{d(\lambda_j)} H_{l,\lambda_j} * (R_{-l}(\lambda_j) f) = \sum_{n \geq 0} \sum_{l=1}^{d(\lambda_j)} e^{i \lambda_j t} D_t^n e^{-i \lambda_j t} H_{l, \lambda_j} * \left [ \frac{1}{n!} \frac{\del^n P}{\del \lambda^n}(\lambda_j) R_{-l}(\lambda_j) f \right ].
\end{equation*}
If we split the sum into two parts, we see using \eqref{eq:weakderf} and \eqref{eq:neql} that the sum of the terms for which $n \geq l$ is equal to:
\begin{equation}        \label{eq:splitsum}
    \sum_{n \geq l} \sum_{l=1}^{d(\lambda_j)} e^{i\lambda_j t} D_t^{n-l} \left [ \frac{1}{n!} \frac{\del^n P}{\del \lambda^n}(\lambda_j) R_{-l}(\lambda_j) e^{-i \lambda_j t} f(t) \right ] .
\end{equation}
On the other hand, the sum of the terms for which $n < l$ can be computed using \eqref{eq:weakderf} and \eqref{eq:ninfl}, and in fact this sum vanishes by \eqref{eq:prl}:
\begin{equation}
    \sum_{n < l} \sum_{l=1}^{d(\lambda_j)} H_{l-n,\lambda_j} * \left [ \frac{1}{n!} \frac{\del^n P}{\del \lambda^n}(\lambda_j) R_l(\lambda_j) f\right ] = 0 .
\end{equation}
Comparing with \eqref{eq:purcomp}, this proves that $Pu =f$. Once we prove that $u_s$ is in $L^p(I,L^2(E))$ for any finite interval $I \subset \R$, it will follow from Proposition \ref{prop:lptwoaprioriest} that $u$ is $W^{k,p}_{loc}$. Thus we have a well-defined map $Q = Q_r + Q_s : L^p_{c}(\underline F) \rightarrow W^{k,p}_{loc}(\underline E)$ which is a right inverse for $P$.
    
\paragraph{} It remains to prove the estimates of Theorem \ref{thm:localsol}. Let $T \geq 1$ and $f \in L^p_c(\underline F)$ with essential support contained in $(-T,T) \times X$. For $-T-1 \leq t \leq T+1$, we can write:
\begin{align*}
    u_s(t) & = \sum_{j=1}^m \sum_{l=1}^{d(\lambda_j)} \int_0^\infty \frac{i^l \tau^{l-1}}{(l-1)!} e^{ i \lambda_j \tau} R_{-l}(\lambda_j) f(t-\tau) d\tau \\
    & = \sum_{j=1}^m \sum_{l=1}^{d(\lambda_j)} \int_0^{2T+1} \frac{i^l \tau^{l-1}}{(l-1)!} e^{ i \lambda_j \tau} R_{-l}(\lambda_j) f(t-\tau) d\tau
\end{align*}
In particular the $L^2$-norm of $u_s(t)$ is bounded by:
\begin{align*}
    \|u_s(t)\|_{L^2(E)} & \leq \sum_{j=1}^m \sum_{l=1}^{d(\lambda_j)} \int_0^{2T+1} \frac{\tau^{l-1}}{(l-1)!} \| R_{-l}(\lambda_j) f(t-\tau)\|_{L^2(F)} d\tau  \\
    & \leq C \int_{-\infty}^{+\infty} \chi_{(0,2T+1)}(\tau)\left( \sum_{j=1}^m \sum_{l=1}^{d(\lambda_j)} \frac{\tau^{l-1}}{(l-1)!} \right) \| f(t-\tau)\|_{L^2(F)} d\tau
\end{align*}
where $C$ is a constant depending only on the maps $R_{-l}(\lambda_j)$ and $\chi_{(0,2T+1)}$ is the characteristic function of the interval $(0,2T+1)$. The function $\chi_{(0,2T+1)}(\tau)\left( \sum_{j=1}^m \sum_{l=1}^{d(\lambda_j)} \frac{\tau^{l-1}}{(l-1)!} \right)$ is $L^1$, and since $d$ is the maximum of the $d(\lambda_j)$ it has $L^1$-norm bounded by $CT^d$ for some constant $C > 0$ that does not depend on $T \geq 1$. Thus, as a function of the variable $t \in (-T-1,T+1)$, the function $\|u_s(t)\|_{L^2(E)}$ is bounded above by the convolution of the $L^1$-function $\chi_{(0,2T+1)}(\tau)\left( \sum_{j=1}^m \sum_{l=1}^{d(\lambda_j)} \frac{\tau^{l-1}}{(l-1)!} \right)$ and the $L^p$-function $\| f(\tau) \|_{L^2}$, and therefore Young's inequality yields:
\begin{equation*}
    \|u_s\|_{L^p((-T-1,T+1),L^2(E))} \leq C T^d \|f\|_{L^p(\R,L^2(E))} .
\end{equation*}
Consequently, the restriction of $u=u_s+u_r$ to $(-T-1,T+1) \times X$ is in $L^p((-T-1,T+1),L^2(E))$ for any $T \geq 1$. By Proposition \ref{prop:lptwoaprioriest}, $\left. u \right|_{(-T,T) \times X} \in W^{k,p}(\underline E_T)$ and:
\begin{align*}
    \|u\|_{W^{k,p}(\underline E_T)} & \leq C (\|f\|_{L^p(\underline F_{T+1})} + \|u\|_{L^p((-T-1,T+1),L^2(E))} )  \\
    & \leq C (\|f\|_{L^p(\underline F)} + \|u_r\|_{L^p(\R,L^2(E))} + \|u\|_{L^p((-T-1,T+1),L^2(E))}) \\
    & \leq C (\|f\|_{L^p(\underline F)} + \|f\|_{L^p(\R,L^2(F))}) + CT^d\|f\|_{L^p(\R,L^2(F))})  \\
    & \leq CT^d\|f\|_{L^p(\underline F)}
\end{align*}
which holds since the $L^p(\underline E)$-norm of $f$ controls its $L^p(\R,L^2(F))$-norm. Moreover, we can apply the same argument for any arbitrarily large $T^\prime \geq T$ to deduce that $u \in W^{k,p}_{loc}(\underline E)$. This finishes the proof of Theorem \ref{thm:localsol} in the case where $p \geq 2$. The case $1 < p < 2$ can be treated by duality, since the formal adjoint $P^*$ is also a translation-invariant elliptic operator, and the maximal order of the real roots of $P^*$ is also $d$.

\paragraph{} In the remainder of this part shall comment on the asymptotic behaviour of the solutions we constructed above. Let $f \in L^p_c(\underline F)$ and let $u$, $u_s$ and $u_r$ be defined as above. Assume that the essential support of $f$ is contained in $(-T,T) \times X$. Then, outside of this compact set we have:
\begin{equation*}
    Pu_s = 0 = Pu_r.
\end{equation*}
As $Pu = 0$ in this domain it suffices to show that $Pu_s = 0$. This is a consequence of \eqref{eq:splitsum} which yields:
\begin{equation*}
    Pu_s = \sum_{n \geq l} \sum_{l=1}^{d(\lambda_j)} e^{i\lambda_j t} D_t^{n-l} \left [ \frac{1}{n!} \frac{\del^n P}{\del \lambda^n}(\lambda_j) R_{-l}(\lambda_j) e^{-i \lambda_j t} f(t) \right ] .
\end{equation*}
For $|t| > T$ the expression under brackets vanishes identically, which proves our claim.
    
An important consequence of this fact is that $u_r$ has exponential decay as $|t| \rightarrow \infty$, in the sense that the $W^{k,p}$-norm of $e^{\delta \rho} u_r$ is finite for some $\delta > 0$, where $\rho$ denotes an arbitrary smooth function on $Y$ equal to $|t|$ when $|t| \geq 1$.  This can be seen as a particular case of Lockhart--McOwen theory (see \S\ref{subsection:lmt}). On the other hand, it is easy to see from its definition that $u_s$ vanishes identically in the domain $\{ t < - T \}$, and more interestingly, $u_s$ is equal to the restriction of a polyhomogeneous solution of $Pu = 0$ in the domain $\{t > T \}$. Indeed for $t > T$ \eqref{eq:defus} reads:
\begin{equation*}
    u_s(t) = \sum_{j=1}^m \sum_{l=1}^{d(\lambda_j)} i^l e^{i \lambda_j t} \int_{-T}^T \frac{(t-\tau)^{l-1}}{(l-1)!} e^{- i \lambda_j \tau} R_{-l}(\lambda_j) f(\tau) d\tau
\end{equation*}
which is manifestly polyhomogeneous. Let us denote the right-hand-side $u_f \in \mE$. We may use the pairing $( \cdot , \cdot )$ introduced in \S\ref{subsection:polysol} characterise $u_f$ by duality:
   
\begin{lem}      \label{lem:pairing}
    With the above notations, $(u_f,v) = \langle f, v \rangle$ for any $v \in \mE^*$.
\end{lem}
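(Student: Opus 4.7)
The plan is to exploit the decomposition $u = u_r + u_s$ of the solution of $Pu = f$ constructed above, together with the invariance of the pairing $(\cdot,\cdot)$ under the choice of cutoff function. Writing
\begin{equation*}
    \langle f, v \rangle = \langle P u_r, v \rangle + \langle P u_s, v \rangle,
\end{equation*}
I will show that the first term vanishes and that the second term computes $(u_f, v)$.

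For the first term, recall that $u_r$ has exponential decay (the $W^{k,p}$-norm of $e^{\delta \rho} u_r$ is finite for some $\delta > 0$), while $v \in \mE^*$ has at most polynomial growth in $t$, being a sum of exponentials with purely imaginary exponents against polynomial coefficients. Integrating over the truncated cylinder $[-N,N] \times X$ and letting $N \to \infty$, the boundary terms at $t = \pm N$ vanish by the exponential-versus-polynomial competition, so integration by parts is justified and yields $\langle P u_r, v \rangle = \langle u_r, P^* v \rangle = 0$, using $P^* v = 0$.

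For the second term, pick a cutoff function $\chi$ of the type used in the definition of $(\cdot, \cdot)$, subject to the additional requirements $\chi \equiv 0$ on $(-\infty, T]$ and $\chi \equiv 1$ on $[T+1, \infty)$. Set $w = \chi u_f - u_s$. Using the asymptotic description of $u_s$ recalled just before the statement, namely $u_s \equiv 0$ on $\{ t < -T \}$ and $u_s \equiv u_f$ on $\{ t > T \}$, one checks directly that $w$ vanishes outside the compact set $[-T, T+1] \times X$: for $t \leq -T$ both summands are zero, and for $t \geq T+1$ both equal $u_f$. Since $u_s \in W^{k,p}_{loc}$ and $u_f$ is smooth, $w$ is compactly supported of class $W^{k,p}$, so integration by parts (now trivially justified) combined with $P^* v = 0$ gives $\langle P w, v \rangle = 0$. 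Therefore
\begin{equation*}
    \langle P u_s, v \rangle \;=\; \langle P(\chi u_f), v \rangle \;=\; (u_f, v),
\end{equation*}
the last equality being the definition of the pairing together with its independence of the choice of $\chi$. Combining both steps yields $\langle f, v \rangle = (u_f, v)$. The only mildly technical point is the justification of the integration by parts in the first step, which reduces to the exponential decay of $u_r$ against the polynomial growth of $v$; the rest is a direct bookkeeping argument exploiting the asymptotic behaviour of $u_s$ established in the construction.
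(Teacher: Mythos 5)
Your proof is correct, and it takes a cleaner route than the paper's, although the underlying ingredients are the same: integration by parts using $P^* v = 0$, the exponential decay of $u_r$, and the fact that $u_s$ vanishes for $t < -T$ and coincides with $u_f$ for $t > T$. The paper works with the full solution $u = u_r + u_s$ throughout, introducing a one-parameter family of cutoffs $\chi_\tau$, proving that $\langle P\chi_\tau u, v\rangle$ is $\tau$-independent and vanishes by sending $\tau \to -\infty$, and only at the end extracting $(u_f, v)$ from the commutator term $[P, \chi_\tau]$ in the limit $\tau \to +\infty$, where the splitting $u = u_r + u_s$ is finally invoked. You instead split at the outset, kill $\langle Pu_r, v\rangle$ directly by one integration by parts (no limits over a cutoff family), and compute $\langle Pu_s, v\rangle$ exactly by exhibiting the compactly supported corrector $w = \chi u_f - u_s$. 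This avoids the two separate limiting arguments of the paper and makes the role of the asymptotics of $u_s$ more transparent. The one place where both arguments rely on the same non-trivial fact, which you flag but do not fully justify, is the boundary-term estimate in the first step: the decay statement $e^{\delta\rho} u_r \in W^{k,p}$ gives $L^p$-weighted control, and to kill the boundary terms at $t = \pm N$ one needs pointwise exponential decay of $\nabla^j u_r$ for $j \le k-1$, which follows from interior elliptic estimates and Sobolev embedding applied on $\{|t| > T\}$ where $Pu_r = 0$. The paper glosses over this in the same way, so this is not a gap in your argument relative to theirs. One small remark: it helps to observe explicitly that both $Pu_r$ and $Pu_s$ are individually compactly supported (each is supported in $[-T,T]\times X$, since $Pu_s$ is given by a closed formula involving $f$, and $Pu_r = f - Pu_s$), so that each of $\langle Pu_r, v\rangle$ and $\langle Pu_s, v\rangle$ is a well-defined integral; otherwise splitting $\langle Pu, v\rangle$ into two pieces is not immediately legitimate since $v$ grows polynomially.
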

   
\begin{proof}
    Let $\chi$ be a smooth function such that $\chi \equiv 1$ in $(-\infty, 0]$ and $\chi \equiv 0$ in $[1,\infty)$, and let $\chi_\tau(t) = \chi(t-\tau)$. For any $\tau > T$ we have the equality $\langle \chi_\tau P u, v \rangle = \langle f , v \rangle$. On the other hand, let us prove that $\langle P \chi_\tau u, v \rangle = 0$ for any $\tau \in \R$. If $\tau, \tau' \in \R$, we have:
   \begin{align*}
       \langle P \chi_\tau u, v \rangle - \langle P \chi_{\tau'} u, v \rangle & = \langle P (\chi_\tau- \chi_{\tau'}) u, v \rangle \\
       & = \langle (\chi_\tau- \chi_{\tau'}) u, P^*v \rangle = 0
   \end{align*}
   where the integration by parts is justified because $\chi_{\tau}-\chi_{\tau'}$ has compact support. Therefore the value of $\langle P \chi_\tau u, v \rangle = 0$ does not depend on $\tau$. Hence we may send $\tau$ to $-\infty$, and as $u(t)$ has exponential decay as $t \rightarrow - \infty$ we obtain $\langle P \chi_\tau u, v \rangle = 0$.
   
   It follows that $\langle f , v \rangle = - \lim_{\tau \rightarrow \infty} \langle [P,\chi_\tau]u, v \rangle$. Given the exponential decay of $u_r(t)$ and its $k$ first derivatives (in the sense explained above) as $t \rightarrow \infty$, this yields:
   \begin{equation*}
       \langle f , v \rangle = - \lim_{\tau \rightarrow \infty} \langle [P,\chi_\tau]u_s, v \rangle = \lim_{\tau \rightarrow \infty} \langle [P,1- \chi_\tau]u_s, v \rangle = (u_f, v)
   \end{equation*}
   as claimed.
\end{proof}
   
\begin{ex}       \label{ex:fvproduct}
    Consider the case of the Laplacian $\Delta_Y$ acting on $q$-forms. The singular part of the resolvent is $\lambda^{-2}p_h$, where $p_h$ is the projection on the space of harmonic forms. Thus if $\eta$ is a $q$-form on $Y$ supported in $[-T,T] \times X$ and we denote by $\xi(\tau)$ the $L^2$-projection of $\eta_\tau$ onto the space of harmonic forms, and write $\xi(\tau) = \alpha(\tau) + dt \wedge  \beta(\tau)$, we have by definition:
    \begin{align*}
        u_\eta(t) & = i^2 \int_{-T}^T (t-\tau) \xi(\tau) d\tau \\
        & = \int_{-T}^T \tau \alpha(\tau) + dt \wedge \tau \beta(\tau) d\tau - t \int_{-T}^T \alpha(\tau) + dt \wedge \beta(\tau) d\tau.
    \end{align*}
    For any $v(t) = \alpha_0 + dt \wedge \beta_0 + t(\alpha_1 + dt \wedge \beta_1) \in \mE_q$ we can use the formula of Example \ref{ex:pairing} to check:
    \begin{equation*}
        (u_\eta, v) = \int_{-T}^T \tau (\langle \alpha(\tau), \alpha_1 \rangle  + \langle \beta(\tau), \beta_1 \rangle ) + \langle \alpha(\tau), \alpha_0 \rangle  + \langle \beta(\tau), \beta_0 \rangle  d\tau = \langle \eta, v \rangle .
    \end{equation*}
\end{ex}

\section{Construction of solutions}		\label{section:construction}

In this section we explain the main construction of this paper. In \S\ref{subsection:lmt} we review the mapping properties of adapted operators on EAC manifolds. In \S\ref{subsection:characteristic} we explain our method for constructing approximate solutions of the equation $P_Tu=f$ and show that it can be reduced to a finite-dimensional linear system. In \S\ref{subsection:construction} we prove that this system admits a solution if and only if $f$ is orthogonal to the substitute cokernel defined in \S\ref{subsection:substk}. Under the restricting assumption that the indicial operator of the gluing problem has only one real root, this enables us to prove Theorem \ref{thm:main}. We also discuss other possible conditions which would yield the same result.

    \subsection{Analysis on EAC manifolds}      \label{subsection:lmt}

\paragraph{} The mapping properties of adapted operators on EAC manifolds have been studied by Lockhart--McOwen in \cite{lockhart1985elliptic}, and we will give a brief review of their theory. The right functional spaces to consider in this situation are weighted Sobolev spaces. Let $(Z,g)$ be an EAC manifold asymptotic to a cylinder $Y = \R \times X$ at infinity, $(E,h,\nabla)$ an adapted bundle, and pick a cylindrical coordinate function $\rho : Z \rightarrow \R_{>0}$. If $u$ is a smooth compactly supported section of $E$, we can define its $W^{l,p}_{\nu}$-norm ($p \geq 1$, $l \in \N$, $\nu \in \R$) as follows:
\begin{equation*}
    \| u \|_{W^{l,p}_\nu} = \sum_{ j = 0}^l \| e^{\nu \rho} \nabla^j u \|_{L^p}.
\end{equation*}
Note that for $\nu = 0$ this is just the usual $W^{l,p}$ norm. The weighted Sobolev space $W^{l,p}_\nu(E)$ can be defined as the completion of $C^{\infty}_c(E)$ with respect to the $W^{l,p}_\nu$-norm. We also denote by $C^\infty_\nu(E)$ the space of smooth sections of $E$ that have all derivatives bounded by $O(e^{-\nu \rho})$.

\paragraph{} Let $P$ be an adapted elliptic differential operator $P : C^\infty(E) \rightarrow C^\infty(F)$ of order $k$, and let $P_0 : C^\infty(\underline E_0) \rightarrow C^\infty(\underline F_0)$ be its indicial operator. The maps $W^{k+l,p}_\nu(E) \rightarrow W^{l,p}_\nu(F)$ induced by $P$ are bounded linear operators. Moreover, combining the estimates of Proposition \ref{prop:lpaprioriest} with standard interior elliptic estimates, we obtain a priori estimates of the form:
\begin{equation*}
    \| u \|_{W^{k+l,p}_\nu} \leq C \left( \| Pu\|_{W^{l,p}_\nu} + \|u\|_{L^p_\nu} \right).
\end{equation*}
Contrary to the compact case, these estimates are not enough to ensure that the maps induced by $P$ on weighted spaces are Fredholm, essentially because of the failure of the Sobolev embedding theorem between spaces with the same weight. Nevertheless, Lockhart--McOwen showed that the Fredholm property holds if and only if $\nu$ is not an indicial root of $P_0$ (see \ref{par:roots}). When $\nu \notin \mathscr D_{P_0}$ we denote by $\ind_\nu(P)$ the index of the maps induced by $P$ on spaces of weight $\nu$. The following theorem summarises the mapping properties of $P$ in weighted spaces and the index change formula as proved in \cite{lockhart1985elliptic}.

\begin{thm}	\label{thm:indexchange}
    Let $p > 1$ and $l \in \N$. Then the following holds.
    \begin{enumerate}[(i)]
	   \item The maps $W^{k+l,p} _\nu(E) \rightarrow W^{l,p}_\nu(F)$ induced by $P$ are Fredholm if and only if $\nu \notin \mathscr D_{P_0}$. In that case, the image of $P$ is the $L^2$-orthogonal complement of $\ker P^* \cap C^\infty_{-\nu}(F)$.
	   \item If $\nu < \nu'$ are not indicial roots of $P$, then the index change is given by
	\begin{equation*}
	    \ind_{\nu}(P) - \ind_{\nu'}(P) = \sum_{\nu < \im \lambda < \nu'} \dim \mE_{\lambda} .
	\end{equation*}
    \end{enumerate}
\end{thm}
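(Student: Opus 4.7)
The strategy is to reduce the statement to the analysis of translation-invariant operators on the cylinder carried out in Section \ref{section:pdecyl}. The starting observation is that multiplication by $e^{\nu\rho}$ gives a bounded isomorphism $W^{l,p}_\nu \simeq W^{l,p}_0$ (respectively $C^{l,\alpha}_\nu \simeq C^{l,\alpha}_0$) intertwining the action of $P$ with that of the conjugated operator $P^{(\nu)} := e^{\nu\rho} P e^{-\nu\rho}$. On the cylindrical end this conjugation sends the indicial operator $P_0$ to $e^{\nu t} P_0 e^{-\nu t} = P_0(D_t + i\nu)$, whose Fourier symbol at $\lambda \in \R$ is $P_0(\lambda + i\nu)$. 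Its real roots correspond exactly to roots $\mu$ of $P_0$ with $\im \mu = \nu$, so $\nu \notin \mathscr D_{P_0}$ if and only if the conjugated indicial operator has no real roots.

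First I would establish the Fredholm property in (i) when $\nu \notin \mathscr D_{P_0}$. By the previous step and Proposition \ref{prop:dzero}, $P_0^{(\nu)}$ is boundedly invertible between the unweighted Sobolev (resp. Hölder) spaces on the cylinder, hence so is $P_0$ between the corresponding weighted spaces on $Y$. To transfer this to $Z$, I would build a global parametrix $Q$ using a cutoff partition subordinate to a decomposition of $Z$ into a compact region containing $K$ and the cylindrical end: a standard interior parametrix on the compact piece, and the bounded inverse of $P_0$ on the end. The exponential decay of the coefficients of $P - P_0$ makes the remainders $QP - \Id$ and $PQ - \Id$ compact on weighted spaces, since Rellich--Kondrachov handles the compact piece while the exponential decay provides compactness on the end. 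Conversely, when $\nu \in \mathscr D_{P_0}$ one produces an infinite-dimensional family of approximate cokernel elements by localising polyhomogeneous sections with rate $\lambda$ satisfying $\im \lambda = \nu$, which forces the failure of the Fredholm property. The image description in (i) then follows from standard $L^2$-duality: the pairing identifies $(W^{l,p}_\nu)^*$ with $W^{-l,q}_{-\nu}$ for $q$ the conjugate exponent, so $\coker P$ is represented by elements of $\ker P^*$ in this dual space; since $-\nu \notin \mathscr D_{P_0^*}$ (the roots of $P_0^*$ being the complex conjugates of those of $P_0$), elliptic regularity for the adapted operator $P^*$ lifts any such representative to $C^\infty_{-\nu}(F)$.

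The hard part will be the index change formula in (ii). By telescoping it suffices to treat a single crossing, namely $(\nu, \nu') \cap \mathscr D_{P_0} = \{\nu_0\}$, and to show the index jumps by $\sum_{\im \lambda = \nu_0} \dim \mE_\lambda$. My plan is to track separately the change in kernel and cokernel through a polyhomogeneous expansion. Given $u \in \ker P \cap W^{k+l,p}_\nu$, applying the local right inverse of Theorem \ref{thm:localsol} (after suitable weight conjugation, and using the explicit form of the singular resolvent contributions derived in \S\ref{subsection:polysol}) to the equation $P_0 u = -(P - P_0) u$ on the end produces an asymptotic decomposition $u \sim u_{\mathrm{poly}} + u_{\mathrm{decay}}$, with $u_{\mathrm{poly}} \in \bigoplus_{\im \lambda = \nu_0} \mE_\lambda$ and $u_{\mathrm{decay}} \in W^{k+l,p}_{\nu'}$. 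This defines an injective linear map $\ker P|_\nu / \ker P|_{\nu'} \hookrightarrow \bigoplus_{\im \lambda = \nu_0} \mE_\lambda$, and a parallel analysis applied to $P^*$ at weight $-\nu_0$ governs the change of cokernel. The most delicate point is identifying the images of these asymptotic maps precisely: the non-degenerate pairing $(\cdot, \cdot)$ from \S\ref{subsection:polysol} provides the algebraic duality relating kernel contributions for $P$ and cokernel contributions for $P^*$, and a careful bookkeeping (tracking polynomial orders and multiplicities of each root $\lambda$ through the iterated parametrix) ensures the counts balance to the stated total.
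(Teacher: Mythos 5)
The paper does not prove this theorem: the sentence immediately preceding it reads ``the mapping properties of $P$ in weighted spaces and the index change formula \emph{as proved in} \cite{lockhart1985elliptic},'' so it is stated as a recall of Lockhart--McOwen's results, not as something established internally. There is therefore no in-paper proof against which to compare your attempt.

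Evaluating your sketch on its own terms, the overall route (conjugation by $e^{\nu\rho}$ to reduce to the unweighted case, Proposition \ref{prop:dzero} for the model operator, a patched parametrix, and then a kernel/cokernel bookkeeping through polyhomogeneous expansions and the pairing of \S\ref{subsection:polysol}) is the standard one and is correct in broad outline. Two places deserve more care. For the Fredholm property, the claim that exponential decay of $P - P_0$ ``makes the remainders compact'' is too quick: the remainder term $\chi_0(P-P_0)Q_0\chi_0'$ has order $k$ and does not gain a derivative, so the inclusion $W^{l,p}_{\nu+\mu}\hookrightarrow W^{l,p}_\nu$ alone does not give compactness; one needs to split $P - P_0$ into a compactly supported part (compact by Rellich after the parametrix smoothing) and a tail of small operator norm, exhibiting the remainder as a norm limit of compact operators. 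For the index change, the genuine content is hidden in your phrase ``careful bookkeeping''. The jump of $\ker P$ and $\coker P$ across the single indicial level $\nu_0$ is captured by the asymptotic maps $\kappa:\ker_\nu P\to\bigoplus_{\im\lambda=\nu_0}\mE_\lambda$ and $\kappa^*:\ker_{-\nu'}P^*\to\bigoplus_{\im\lambda=\nu_0}\mE^*_\lambda$, and one must prove that $\image\kappa$ is precisely the annihilator of $\image\kappa^*$ under the non-degenerate pairing $(\cdot,\cdot)$. This equality, not merely the inclusion (which follows from the analogue of Lemma \ref{lem:valuepuv}), is what makes the count equal $\dim\mE_{\nu_0}$; its proof requires solving $Pu = Pv_1$ with faster decay whenever $v_1$ is asymptotic to a class orthogonal to $\image\kappa^*$, exactly because the obstructions then vanish. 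Note also a minor logical point: the paper \emph{deduces} $\image\kappa=(\image\kappa^*)^\perp$ (cf.\ equation \eqref{eq:dimkappa} and Lemma \ref{lem:valuepuv}) as a \emph{consequence} of the index-change formula, whereas your argument reverses the implication; this is fine as a proof strategy, but it means you must establish the annihilator equality independently rather than citing that discussion.
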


By elliptic regularity, the solutions of the homogeneous equation $Pu = 0$ are smooth. An important property of solutions with sub-exponential growth is that they have a polyhomogeneous expansion at infinity. More precisely, if $0 < \nu' - \nu < \mu$ and $\nu, \nu \notin \mathscr D_P$, then the following holds. For any $u \in C^\infty_\nu$ such that $Pu=0$, there exists $u' \in C^\infty_{\nu'}$ such that when $\rho \rightarrow \infty$, the difference $u-u'$ is an element of $\bigoplus_{\nu < \im \lambda < \nu'} \mE_{\lambda}$ under the usual identification of the domain $\{\rho > 1 \}$ with the cylinder $(1, \infty) \times X$.

From now on, let us assume that $0$ is an indicial root of $P_0$, and let:
\begin{equation}
    \sigma = \min \{ \mu, \min_{\nu \in \mathscr D_{P_0} \backslash \{0\}} | \nu | \}
\end{equation}
Take any $\delta \in (0, \sigma)$. Recall that we defined $\mK$ as the kernel of $P$ acting on sections with sub-exponential growth, and $\mK_0$ the kernel of $P$ acting on decaying sections. In particular, $\mK$ is the kernel of $P$ acting on $W^{k,p}_{-\delta}(E)$ and $\mK_0$ the kernel of the action of $P$ on $W^{k,p}_{\delta}(E)$. In \S\ref{subsection:substk} we defined a map $\kappa : \mK \rightarrow \mF$ such that any element $v \in \mK$ is asymptotic to $\kappa(v)$. Hence $\mK_0$ is the kernel of $\kappa$. Similarly we defined $\mK^*$, $\mK_0^*$ and $\kappa^* : \mK^* \rightarrow \mF^*$. Let us point out that the index change formula in Theorem \ref{thm:indexchange} implies:
\begin{equation}		\label{eq:dimkappa}
    \dim \image \kappa + \dim \image \kappa^* = \dim \mF.
\end{equation}

\paragraph{} We want to study equations of the type $Pu = f$ when $f$ has exponential decay, say $f \in L^p_\delta$. By Theorem \ref{thm:indexchange}, the obstructions to solve this equation for $u \in W^{k,p}_{\delta}$ lie in $\mK^*$, whereas the obstructions to solve it in $W^{k,p}_{-\delta}$ lie in $\mK^*_0$. Here, we want to use the pairing defined in \S\ref{subsection:polysol} to give a precise description of these obstructions and of the asymptotic behaviour of solutions in $W^{k,p}_{-\delta}$.

Let $v \in \mK^*$ be asymptotic to $\kappa^*(v) = v_0 \in \mE^*$ and consider $u \in C^\infty(E)$ asymptotic to $u_0 \in \mE$, such that $u - u_0$ and all their derivatives are exponentially decaying as $\rho \rightarrow \infty$. The $L^2$ product $\langle Pu, v \rangle$ is well-defined as $Pu$ decays exponentially. It turns out that its value only depends on the asymptotic data. More precisely we claim that:

\begin{lem}		\label{lem:valuepuv}
    With the above notations, $\langle Pu, v \rangle = (u_0, v_0)$.
\end{lem}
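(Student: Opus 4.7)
The plan is to transform $\langle Pu, v \rangle_Z$ into an integral supported on a bounded annulus deep in the cylindrical end via integration by parts, and then identify the result with the cylindrical pairing $(u_0, v_0)$. Concretely, let $\chi : \R \to [0,1]$ be the cutoff used in the definition of the pairing, set $\chi_T(t) = \chi(t-T)$ for $T > 0$, and define a smooth cutoff $\psi_T : Z \to [0,1]$ by $\psi_T = 1 - \chi_T(\rho)$ on the end $\{\rho \geq 1\}$ and $\psi_T \equiv 1$ elsewhere, so that $\psi_T$ is compactly supported with $d\psi_T$ concentrated in a bounded annulus $A_T$ around $\{\rho = T\}$. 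Since $P_0 u_0 = 0$ and both $u - u_0$ and the coefficients of $P - P_0$ decay exponentially, $Pu$ decays exponentially, while $v$ grows at most polynomially, so dominated convergence yields
\[
\langle Pu, v\rangle_Z = \lim_{T \to \infty} \langle Pu, \psi_T v\rangle_Z.
\]
Since $\psi_T v$ has compact support and $P^* v = 0$, integration by parts gives
\[
\langle Pu, \psi_T v\rangle_Z = \langle u, P^*(\psi_T v)\rangle_Z = \langle u, [P^*, \psi_T] v\rangle_Z,
\]
an integral supported on $A_T$.

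For $T$ large, $A_T$ lies in the cylindrical end, where we may substitute the asymptotic models. Writing $u = u_0 + u'$, $v = v_0 + v'$ and $P^* = P_0^* + R$ with $u'$, $v'$ and the coefficients of $R$ all bounded by $O(e^{-\mu t})$ together with their derivatives, a direct expansion produces
\[
\langle u, [P^*, \psi_T] v\rangle_Z = \langle u_0, [P_0^*, \psi_T] v_0\rangle_Y + O(e^{-\delta T})
\]
for any $\delta < \mu$, where $Y = \R \times X$ and we extend $\psi_T$ to $Y$ by $1 - \chi_T$. Since $\psi_T = 1 - \chi_T$ near the support of the commutator, $[P_0^*, \psi_T] = -[P_0^*, \chi_T]$ there. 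Evaluating $(u_0, v_0)$ using $\chi_T$ in place of $\chi$ (permissible since the pairing is cutoff-independent) and exploiting $P_0 u_0 = 0$, followed by integration by parts, we get
\[
(u_0, v_0) = \int_\R \langle [P_0, \chi_T] u_0, v_0\rangle\, dt = -\int_\R \langle u_0, [P_0^*, \chi_T] v_0\rangle\, dt = \langle u_0, [P_0^*, \psi_T] v_0\rangle_Y,
\]
an equality holding for every $T$. Combining the two previous displays and letting $T \to \infty$ yields $\langle Pu, v\rangle_Z = (u_0, v_0)$.

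The main technical point is the error estimate in the second display above. Each term produced by one of the substitutions $u \to u_0$, $v \to v_0$, $P^* \to P_0^*$ pairs an exponentially decaying factor of order $O(e^{-\mu t})$ with factors of at most polynomial growth in $t$ (coming from the polyhomogeneous limits and the bounded coefficients of $[P_0^*, \psi_T]$), integrated over $A_T$ which has bounded length in the $t$ direction. Hence each correction is $O(e^{-\delta T})$ for any $\delta < \mu$, and the finite sum of such corrections vanishes as $T \to \infty$. Everything else reduces to bookkeeping with commutators and a standard integration by parts on a compact region.
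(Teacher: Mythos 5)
Your proof is correct, but it is not the same argument as in the paper. The paper's proof applies the cutoff to the \emph{first} argument: it notes that for all $\tau$, $\langle Pu,v\rangle = \langle P\chi_\tau(\rho) u,v\rangle$ exactly, because $\langle P(1-\chi_\tau(\rho))u,v\rangle = \langle (1-\chi_\tau(\rho))u, P^*v\rangle = 0$; then it sends $\tau\to\infty$, substitutes the asymptotic models $u\to u_0$, $v\to v_0$, $P\to P_0$, and recognizes the limit as $\langle P_0\chi_\tau u_0,v_0\rangle$, which equals $(u_0,v_0)$ directly by the definition of the pairing. You instead apply the cutoff to the \emph{second} argument and work with commutators: $\langle Pu,v\rangle = \lim_T\langle Pu,\psi_T v\rangle = \lim_T\langle u,[P^*,\psi_T]v\rangle$, then substitute asymptotic models to localize on the annulus, and finally prove the auxiliary identity $(u_0,v_0) = \langle u_0,[P_0^*,\psi_T]v_0\rangle_Y$ via a separate integration by parts on the cylinder. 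Both routes work; the paper's is marginally slicker since its key localizing identity is exact in $\tau$ and plugs straight into the definition of the pairing, whereas yours requires a limiting step and an extra commutator identity. Your integration by parts $\int\langle [P_0,\chi_T]u_0,v_0\rangle = -\int\langle u_0,[P_0^*,\chi_T]v_0\rangle$ is legitimate precisely because the commutator has coefficients supported in a compact annulus, so boundary terms vanish despite $u_0,v_0$ having only polynomial growth; this is worth making explicit. One small imprecision: the decay of $u-u_0$, $v-v_0$ is $O(e^{-\delta t})$ for $\delta < \sigma$ (which depends on the spectral gap and $\mu$), not necessarily $O(e^{-\mu t})$, but this does not affect the argument.
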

  
\begin{proof}
    Let $\chi : \R \rightarrow \R$ be a smooth function such that $\chi \equiv 0$ in $(-\infty, 0]$ and $\chi \equiv 1$ in $[1,\infty)$, and let $\chi_\tau(t) = \chi(t-\tau)$ for $\tau \in \R$. Then for any $\tau \geq 1$ we have:
    \begin{align*}
        \langle Pu, v \rangle &= \langle P \chi_\tau(\rho) u, v \rangle + \langle P(1-\chi_\tau(\rho)) u, v \rangle \\
        &= \langle P \chi_\tau(\rho) u, v \rangle + \langle (1-\chi_\tau(\rho)) u, P^* v \rangle \\
        &= \langle P\chi_\tau(\rho) u, v \rangle 
    \end{align*}
    since $P^*v = 0$. Thus $\langle Pu, v \rangle = \lim_{\tau \rightarrow \infty} \langle P\chi_\tau(\rho) u, v \rangle$. As $u-u_0$, $v-v_0$ and the coefficients of $P-P_0$ decay exponentially as $\rho \rightarrow \infty$, as well as all derivatives, this implies:
    \begin{equation*}
        \langle Pu, v \rangle = \lim_{\tau \rightarrow \infty} \langle P_0\chi_\tau u_0, v_0 \rangle = (u_0,v_0)
    \end{equation*}
    since $\langle P_0\chi_\tau u_0, v_0 \rangle = (u_0,v_0)$ for any $\tau \in \R$.
\end{proof}
  
As a consequence of this lemma, $\image \kappa$ and $\image \kappa^*$ are orthogonal for the pairing $( \cdot, \cdot)$. Together with equality \eqref{eq:dimkappa}, this implies that $\image \kappa$ is exactly the orthogonal space of $\image \kappa^*$ for the pairing $( \cdot, \cdot)$. 

Let us denote by $\mK^*_+$ the subspace of $\mK$ orthogonal to $\mK^*_0$ for the $L^2$-product, so that $\kappa^*$ induces an isomorphism between $\mK^*_+$ and $\image \kappa^*$. We also choose an arbitrary complement $\mF_0$ of $\image \kappa$ in $\mE$. Let $m = \dim \image \kappa^*$. Pick smooth sections $h_1, \ldots ,h_m$ which are asymptotic to a basis of $\mF_0$ at infinity, with the difference and all their derivatives exponentially decaying, and denote by $\mF \subset C^\infty(E)$ the vector space they span. By Lemma \ref{lem:valuepuv} we may choose a basis $g_1, \ldots ,g_m$ of $\mK^*_+$ such that $\langle Ph_i, g_j \rangle = \delta_{ij}$  for all $1 \leq i,j \leq m$.

Let $f \in L^p_\delta$ be a section of $F$, and $w$ be the $L^2$-projection of $f$ onto $\mK^*_0$. Let us write:
\begin{equation*}
    f = f^\prime + \sum_{j = 1}^m \langle f, g_j \rangle P h_j + w.
\end{equation*}
where $f^\prime \in L^p_{\delta}$ is  by construction orthogonal to the obstruction space $\mK^*$. As $| \langle f, g \rangle | \leq C \| f \|_{L^p_\delta} \| g \|_{L^q_{-\delta}}$ for any $g \in \mK^*$, where $q$ is the conjugate exponent of $p$ and $C >0$ is some constant, we have $\| f^\prime \|_{L^p_{\delta}} \leq C^\prime \| f \|_{L^p_{\delta}}$ for some universal constant $C^\prime > 0$. By Theorem \ref{thm:indexchange} there exists $u^\prime$ such that $Pu^\prime = f^\prime$ and $\| u^\prime \|_{W^{k,p}_{\delta}} \leq C^{\prime\prime} \|f^\prime\|_{L^p_\delta}$. This proves the following:

\begin{prop}		\label{prop:lmeps}
    For any $p > 1$ and $0 < \delta < \sigma$, there exists a constant $C > 0$, depending only on $p$ and  $\delta$, such that the following holds. Let $f \in L^p_\delta(F)$, and let $w$ by its $L^2$-projection onto $\mK_0$. Then there exists a section  $u^\prime \in W^{k,p}_{\delta}(E)$ with $\| u^\prime \|_{W^{k,p}_{\delta}} \leq C \| f\|_{L^p_\delta}$ and such that
    \begin{equation*}
        P \left( u^\prime +  \sum_{j=1}^m \langle f, g_j \rangle h_j \right) = f - w.
    \end{equation*}
\end{prop}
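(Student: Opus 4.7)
The strategy is to carry out exactly the decomposition announced in the paragraph preceding the statement: given $f \in L^p_\delta(F)$, set $w$ to be its $L^2$-projection onto $\mK^*_0$ and put
\begin{equation*}
    f' = f - \sum_{j=1}^m \langle f, g_j \rangle P h_j - w,
\end{equation*}
so that the conclusion will follow by finding $u' \in W^{k,p}_\delta(E)$ with $P u' = f'$ and then rearranging. Three ingredients have to be checked: that $f' \in L^p_\delta$ with norm controlled by $\|f\|_{L^p_\delta}$, that $f'$ is $L^2$-orthogonal to the whole obstruction space $\mK^*$, and that Theorem \ref{thm:indexchange} (i) applies to produce $u'$ together with the required bound.

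For the first point, the terms subtracted from $f$ all lie in $L^p_\delta$: $w \in \mK^*_0$ decays exponentially (with all derivatives) at every rate $< \sigma$, so $\|w\|_{L^p_\delta}$ is finite and controlled by $|\langle f, \phi_i\rangle|$ for an $L^2$-orthonormal basis $\phi_i$ of $\mK^*_0$, which in turn is bounded by $\|f\|_{L^p_\delta}\|\phi_i\|_{L^q_{-\delta}}$ via weighted H\"older's inequality. The sections $P h_j$ decay like $O(e^{-\sigma \rho})$ because writing $h_j = h_j^{as} + r_j$ with $h_j^{as} \in \mF_0 \subset \mE$ and $r_j$ exponentially decaying, we get $P h_j = (P-P_0) h_j^{as} + P r_j$, both of which have exponential decay at any rate $< \sigma$ (the first from the asymptotic rate $\mu$ of $P$, the second because $P$ is bounded on weighted spaces). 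Combined with $|\langle f, g_j\rangle| \leq \|f\|_{L^p_\delta}\|g_j\|_{L^q_{-\delta}}$ (which is finite since $g_j$ has sub-exponential growth), this yields $\|f'\|_{L^p_\delta} \leq C\|f\|_{L^p_\delta}$.

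For the orthogonality, split $\mK^* = \mK^*_0 \oplus \mK^*_+$. Against any $w' \in \mK^*_0$, use that $\langle P h_j, w'\rangle = \langle h_j, P^* w'\rangle = 0$ by an integration by parts justified by the exponential decay of $w'$, and that $\langle f - w, w'\rangle = 0$ by definition of the projection; hence $\langle f', w'\rangle = 0$. Against $g_i \in \mK^*_+$, the key identity $\langle P h_j, g_i\rangle = \delta_{ij}$ (built into the choice of $h_j$ and $g_j$) together with $\langle w, g_i\rangle = 0$ (since $\mK^*_+ \perp \mK^*_0$ and $w$ decays so the pairing is genuine $L^2$) gives $\langle f', g_i\rangle = \langle f, g_i\rangle - \langle f, g_i\rangle = 0$.

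Finally, since $f' \in L^p_\delta$ is $L^2$-orthogonal to $\mK^* = \ker P^* \cap C^\infty_{-\delta}(F)$, Theorem \ref{thm:indexchange} (i) applied with $\nu = \delta \notin \mathscr D_{P_0}$ yields some $u' \in W^{k,p}_\delta(E)$ with $P u' = f'$, and the open mapping theorem (applied to the Fredholm map $P : W^{k,p}_\delta \to L^p_\delta$ modulo its finite-dimensional kernel) provides $\|u'\|_{W^{k,p}_\delta} \leq C \|f'\|_{L^p_\delta} \leq C'\|f\|_{L^p_\delta}$. Adding $\sum_j \langle f, g_j\rangle h_j$ to $u'$ and applying $P$ reproduces $f' + \sum_j \langle f, g_j\rangle P h_j = f - w$. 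The main subtlety throughout is making sure all pairings involving $g_j$ and $h_j$ (which may only have polynomial growth) are convergent and behave well under integration by parts; this is where the choice of $\delta < \sigma$ and the exponential decay of $w$, $r_j$ and $(P - P_0)$ are essential.
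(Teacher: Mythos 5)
Your proof is correct and follows essentially the same route as the paper: decompose $f$ as $f' + \sum_j \langle f, g_j\rangle P h_j + w$, verify that $f' \in L^p_\delta$ with controlled norm and is orthogonal to $\mK^*$, then invoke Theorem \ref{thm:indexchange}(i) to produce $u'$ with the weighted bound. The paper states this decomposition and bound more tersely in the paragraph preceding the proposition, while you supply the supporting details (the exponential decay of $P h_j$ via $P_0 h_j^{\mathrm{as}} = 0$, the two orthogonality checks against $\mK^*_0$ and $\mK^*_+$, the open mapping argument for the norm bound), all of which are sound; you also correctly read the statement's $\mK_0$ as $\mK^*_0$, matching the discussion around it.
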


    \subsection{Characteristic system}      \label{subsection:characteristic}

\paragraph{} In the same setup as Section \ref{section:setup}, we now consider the gluing problem of two adapted operators $P_1$, $P_2$ of order $k$ on EAC manifolds $Z_1$, $Z_2$. For the present discussion there are no restrictions on the real roots of the indicial operator $P_0$. By definition, there is a compact $K_1 \subset Z_1$ and an orientation-preserving diffeomorphism $\phi_1 : (0, \infty) \times X \rightarrow Z_1 \backslash K_1$, and we picked a positive cylindrical coordinate function $\rho_1$ on $Z_1$ such that $\rho_1(\phi_1(t,x)) = t$ when $t \geq 1$ and $\rho_1 < 1$ everywhere else in $Z_1$. As in Section \ref{section:setup} we fix a cutoff function $\chi : \R \rightarrow [0,1]$ such that $\chi \equiv 0$ in $(-\infty, - \frac 1 2 ]$ and $\chi \equiv 1$ in $[\frac 1 2, \infty)$. For $\tau \in \R$ we keep our usual notation $\chi_\tau(t)= \chi(t-\tau)$. It will be convenient to introduce a family $\zeta^1_\tau : Z_1 \rightarrow [0,1]$ of cutoff functions for the construction. For $\tau \geq 0$ define:
\begin{equation*}
    \zeta^1_\tau(z) =
    \begin{cases}
        0 & \text{if} ~~ z \in K_1 \\
        \chi(t-\tau - \frac 12) & \text{if} ~~ z = \phi_1(t,x), ~ (t,x) \in (0,\infty) \times X
    \end{cases}.
\end{equation*}
We similarly define a family of cutoff functions on $Z_2$, denoted by $\zeta^2_\tau$ for $\tau \geq 0$. Consider now the compact manifold $M_T$ obtained by gluing the compact domains $\{\rho_1 \leq T+2 \} \subset Z_1$ and $\{\rho_2 \leq T+2 \} \subset Z_2$ along the annulus $\{T \leq \rho_i \leq T+2 \} $. We can define a family of cutoff functions $\zeta_\tau : M_T \rightarrow [0,1]$ for $0 \leq \tau \leq T$ by patching together $\zeta^1_\tau$ with $\zeta^2_\tau$ in the following way:
\begin{equation*}
    \zeta_\tau \equiv
    \begin{cases}
        \zeta^1_\tau & \text{if} ~ \rho_T \leq 0 \\
        \zeta^2_\tau & \text{if} ~ \rho_T \geq 0
    \end{cases}.
\end{equation*}
Note that the support of $\zeta_\tau$ is diffeomorphic to the finite cylinder $[-T-1+\tau, T+1-\tau] \times X$.

We now turn to the gluing problem of two adapted operators $P_i : C^\infty(E_i) \rightarrow C^\infty(F_i)$ as described in \S\ref{subsection:gluingpb}. Our goal is to prove that we can construct solutions of the equation $P_T u = f$ for $f$ taking values in a complement of the substitute cokernel introduced in \S\ref{subsection:substk}. We shall do this by considering three regions in $M_T$: the neck region $\{ |\rho_T| \leq T \}$ for which our main tool is Theorem \ref{thm:localsol}, and the two compact regions $\{\rho_T \leq 0 \}$ and $\{ \rho_T \geq 0 \}$, for which we will use weighted analysis on $Z_1$ and $Z_2$ in the form of Proposition \ref{prop:lmeps}. The crucial point of the construction is to understand the interactions between these three regions, especially in terms of the obstructions to solving the equation $P_i u = f$ on each $Z_i$. Using the pairing $(\cdot,\cdot)$ defined in \S\ref{subsection:polysol} in order to implicitly keep track of these obstructions, we will be able to essentially reduce this problem to a finite-dimensional linear system.
 
\paragraph{} From now on we fix $p > 1$ and work with Sobolev spaces $W^{l,p}$. Let $f \in L^p(F_T)$ be an arbitrary section. We may identify the section $\zeta_1 f$ with a section of the translation-invariant vector bundle $\underline F_0$ over the cylinder $Y = \R \times X$, which we denote by $f_0$. Moreover, the essential support of $f_0$ is contained in the finite cylinder $[-T,T] \times X$. Note that the Sobolev norm of sections supported in the neck region of $M_T$ and the Sobolev norm of sections supported in the finite cylinder $[-T,T] \times X$ are equivalent. Hence, we have a bound:
\begin{equation*}
    \|f_0\|_{L^p} \leq C \|f\|_{L^p} .
\end{equation*}
By Theorem \ref{thm:localsol}, the operator $P_0$ admits a right inverse $Q_0 : L^p_{c}(\underline F_0) \rightarrow W^{k,p}_{loc}(\underline E_0)$. Thus we can define $u_0 = Q_0 f_0$, which satisfies $P_0 u_0 = f_0$. Using the cutoff function $\zeta_0$ to identify $\zeta_0 u_0$ with a section of $E_T \rightarrow M_T$, one has:
\begin{align*}
    f-P_T\zeta_0 u_0 & = f- [P_T,\zeta_0]u_0 - \zeta_0 P_T u_0 \\
    & = (1-\zeta_1) f - [P_T,\zeta_0]u_0 - \zeta_0 (P_T-P_0) u_0.
\end{align*}
Note that the section $(1- \zeta_1) f - [P_T,\zeta_0]u_0$ is supported in the compact region $\{|\rho_T| \geq T \}$. Moreover the operator $\zeta_0 (P_T-P_0)$ vanishes in the region $\{ |\rho_T | \leq \frac 1 2 \}$ so that we may write:
\begin{equation}
    f-P_T\zeta_0 u_0 = f_1 + f_2
\end{equation}
where $f_1 = \chi(\rho_T) (f-P_T\zeta_0 u_0)$ can be identified with a section of $F_1$ supported in $\{ \rho_1 \leq T+1 \} \subset Z_1$, and $f_2 = (1-\chi(\rho_T)) (f-P_T\zeta_0 u_0)$ can be identified with a section of $F_2$ over $\{ \rho_2 \leq T+1 \} \subset Z_2$. Both sections are $L^p$-bounded. 

From now on we fix some $\delta \in (0, \sigma)$. As the coefficients of $P_i - P_0$ and all their derivatives have exponential decay as $\rho_i \rightarrow \infty$, the $L^p$-norm of $f$ controls the $L^p_\delta$-norms of $f_1$ and $f_2$.  More precisely, the following estimates hold:

\begin{lem}     \label{lem:boundfi}
    Let $d$ be the maximal order of the real roots of $P_0$. Then there exists a constant $C > 0$ such that:
    \begin{equation*}
        \|f_i\|_{L^p_\delta} \leq C T^{d} \| f \|_{L^p},  ~~~~ i = 1,2.
    \end{equation*}
\end{lem}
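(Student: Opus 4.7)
The plan is to exploit the three-term decomposition
\begin{equation*}
    f - P_T \zeta_0 u_0 = (1-\zeta_1)f - [P_T, \zeta_0] u_0 - \zeta_0 (P_T - P_0) u_0
\end{equation*}
derived immediately above the statement, and to bound each summand separately in $L^p_\delta(Z_i)$. Multiplication by $\chi(\rho_T)$ or $1-\chi(\rho_T)$, which is bounded in sup-norm by $1$, does not affect the estimates, so it suffices to control the three summands.

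The first two summands are of compact-support type. The factor $1-\zeta_1$ is supported in the region $\{\rho_i \leq 2\} \subset Z_i$, on which the weight $e^{\delta \rho_i}$ is bounded by a fixed constant, so this piece contributes $O(\|f\|_{L^p})$. The commutator $[P_T, \zeta_0]$ is a differential operator of order $k-1$ whose coefficients are supported in the fixed compact region $\{\rho_i \in [0,1]\}$; the weight is again uniformly bounded there, so one only needs to control $\|u_0\|_{W^{k-1, p}}$ on a fixed piece of the neck. As $f_0 = \zeta_1 f$ has essential support in $[-T, T] \times X$ with $\|f_0\|_{L^p} \leq \|f\|_{L^p}$, Theorem \ref{thm:localsol} gives
\begin{equation*}
    \|u_0\|_{W^{k,p}([-T, T] \times X)} \leq C T^d \|f\|_{L^p},
\end{equation*}
which bounds the commutator contribution by $C T^d \|f\|_{L^p}$.

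The main work is the perturbation term $\zeta_0(P_T - P_0)u_0$. By construction of $P_T$ on each half, $P_T - P_0$ is of the form $(1-\chi_T(\rho_i))(P_i - P_{i,0})$ in the cylinder variables, whose coefficients and all their derivatives decay pointwise at rate $O(e^{-\mu \rho_i})$. Consequently,
\begin{equation*}
    e^{\delta \rho_i}\, |\zeta_0(P_T - P_0) u_0| \leq C\, e^{(\delta - \mu) \rho_i} \sum_{j \leq k} |\nabla^j u_0|,
\end{equation*}
and since $0 < \delta < \sigma \leq \mu$, the exponential factor is bounded by $1$ on $\{\rho_i \geq 0\}$. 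Taking $L^p$-norms, the right-hand side is controlled by $\|u_0\|_{W^{k,p}(\operatorname{supp} \zeta_0)}$, which by Theorem \ref{thm:localsol} combined with interior regularity (Proposition \ref{prop:fininteaprioriest}) on a slightly enlarged window $(-T-1,T+1) \times X$ is again of order $C T^d \|f\|_{L^p}$. Summing the three contributions gives the claim.

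The principal obstacle is essentially bookkeeping: $u_0$ is naturally defined on $\R \times X$, whereas the weighted norm is measured on $Z_i$, so one has to verify that the identifications between $\underline E_0$, $E_T$ restricted to the neck, and $E_i$ restricted to the end of $Z_i$ preserve $W^{k,p}$-norms up to constants uniform in $T$. This follows from the construction of $g_T$, $h_T$ and $\nabla_T$ as convex combinations of the building-block data and the translation-invariant models, whose mutual discrepancies on the neck are of order $e^{-\mu T}$ and hence absorbed into the uniform constants of Proposition \ref{prop:unifapriori}.
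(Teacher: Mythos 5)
Your proof is correct and follows essentially the same route as the paper: you use the same three-term splitting of $f - P_T\zeta_0 u_0$, bound the first two (compactly supported) pieces directly and via the $CT^d$ estimate of Theorem~\ref{thm:localsol}, and for the third piece absorb the $e^{\delta\rho_i}$ weight into the $O(e^{-\mu\rho_i})$ decay of the coefficients of $P_T - P_0$ using $\delta < \sigma \le \mu$. Your added remarks about $[P_T,\zeta_0]$ having order $k-1$ and about the uniform compatibility of the identifications between $\underline E_0$, $E_T|_{\text{neck}}$ and $E_i$ are sound and slightly more explicit than the paper's treatment, but do not constitute a different argument.
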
 

\begin{proof}
    Let us prove the estimate for $f_1$, which can be written as:
    \begin{equation*}
        f_1 = (1-\chi(\rho_T))((1-\zeta_1) f - [P_T, \zeta_0]u_0 - \zeta_0(P_T-P_0)u_0).
    \end{equation*}
    The term $(1-\chi(\rho_T))(1-\zeta_1) f$ is supported in the compact region $\{ \rho_1 \leq 2 \} \subset Z_1$ and therefore satisfies:
    \begin{equation*}
        \|(1-\chi(\rho_T))(1-\zeta_1) f \|_{L^p_\delta} \leq e^{2\delta} \|f\|_{L^p}
    \end{equation*}
    since the function $(1-\chi(\rho_T))(1-\zeta_1)$ is bounded by $1$. On the other hand, the second term $(1-\chi(\rho_T)) [P_T, \zeta_0]u_0$ is supported in $\{ \rho_1 \leq 1 \}$, and the $W^{k,p}$-norm of $u_0$ in the cylinder $[-T-1,T+1] \times X$ is bounded by $CT^d \|f\|$ for some constant $C$. As $\zeta_0$ and all its derivatives are uniformly bounded independently from $T$, this yields an estimate:
    \begin{equation*}
        \|(1-\chi(\rho_T)) [P_T, \zeta_0]u_0 \|_{L^p} \leq C^\prime T^d \|f\|_{L^p}.
    \end{equation*}
    For the last term $(1-\chi(\rho_T)) \zeta_0(P_T-P_0)u_0$, we can use the bound on the $W^{k,p}$-norm of $u_0$ and the exponential decay of the coefficients of $P_1-P_0$ and all their derivatives to obtain a similar bound:
    \begin{equation*}
        \|(1-\chi(\rho_T)) \zeta_0(P_T-P_0)u_0\|_{L^p} \leq C^{\prime\prime} T^d \|f\|_{L^p}.
    \end{equation*}
    These three bounds prove the lemma.
\end{proof}

\paragraph{} Next we want to understand the obstructions to solving $P_i u_i = f_i$ with $u_i \in W^{k,p}_\delta(E_i)$. Let us denote by $\langle \cdot, \cdot \rangle_0$ the $L^2$-product on the cylinder $\R \times X$ equipped with its translation-invariant metric. The key result is the following:

\begin{lem}     \label{lem:obst}
    Choose arbitrary norms on $\mK_1^*$ and $\mK_2^*$. For $T \rightarrow \infty$ the following holds. If $g_1 \in \mK_1^*$ and $g_{1,T}(t) = \kappa^*_1[g_1](t+T+1)$, then:
    \begin{equation*}
        \langle f_1, g_1 \rangle =  \langle f,(1-\chi_{T+1}(\rho_1)) g_1 \rangle - \langle (1-\chi) f_0, g_{1,T}\rangle_0  + O \left( e^{-\delta T} \|f\|_{L^p} \|g_1\| \right) .
    \end{equation*}
    If $g_2 \in \mK_2^*$ and $g_{2,T} = \kappa^*_2[g_2](t-T-1)$ then:
    \begin{equation*}
        \langle f_2, g_2 \rangle =  \langle f, (1-\chi_{T+1}(\rho_2))g_2 \rangle + \langle (1-\chi) f_0, g_{2,T}\rangle_0  + O \left( e^{-\delta T} \|f\|_{L^p} \|g_2\| \right) .
    \end{equation*}
\end{lem}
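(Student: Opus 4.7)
The plan is to expand $f_1 = \chi(\rho_T)(f - P_T\zeta_0 u_0)$ using the algebraic identity
\begin{equation*}
    f - P_T\zeta_0 u_0 = (1-\zeta_1) f - [P_T,\zeta_0] u_0 - \zeta_0(P_T - P_0) u_0,
\end{equation*}
which follows from $P_0 u_0 = f_0 = \zeta_1 f$ (Theorem \ref{thm:localsol}) combined with the fact that $\zeta_0 \equiv 1$ on $\supp \zeta_1$. Inserted into $\langle f_1, g_1 \rangle$, this expresses the pairing as a sum of three pieces which I analyze separately.

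I first dispose of $\langle \chi(\rho_T)\zeta_0(P_T-P_0)u_0, g_1 \rangle$, which is a pure error term. On the $Z_1$-side one has $P_T - P_0 = (1-\chi_T(\rho_1))(P_1-P_0)$, whose coefficients and all their derivatives are $O(e^{-\mu\rho_1})$ and supported in $\rho_1 \leq T+\tfrac{1}{2}$, hence uniformly $O(e^{-\mu T})$. Combined with the bound $\|u_0\|_{W^{k,p}([-T,T]\times X)} \leq CT^d\|f\|_{L^p}$ from Theorem \ref{thm:localsol} and the at worst polynomial growth of $g_1$, this contribution is $O(T^d e^{-\mu T}\|f\|_{L^p}\|g_1\|)$, which is absorbed in $O(e^{-\delta T}\|f\|_{L^p}\|g_1\|)$ for any $\delta < \mu$.

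The heart of the argument is the commutator term. Writing $[P_T,\zeta_0]u_0 = P_T(\zeta_0 u_0) - \zeta_0 P_T u_0$ and integrating by parts against $\chi(\rho_T) g_1$, the identity $P_T^* g_1 = \chi_T(\rho_1)(P_0^*-P_1^*) g_1$ (coming from $P_1^* g_1 = 0$) shows that $P_T^* g_1$ is $O(e^{-\mu T})$ on the relevant support. On $\supp \chi(\rho_T)$ the section $g_1$ is approximated by $g_{1,T}(t)=\kappa_1^*[g_1](t+T+1)$ with an error of size $O(e^{-\delta T}\|g_1\|)$ coming from the polyhomogeneous expansion of elements of $\mK_1^*$ at infinity (see \S\ref{subsection:lmt}). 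After substituting $g_{1,T}$ for $g_1$ in the neck, using $P_0 u_0 = f_0$ and $P_0^* g_{1,T} \equiv 0$, a further integration by parts on the cylinder reorganises the boundary contributions into exactly $-\langle(1-\chi)f_0, g_{1,T}\rangle_0$. The remaining piece $\langle \chi(\rho_T)(1-\zeta_1) f, g_1\rangle$ combines with the leftover bulk contribution of the commutator to yield $\langle f, (1-\chi_{T+1}(\rho_1)) g_1 \rangle$.

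The main obstacle is the careful cutoff bookkeeping required to verify that the various interior and boundary contributions combine precisely into the two displayed terms, with no residual contribution of order $1$. The statement for $g_2 \in \mK_2^*$ is proved by the entirely symmetric argument on the $Z_2$-side; the sign flip in front of $\langle(1-\chi)f_0, g_{2,T}\rangle_0$ reflects the reversed shift in $g_{2,T}(t) = \kappa_2^*[g_2](t-T-1)$, which itself encodes the orientation-reversing gluing identification $\phi_1(T+1+t,x) \simeq \phi_2(T+1-t,x)$.
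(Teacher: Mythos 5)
Your decomposition $f - P_T\zeta_0 u_0 = (1-\zeta_1)f - [P_T,\zeta_0]u_0 - \zeta_0(P_T - P_0)u_0$ is algebraically correct, but your treatment of the term $\langle (1-\chi(\rho_T))\zeta_0(P_T-P_0)u_0, g_1\rangle$ as a ``pure error term'' is wrong, and this is a fatal gap. You argue that the coefficients of $(1-\chi_T(\rho_1))(P_1-P_0)$ are $O(e^{-\mu\rho_1})$ and supported in $\rho_1 \leq T+\tfrac{1}{2}$, \emph{hence} uniformly $O(e^{-\mu T})$; that inference does not hold. The decay $O(e^{-\mu\rho_1})$ is only exponentially small when $\rho_1$ is comparable to $T$, whereas the support of $\zeta_0 u_0$ extends all the way down to $\rho_1 \approx 1$, where $e^{-\mu\rho_1} = O(1)$. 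Combined with $\|u_0\|_{W^{k,p}} = O(T^d\|f\|)$ and the polynomial growth of $g_1$, the term is generically $O(\mathrm{poly}(T)\|f\|\|g_1\|)$, not exponentially small, and it contributes to the main answer. The paper avoids this by never separating this term from the commutator: using the identity $[P_T,\zeta_0]u_0 + \zeta_0(P_T-P_0)u_0 = P_T\zeta_0 u_0 - \zeta_1 f$, it works with $\langle (1-\chi(\rho_T))P\zeta_0 u_0, g_1\rangle$ and then replaces $\zeta_0$ by $\zeta_{T-2}$ using $P_1^* g_1 = 0$. This shift is precisely what kills the contribution from the deep $Z_1$ interior and localises the integral to the near-cylindrical region $\rho_1 \geq T - 2$, where replacing $P_T$ by $P_0$ and $g_1$ by $g_{1,T}$ really is an $O(e^{-\delta T})$ error. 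Your decomposition discards this cancellation structure.

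Beyond this, the ``heart of the argument'' is not actually carried out: you assert that an integration by parts on the cylinder reorganises the commutator's boundary terms into $-\langle(1-\chi)f_0, g_{1,T}\rangle_0$ and that the remainder combines with $\langle \chi(\rho_T)(1-\zeta_1)f, g_1\rangle$ to recover $\langle f, (1-\chi_{T+1}(\rho_1))g_1\rangle$, but you say yourself that ``the main obstacle is the careful cutoff bookkeeping required to verify'' exactly this, with no residual $O(1)$ contribution. That bookkeeping is the entire content of the lemma; as written it is an unproved claim, and in fact it cannot be proved along these lines because the discarded $(P_T-P_0)$ term is not negligible. (Also note the sign convention: $f_1$ should be $(1-\chi(\rho_T))(f - P_T\zeta_0 u_0)$, as used in the proof of Lemma~\ref{lem:boundfi} and in the paper's own proof of this lemma; the $\chi(\rho_T)$ in the text where $f_1$ is first defined is a typo, which you inherit.)
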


\begin{rem}
    In the statement of the lemma and in the following proof, the notation $O(e^{-\delta T} \| f \|_{L^p} \|g_i\|)$ means that there is a constant $C > 0$, depending on $p > 1$, $\delta \in (0,\sigma)$ and possibly on the choice of norms on $\mK_i^*$ but independent of $T$, $f$ and $g_i$, such that
    \begin{equation*}
        | O(e^{-\delta T} \| f \|_{L^p} \|g_i\|) | \leq C e^{-\delta T} \| f \|_{L^p} \| g_i \| .
    \end{equation*}
\end{rem}

\begin{proof}
    Notice first that for any $\tau \leq T-2$ we have:
    \begin{align*}
        \langle f_1, g_1 \rangle & =  \langle (1-\chi(\rho_T))f, g_1 \rangle - \langle (1-\chi(\rho_T)) P_T \zeta_0 u_0, g_1 \rangle \\
        & = \langle (1-\chi(\rho_T))f, g_1 \rangle - \langle (1-\chi(\rho_T)) P_T \zeta_\tau u_0, g_1 \rangle
    \end{align*}
    since $(\zeta_\tau-\zeta_0)u_0$ has support in $\{ \rho_1 \leq T-1\}$ and $P_1^* g_1 = 0$. Given the decay of the coefficients of $P_1-P_0$ we have:
    \begin{equation}
        \langle (1-\chi(\rho_T)) P_T \zeta_{T-2} u_0, g_1 \rangle = \langle (1-\chi) P_0 \chi_{-2} u_0, g_{1,T} \rangle_0 +  O \left( e^{-\delta T} \|f\|_{L^p} \|g_1\| \right).
    \end{equation}
    Moreover we have $1-\chi(\rho_T) = (1-\chi_{T+1}(\rho_1))$ with the usual identifications. Thus the equality $\langle (1-\chi(\rho_T))f, g_1 \rangle = \langle f, (1-\chi_{T+1}(\rho_1)) g_1 \rangle$ clearly holds.
  
    It remains to compute the value of $\langle (1-\chi) P_0 \chi_{-2} u_0, g_{1,T} \rangle_0$. By integration by parts, for any $\tau \leq -2$ we have:
    \begin{equation}	\label{eq:u0pchi}
        \langle (1-\chi) P_0 \chi_{-2} u_0,g_{1,T} \rangle_0 = \langle (1-\chi) P_0 \chi_{\tau} u_0, g_{1,T} \rangle_0.
    \end{equation}
    But now we can write $P\chi_{\tau} u_0 = \chi_{\tau} f_0 + [P_0,\chi_\tau] u_0$. As $u_0$ and its derivatives of order less than $k$ have exponential decay at infinity and the differential operator $[P_0,\chi_\tau]$ has uniformly bounded coefficients and is supported in $[\tau-\frac{1}{2},\tau + \frac{1}{2}] \times X$, it follows that
	\begin{equation*}
		\lim_{\tau \rightarrow - \infty} \langle (1-\chi) [P_0,\chi_\tau] u_0 , g_{1,T} \rangle_0 = 0 
	\end{equation*}
    and therefore we can send $\tau \rightarrow - \infty$ in \eqref{eq:u0pchi} and obtain:
    \begin{align*}
        \langle (1-\chi) P_0 \chi_{-2} u_0, g_{1,T} \rangle_0  & = \lim_{\tau \rightarrow - \infty} \langle (1-\chi) \chi_\tau f_0, g_{1,T} \rangle_0 \\
        & = \langle (1-\chi) f_0, g_{1,T} \rangle_0 
    \end{align*}
    This proves the first equality of Lemma \ref{lem:obst}.
  
    For the second equality we can prove as above that:
    \begin{equation}
        \langle f_2, g_2 \rangle = \langle f, \chi_{T+1}(\rho_2) g_2 \rangle - \lim_{\tau \rightarrow \infty} \langle \chi P_0 (1 - \chi_\tau) u_0, g_{2,T} \rangle_0 + O \left( e^{-\delta T} \|f\|_{L^p} \|g_2\| \right).
    \end{equation}
    Then for $\tau$ large enough we have:
    \begin{align*}
        \langle \chi P_0 (1 - \chi_\tau) u_0, g_{2,T} & \rangle_0  = \langle \chi f_0, g_{2,T} \rangle_0 + \langle \chi[P_0,1-\chi_\tau] u_0 , g_{2,T} \rangle_0 \\
        & \longrightarrow \langle \chi f_0, g_{2,T} \rangle_0 - (u_{f_0}, g_{2,T})
    \end{align*}
    as $\tau \rightarrow \infty$, where $u_{f_0} \in \mE$ is the polyhomogeneous solution defined in \S\ref{subsection:localex}. By Lemma \ref{lem:pairing}, the last term is equal to:
    \begin{equation*}
        (u_{f_0}, g_{2,T}) = \langle f_0, g_{2,T} \rangle_0.
    \end{equation*}
    The second equality follows.
\end{proof}

In the next section, it will be useful to use a variation of the above lemma for arbitrary solutions of the equation $P_0 u = f_0$. Thus let $ v \in \mE$ and define $u^\prime_0 = Q_0 f_0 + v$, and as above write:
\begin{equation}
    f- P_T \zeta_0 u^\prime_0 = f^\prime_1 + f^\prime_2
\end{equation}
where $f^\prime_i \in L^p_\delta(F_i)$. As a corollary of Lemma \ref{lem:obst}, we can describe the obstructions to solving $P_i u = f^\prime_i$ as follows.

\begin{cor}     \label{cor:obst}
    Choose arbitrary norms on $\mE$, $\mK^*_1$ and $\mK^*_2$. Then, if $g_1 \in \mK_1^*$ and $g_{1,T}(t) = \kappa^*_1[g_1](t+T+1)$ it holds:
    \begin{equation*}
        \langle f^\prime_1, g_1 \rangle =  \langle f,(1-\chi_{T+1}(\rho_1)) g_1 \rangle - \langle (1-\chi) f_0, g_{1,T}\rangle_0  - (v, g_{1,T}) + O \left( e^{-\delta T} (\|f\|_{L^p}+\|v\|) \|g_1\| \right) .
    \end{equation*}
    If $g_2 \in \mK_2^*$ and $g_{2,T} = \kappa^*_2[g_2](t-T-1)$ then:
    \begin{equation*}
        \langle f^\prime_2, g_2 \rangle =  \langle f, (1-\chi_{T+1}(\rho_2))g_2 \rangle + \langle (1-\chi) f_0, g_{2,T}\rangle_0 + (v,g_{2,T})  + O \left( e^{-\delta T} (\|f\|_{L^p}+\|v\|) \|g_2\| \right) .
    \end{equation*}
\end{cor}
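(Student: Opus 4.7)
\textbf{Plan for the proof of Corollary \ref{cor:obst}.} The corollary will follow from Lemma \ref{lem:obst} by linearity once we compute a single additional term. Since $u'_0 = u_0 + v$, the cutoff decomposition gives
\begin{equation*}
    f'_1 - f_1 = -(1-\chi(\rho_T))\, P_T \zeta_0 v, \qquad f'_2 - f_2 = -\chi(\rho_T)\, P_T \zeta_0 v,
\end{equation*}
so that the corollary reduces to establishing the two identities
\begin{align*}
    \langle (1-\chi(\rho_T)) P_T \zeta_0 v, g_1 \rangle &= (v, g_{1,T}) + O\!\left( e^{-\delta T} \|v\|\,\|g_1\| \right), \\
    \langle \chi(\rho_T) P_T \zeta_0 v, g_2 \rangle &= -(v, g_{2,T}) + O\!\left( e^{-\delta T} \|v\|\,\|g_2\| \right).
\end{align*}
Substituting these into Lemma \ref{lem:obst} yields the two displayed formulas.

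To prove the first identity I would mimic the proof of Lemma \ref{lem:obst} with $v$ playing the role of $u_0$. As $P_1^* g_1 = 0$ and $(\zeta_\tau - \zeta_0)v$ is compactly supported in a region where $1 - \chi(\rho_T) \equiv 1$ (for $\tau \leq T-2$), an integration by parts shows $\langle (1-\chi(\rho_T)) P_1 \zeta_0 v, g_1 \rangle = \langle (1-\chi(\rho_T)) P_1 \zeta_{T-2} v, g_1 \rangle$. I would then transfer the computation to the cylinder via the identification $\rho_1 = t + T + 1$. On the support $\{\rho_1 \geq T - \frac{3}{2}\}$ of $\zeta_{T-2} v$, the coefficients of $P_1 - P_0$ and of $g_1 - \kappa^*_1[g_1]$ decay like $e^{-\mu \rho_1}$; combined with the at-most-polynomial growth of $v \in \mE$, replacing $P_1$ by $P_0$ and $g_1$ by $g_{1,T}$ produces an error of order $e^{-\mu T}\, T^N \|v\|\|g_1\| = O(e^{-\delta T}\|v\|\|g_1\|)$ for any $\delta < \mu$. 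This leaves a cylinder integral of the form $\langle (1-\chi) P_0[\chi_{-a} v], g_{1,T} \rangle_0$ for some fixed $a > 0$ coming from the expression of $\zeta_{T-2}$ in cylindrical coordinates.

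The identification with the pairing is then immediate. Since $P_0 v = 0$, one has $P_0[\chi_{-a} v] = [P_0, \chi_{-a}] v$, which is compactly supported in a neighbourhood of $t \approx -a$. Choosing $a$ large enough that this support is contained in $\{\chi = 0\}$, the factor $(1-\chi)$ equals $1$ on the support, and the integral becomes $\int_\R \langle P_0[\chi_{-a} v], g_{1,T} \rangle\, dt$, which by definition of the pairing (using $\chi_{-a}$ in place of $\chi$ as cutoff, allowed since the pairing is cutoff-independent) is exactly $(v, g_{1,T})$.

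The second identity is established by a symmetric argument on the $Z_2$ side. Here $\chi(\rho_T)$ localises near the far end of $Z_2$'s neck, where the cylindrical coordinate is reversed relative to $\rho_2$; the same commutator-plus-compact-support trick produces $(v, g_{2,T})$ with the opposite sign, consistent with the convention $g_{2,T}(t) = \kappa^*_2[g_2](t-T-1)$. The main book-keeping difficulty—and the only step where care is really needed—is ensuring that every error introduced by the asymptotic approximations of $P_i$ and $g_i$ stays within $O(e^{-\delta T}\|v\|\|g_i\|)$; this is routine once one observes that the polynomial $T$-factors coming from the growth of $v \in \mE$ are absorbed into $e^{-(\mu-\delta)T}$, which is possible precisely because $\delta < \sigma \leq \mu$.
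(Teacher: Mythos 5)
Your proposal is correct, and since the paper offers no written proof (it states the result as an immediate consequence of Lemma \ref{lem:obst}), your argument is essentially the intended one. The decomposition $f'_i - f_i = \mp(\cdots)P_T\zeta_0 v$ correctly reduces the problem to the two extra pairing identities, and your key observation --- that because $P_0 v = 0$ one gets $P_0[\chi_{-a}v] = [P_0,\chi_{-a}]v$, compactly supported where $1-\chi \equiv 1$, so that cutoff-independence of $(\cdot,\cdot)$ identifies the integral with $(v,g_{i,T})$ --- is exactly what replaces the $\tau\to-\infty$ decay argument used for $u_0 = Q_0 f_0$ in Lemma \ref{lem:obst} (which would fail for $v\in\mE$, since $v$ has polynomial growth at $-\infty$). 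The only slight imprecision is the claim that $(\zeta_\tau-\zeta_0)v$ is supported where $1-\chi(\rho_T)\equiv 1$: its support on the $Z_2$ side lies where $1-\chi(\rho_T)\equiv 0$, but the relevant product $(1-\chi(\rho_T))(\zeta_\tau-\zeta_0)v$ is indeed supported where $1-\chi\equiv 1$ on $Z_1$, so the integration-by-parts step goes through as stated.
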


\begin{rem}
    As in the previous lemma, the notation $O ( e^{-\delta T} (\|f\|_{L^p}+\|v\|) \|g_i\| )$ means that there is a constant $C > 0$, depending on $p > 1$, $\delta \in (0,\sigma)$, and possibly on the choice of norms on $\mE$, $\mK^*_1$ and $\mK^*_2$ but independent of $T$, $f$, $g_i$ and $v$ such that 
    \begin{equation*}
        | O ( e^{-\delta T} (\|f\|_{L^p}+\|v\|) \|g_i\| ) | \leq C e^{-\delta T} (\|f\|_{L^p}+\|v\|) \|g_i\| .
    \end{equation*}
\end{rem}

\paragraph{} So far, everything we did works without need to impose any conditions on the real roots of the indicial operator $P_0$. We now outline the construction which we will perform in the next part and emphasise where the restricting assumption of Theorem \ref{thm:main} come from. The general idea of our construction is to identify a subspace of $L^p(F_T)$ on which we can find approximate solutions of the equation $P_Tu = f$  with good estimates and a control on the error of the form $\| f-P_Tu\|_{L^p} \leq C e^{-\delta T} \|f\|_{L^p}$ for $T$ large enough. Once we can achieve this, we will simply use an iterative process to build exact solutions, by taking successive projections onto this good subspace. 

By taking cutoffs as above, we can solve the equation $P_0u = f_0$ on the cylinder, with a general solution of the form $u = Q_0 f_0 + v$ for some arbitrary $v \in \mE$. With the above notations, it remains to consider the equations $P_i u_i = f^\prime_i$ on the EAC manifolds $Z_1$ and $Z_2$. The idea is to choose $v$ appropriately so that all the obstructions to finding decaying solutions $u_i \in W^{k,p}_{\delta}(E_i)$ vanish, up to exponentially decaying terms. If this can be done, we just need to take cutoffs of these solutions to build an approximate solution, up to an exponentially decaying term. Using Corollary \ref{cor:obst} we have essentially reduced our linear PDE problem to the following finite-dimensional system, where the unknown is $v \in \mE$:
\begin{equation}    \label{eq:linsys}
    \begin{cases}
        (v, g_{1,T}) = \langle f,(1-\chi_{T+1}(\rho_1)) g_1 \rangle - \langle (1-\chi) f_0, g_{1,T}\rangle_0, & \forall g_1 \in \mK^*_1 \\
        (v,g_{2,T}) = - \langle f, (1-\chi_{T+1}(\rho_2)) g_2 \rangle + \langle (1-\chi) f_0, g_{2,T}\rangle_0, & \forall g_2 \in \mK^*_2
    \end{cases}
\end{equation}
where we use the notations:
\begin{equation*}
    g_{1,T}(t) = \kappa^*_1[g_1](t+T+1), ~~ \text{and} ~~ g_{2,T} = \kappa^*_2[g_2](t-T-1).
\end{equation*}
We call this system the \emph{characteristic system} of our gluing problem. There are obvious obstructions to finding a solution to this system. We need at least to impose $f$ to be orthogonal to all the sections of the form $(1-\chi_{T+1}(\rho_i)) g_i$ with $g_i \in \mK_{i,0}$, as in this case $g_{i,T} = 0$. Actually, a more careful examination of the characteristic system shows that we need $f$ to be orthogonal to the full substitute cokernel. Indeed, a pair $(g_1,g_2) \in \mK_1^* \times \mK_2^*$ is matching at $T$ if and only if $g_{1,T} = g_{2,T}$ with the above notations. Thus, if there exists $v \in \mE$ solving the system we must have
\begin{equation*}
    \langle f,(1-\chi_{T+1}(\rho_1)) g_1 + (1-\chi_{T+1}(\rho_2)) g_2 \rangle = 0
\end{equation*}
for any pair $(g_1,g_2)$ matching at $T$.

As a consequence, the substitute cokernel $\mK_T^*$ naturally arises as a space of obstructions to constructing approximate solutions of $P_Tu = f$ by our method, as it is necessary to require $f$ to be orthogonal to $\mK^*_T$ for the linear system \eqref{eq:linsys} to admit a solution. In fact, we will see that this is also a sufficient condition (Lemma \ref{lem:solchar}). Unfortunately,  the coefficients of this system vary analytically with $T$, and therefore the rank of the system might drop at some points. Furthermore, the system is generally underdetermined, with an obvious kernel formed by the subspace of $\mE$ orthogonal to all $g_{1,T}$ and $g_{2,T}$, for $g_i$ varying in $\mK_i^*$. Hence, even if the characteristic system admits a solution $v$ whenever $f$ is orthogonal to the substitute cokernel we might not be able to obtain reasonable estimates on the norm of $v$, especially near the values of $T$ at which the rank of the system drops. We shall prove that these difficulties can be avoided in the case where the indicial operator $P_0$ has only one root, which will be sufficient for our applications.

    \subsection{Main construction}      \label{subsection:construction}

\paragraph{} Let us first consider the case where $P_0$ has a single root $\lambda_0$ of order $1$, before generalising to any order. In that case, the elements of $\mE$ are of the form $e^{i \lambda_0 t} u(x)$ with $u$ translation-invariant section of $\underline E_0$, and similarly for $\mE^*$. As a consequence, the matching condition \eqref{eq:matchingu} does not really depend on $T$, up to overall factors of $e^{\pm i \lambda_0 (T+1)}$. In particular, the dimensions of $\mK_T$ and $\mK^*_T$ are independent of $T$:
\begin{equation*}
    \dim \mK_T = \dim \mK_{0,1} + \dim \mK_{0,2} + \dim (\image \kappa_1 \cap \image \kappa_2)
\end{equation*}
and similarly for the substitute cokernel $\mK_T^*$. This implies that we can uniformly bound the $L^2$-orthogonal projections onto $\mK_T$ and $\mK^*_T$:

\begin{lem}     \label{lem:lproj}
    Let $p > 1$ and $l \in \N$. Then for $T$ large enough the norm of the $L^2$-orthogonal projection of $W^{l,p}(F_T)$ onto $\mK^*_T$ is bounded from above by a uniform constant $C_1 > 0$, depending on $l$ and $p$ but independent of $T$. Similarly, the norm of the $L^2$-orthogonal projection of $W^{l,p}(E_T)$ onto $\mK_T$ is bounded from above by a uniform constant $C^\prime_1 > 0$.
\end{lem}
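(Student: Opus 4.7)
The plan is to construct, for each $T$ large enough, an explicit basis of $\mK^*_T$ with controlled scaling in $T$, and then bound the projection via the formula $\Pi f = \sum_{i,j} (G_T^{-1})_{ij}\langle f, w_j \rangle w_i$, where $G_T = [\langle w_i, w_j\rangle_{L^2}]$ is the associated Gram matrix. Since $P_0$ has a single real root $\lambda_0$ of order $1$, every element of $\mE^*$ takes the form $e^{i\lambda_0 t}\hat v(x)$ with $\hat v \in \ker P^*(\lambda_0)$, and the matching condition \eqref{eq:matchingu} reduces to an equation on profiles depending on $T$ only through the unit-modulus phase $e^{-2i\lambda_0(T+1)}$. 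This allows a basis $(w_k^T)_{k=1}^N$ of $\mK^*_T$ to be chosen with $N$ independent of $T$, split into two families. Type (a) consists of extensions $(1-\chi_{T+1}(\rho_i))\alpha$ of fixed decaying solutions $\alpha \in \mK^*_{i,0}$; these have $W^{l,p}$-norm uniformly bounded in $T$ by the exponential decay of $\alpha$. Type (b) consists of matching-pair extensions $w_k^T = (1-\chi_{T+1}(\rho_1))u_1^k + (1-\chi_{T+1}(\rho_2))u_2^k$ with non-trivial polyhomogeneous profile $e^{i\lambda_0 t}\hat w_k(x)$; since $|e^{i\lambda_0 t}\hat w_k(x)| = |\hat w_k(x)|$ is essentially constant along the whole neck of length $2T$, these satisfy $\|w_k^T\|_{L^q} \sim CT^{1/q}$ and $\|w_k^T\|_{W^{l,q}} \sim CT^{1/q}$ for every $q \in [1,\infty]$.

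The next step is to analyse the Gram matrix $G_T$ in the block decomposition corresponding to the split (a)/(b). The (a)(a)-block has entries of order $O(1)$ and, as $T \to \infty$, converges to the block-diagonal Gram matrix of the chosen bases of $\mK^*_{1,0}$ and $\mK^*_{2,0}$, hence is uniformly invertible. The (b)(b)-block has entries of size $O(T)$: integration over the neck gives a factor $\sim 2T$ times the Gram matrix on $X$ of the profiles $\hat w_k$, the $T$-dependent phases $e^{\pm 2i\lambda_0(T+1)}$ cancelling in the product $w_i\overline{w_j}$. After dividing by $T$, this block is uniformly invertible since the $\hat w_k$ were chosen linearly independent. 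The mixed (a)(b)-block has $O(1)$ entries because type-(a) elements are essentially supported where $\rho_T$ is away from $\pm T$. Applying the Schur complement formula, $G_T^{-1}$ then has (a)(a)-block of order $O(1)$, (b)(b)-block of order $O(T^{-1})$, and off-diagonal entries of order $O(T^{-1})$.

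Finally, I estimate each term of the projection formula by Hölder's inequality $|\langle f, w_j^T\rangle| \leq \|f\|_{L^p}\|w_j^T\|_{L^{p'}} \leq \|f\|_{W^{l,p}}\|w_j^T\|_{L^{p'}}$ together with the bound on $\|w_i^T\|_{W^{l,p}}$. The (a)(a) contributions give $O(1)\cdot O(1)\cdot O(1) = O(1)$; the (b)(b) contributions give $O(T^{-1})\cdot T^{1/p'}\cdot T^{1/p} = O(1)$, the growth of the basis norms being precisely cancelled by the decay of $G_T^{-1}$; the mixed (a)(b) and (b)(a) contributions are $O(T^{-1/p})$ and $O(T^{-1/p'})$, both tending to zero. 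Summing over the finite index set yields $\|\Pi f\|_{W^{l,p}} \leq C_1\|f\|_{W^{l,p}}$ uniformly in $T$. The identical argument applied to $P^*$ in place of $P$ gives the corresponding bound for the projection onto $\mK_T$.

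The main technical point is verifying the asymptotic invertibility of the rescaled (b)(b)-block of $G_T$. This is where the single-root-of-order-$1$ hypothesis is used crucially: a root of higher order would produce polyhomogeneous profiles with polynomial factors of $t$, leading to basis norms of order $T^{d+1/q}$ and a much more delicate block structure in which the scaling cancellation between $G_T^{-1}$ and the basis norms would no longer be exact.
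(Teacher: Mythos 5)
Your proof is correct and takes essentially the same route as the paper's one-line sketch (fix a basis of $\mK^*_T$ adapted to the decomposition $\mK^*_{1,0}\oplus\mK^*_{2,0}\oplus(\image\kappa^*_1\cap\image\kappa^*_2)$, orthonormalise, and read off the projection). You execute this explicitly via the Gram matrix and a Schur complement, which makes transparent the key cancellation: the growth $\|w^T_k\|_{L^{p'}}\|w^T_k\|_{W^{l,p}}\sim T$ for matching-pair basis elements is exactly compensated by the $O(T^{-1})$ decay of the corresponding block of $G_T^{-1}$.
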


\begin{proof}
    This can be proved by fixing basis for $\mK_{0,1}$, $\mK_{0,2}$ and $\image \kappa_1 \cap \image \kappa_2$ and considering the corresponding basis of $\mK_T$. By Gram--Schmidt orthonormalisation, one can deduce an explicit expression for the $L^2$-projection, from which the lemma easily follows.
\end{proof}

\paragraph{} Let us now choose an arbitrary complement $\mE^*_1$ of $\image \kappa^*_1 \cap \image \kappa^*_2$ in $\image \kappa^*_1$, and a complement $\mE^*_2$ of $\image \kappa^*_1 \cap \image \kappa^*_2$ in $\image \kappa^*_2$. Thus we have a direct sum decomposition:
\begin{equation*}
    \image \kappa^*_1 + \image \kappa^*_2 = \image \kappa^*_1 \cap \image \kappa^*_2 \oplus \mE^*_1 \oplus \mE^*_2 \subset \mE^* .
\end{equation*}
Pick a complement $\mE'$ of $\image \kappa_1 \cap \image \kappa_2$ in $\mE$, so that the pairing 
\begin{equation*}
    \mE' \times (\image \kappa^*_1 + \image \kappa^*_2) \rightarrow \C 
\end{equation*}
induced by $(\cdot, \cdot)$ is non-degenerate. For $i = 1,2$ define $\mE_i = \image \kappa_i \cap \mE'$. Then the pairings:
\begin{equation}        \label{eq:eipairing}
    \mE_1 \times \mE^*_2 \rightarrow \C ~~ \text{and} ~~ \mE_2 \times \mE_1^* \rightarrow \C
\end{equation}
induced by $(\cdot,\cdot)$ are non-degenerate. Indeed, if $u \in \mE_1$ is orthogonal to $\mE^*_2$ then it is orthogonal to $\image \kappa^*_1 + \image \kappa^*_2$ and therefore belongs to $\image \kappa_1 \cap \image \kappa_2$, which means that $u = 0$ as it is an element of $\mE'$. On the other hand if $v \in \mE_2^*$ is orthogonal to $\mE_1$, then it is orthogonal to $\image \kappa_1$ and to $\image \kappa_2$, which means that it belongs to $\image \kappa^*_1 \cap \image \kappa^*_2$ and thus $v = 0$ by definition of $\mE_2^*$. Therefore, if we define $\mE_0$ as the orthogonal space of $\mE_1^* \oplus \mE_2^*$ in $\mE'$ for the above pairing, we have a direct sum decomposition:
\begin{equation*}
    \mE' = \mE_0 \oplus \mE_1 \oplus \mE_2.
\end{equation*}
This implies that the pairing 
\begin{equation}    \label{eq:ezeropairing}
    \mE_0 \times (\image \kappa^*_1 \cap \image \kappa^*_2) \rightarrow \C
\end{equation}
is non-degenerate. These conventions will be useful to put the system \eqref{eq:linsys} in a more tractable form. For definiteness we prefer to work in a complement of $\image \kappa_1 \cap \image \kappa_2$, as this is the kernel of the system. This allows us to prove the following:

\begin{prop}        \label{prop:approxsolv}
    Let $p > 1$ and $\delta \in (0,\sigma)$, and recall that we denoted the order of the elliptic operator $P_T$ by $k$. Then there exist constants $C_2, C_3 > 0$, depending only on $p$ and $\delta$, such that for $T$ large enough the following holds. If $f \in L^p(F_T)$ is $L^2$-orthogonal to $\mK^*_T$, then there exists $u \in W^{k,p}(E_T)$ $L^2$-orthogonal to $\mK_T$ such that:
    \begin{equation*}
        \|f-P_T u\|_{L^p} \leq C_2 e^{- \delta T}\|f\|_{L^p} ~~ \text{and} ~~ \|u\|_{W^{k,p}} \leq C_3 T \|f\|_{L^p} .
    \end{equation*}
\end{prop}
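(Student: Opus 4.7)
I would follow the strategy outlined in \S\ref{subsection:characteristic}: use the hypothesis $f \perp \mK^*_T$ to solve the characteristic system \eqref{eq:linsys} for some $v \in \mE$, form the cylinder approximation $u_0' = Q_0 f_0 + v$, apply Proposition \ref{prop:lmeps} on each EAC building block to treat the remaining equations $P_i u_i = f_i'$, patch everything via the cutoffs $\zeta_\tau$, and finish by subtracting the $L^2$-projection onto $\mK_T$. The single-root-of-order-one assumption plays two roles: the sections $g_{i,T}(t) = \kappa^*_i[g_i](t \pm (T+1))$ differ from $\kappa^*_i[g_i]$ only by unit-modulus phases $e^{\pm i \lambda_0(T+1)}$, so the coefficients of \eqref{eq:linsys} are uniformly bounded in $T$ and its rank does not drop; and Lemma \ref{lem:lproj} then provides uniform control on the $L^2$-projections onto $\mK_T$ and $\mK^*_T$.

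\textbf{Solving the characteristic system.} Decompose $\mE' = \mE_0 \oplus \mE_1 \oplus \mE_2$ and $\image \kappa^*_1 + \image \kappa^*_2 = (\image \kappa^*_1 \cap \image \kappa^*_2) \oplus \mE^*_1 \oplus \mE^*_2$ as in \S\ref{subsection:construction}. Testing \eqref{eq:linsys} against $\mE^*_2$ and $\mE^*_1$ determines the $\mE_1$- and $\mE_2$-components of $v$ via the non-degenerate pairings \eqref{eq:eipairing}. For the $\mE_0$-component, tested against $\image \kappa^*_1 \cap \image \kappa^*_2$, the two equations of \eqref{eq:linsys} must agree on a common value $(v,g_{1,T}) = (v,g_{2,T})$; I would show that their difference, evaluated on a matching pair $(g_1,g_2)$, reduces after using Lemma \ref{lem:pairing} to eliminate the cylinder integrals against $(1-\chi)f_0$ to $\langle f, w\rangle$, where $w = (1-\chi_{T+1}(\rho_1))g_1 + (1-\chi_{T+1}(\rho_2))g_2 \in \mK^*_T$ is the associated substitute-cokernel element. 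The hypothesis $f \perp \mK^*_T$ is exactly the required compatibility, and \eqref{eq:ezeropairing} then pins down the $\mE_0$-component. Since $\|f_0\|_{L^p} \leq C\|f\|_{L^p}$ and the sections $(1-\chi_{T+1}(\rho_i)) g_i$ have $L^q$-norm of order $T^{1/q}$, the RHS of the system is bounded by $CT\|f\|_{L^p}$, so $\|v\| \leq CT\|f\|_{L^p}$.

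\textbf{Assembly.} With $u_0' = Q_0 f_0 + v$, Theorem \ref{thm:localsol} gives $\|u_0'\|_{W^{k,p}([-T-1,T+1]\times X)} \leq CT\|f\|_{L^p}$, and $f - P_T(\zeta_0 u_0') = f_1' + f_2'$ with $\|f_i'\|_{L^p_\delta} \leq CT\|f\|_{L^p}$ (analog of Lemma \ref{lem:boundfi}). By Corollary \ref{cor:obst} and the choice of $v$, every $g_i \in \mK^*_i$ satisfies $|\langle f_i', g_i\rangle| \leq CTe^{-\delta T}\|f\|_{L^p}\|g_i\|$: the components in $\mK^*_{i,+}$ are handled by the characteristic equation itself, while those in $\mK^*_{i,0}$ use that $(1-\chi_{T+1}(\rho_i))g_i \in \mK^*_T$ and is thus orthogonal to $f$. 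Applying Proposition \ref{prop:lmeps} yields $\tilde u_i$ on $Z_i$ with $P_i \tilde u_i = f_i' - w_i$, where $w_i \in \mK^*_{i,0}$ has norm $O(Te^{-\delta T})\|f\|_{L^p}$ and $\|\tilde u_i\|_{W^{k,p}} \leq CT\|f\|_{L^p}$ on $\{\rho_i \leq T+1\}$. Patching $\tilde u_1$, $\tilde u_2$ and $\zeta_0 u_0'$ via the cutoffs of \S\ref{subsection:characteristic} produces $u \in W^{k,p}(E_T)$ with $\|u\|_{W^{k,p}} \leq C_3 T\|f\|_{L^p}$; the difference $f-P_T u$ equals $-w_1 - w_2$ plus commutator terms supported where $\tilde u_i$ decay exponentially, so $\|f-P_T u\|_{L^p} \leq C_2 e^{-\delta' T}\|f\|_{L^p}$ for any $\delta' < \delta$. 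Subtracting the $L^2$-projection of $u$ onto $\mK_T$ enforces the orthogonality while preserving both estimates via Lemma \ref{lem:lproj} and \eqref{eq:badcst}.

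\textbf{Main obstacle.} The delicate step is extracting the solvability of \eqref{eq:linsys} from $f \perp \mK^*_T$ together with the uniform bound $\|v\| \leq CT\|f\|_{L^p}$; this is precisely where the one-real-root-of-order-one assumption is indispensable, since otherwise the $T$-dependent coefficients of the system could cause the rank to drop at isolated values of $T$ and ruin the uniformity in $T$ needed to close the iteration underlying Theorem \ref{thm:main}.
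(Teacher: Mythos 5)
Your proposal follows the paper's proof essentially step for step: reduce to the characteristic system \eqref{eq:linsys}, use the decompositions $\mE' = \mE_0 \oplus \mE_1 \oplus \mE_2$ and $\mK^*_{i,+} = \mK^*_{i,m} \oplus \mK^*_{i,\perp}$ with the non-degenerate pairings \eqref{eq:eipairing}--\eqref{eq:ezeropairing} to invert it, apply Theorem \ref{thm:localsol} on the neck and Proposition \ref{prop:lmeps} on the two blocks, patch via the $\zeta_\tau$, and finally subtract the $\mK_T$-projection using Lemma \ref{lem:lproj}. Your reading of the role of $f \perp \mK_T^*$ (it kills the $\mK^*_{i,0}$ obstructions outright and makes the two $\mE_0$-equations compatible) matches the paper's.

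There is, however, a quantitative slip that breaks the stated linear bound. You conclude $\|v\| \leq CT\|f\|_{L^p}$ from the observation that $(1-\chi_{T+1}(\rho_i))g_i$ has $L^q$-norm of order $T^{1/q}$; but Hölder's inequality applied to exactly that observation gives the sharper $\|v\| \leq CT^{1/q}\|f\|_{L^p}$, and the sharper exponent is what you actually need. Indeed, Theorem \ref{thm:localsol} controls only $\|Q_0 f_0\|_{W^{k,p}([-T,T]\times X)} \leq CT\|f_0\|_{L^p}$; the contribution from $v$ must be estimated separately, and since $v$ is essentially translation-invariant (one real root of order one) one has $\|\zeta_0 v\|_{W^{k,p}} \sim T^{1/p}\|v\|$. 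With your $\|v\| \leq CT\|f\|$ this would give $\|\zeta_0 v\|_{W^{k,p}} \lesssim T^{1+1/p}\|f\|$, and hence $\|u\|_{W^{k,p}} \lesssim T^{1+1/p}\|f\|$, strictly weaker than the claimed $C_3 T\|f\|$; with $\|v\| \leq CT^{1/q}\|f\|$ one recovers $T^{1/p}\cdot T^{1/q} = T$. The fix is contained in what you already wrote: apply Hölder directly and keep the exponent $1/q$. The rest of the assembly, including the treatment of $\mK^*_{i,0}$ via orthogonality to $\mK^*_T$, the exponential smallness of the $h_i, w_i$ terms, and the final projection onto $\mK_T^\perp$ absorbing the $\|Pw\|_{L^p} = O(e^{-\delta T}\|w\|)$ error, is correct and matches the paper.
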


\begin{proof}
    For $i=1,2$, let us fix subspaces $\mF_i \subset C^\infty(E_i)$ as in Proposition \ref{prop:lmeps}. The orthogonal space $\mK_{i,+}^*$ of $\mK_{i,0}$ in $\mK_i$ is isomorphic to $\image \kappa^*_i$, so that the decomposition $\image \kappa^*_i = \image \kappa^*_1 \cap \image \kappa^*_2 \oplus \mE_i^*$ induce a corresponding decomposition $\mK_{i,+}^* = \mK^*_{i,m} \oplus \mK^*_{i,\perp}$ (the subscript $m$ stands for matching).

    If $f \in L^p(F_T)$ is orthogonal to the substitute cokernel, we can use the above decompositions of $\mE$ amd $\mE^*$ to put the system \eqref{eq:linsys} in the form:
    \begin{equation*}
        \begin{cases}
            (v_0, \kappa^*_1[g_0]) = e^{-i\lambda_0(T+1)} \langle f,(1-\chi_{T+1}(\rho_1)) g_0 \rangle - \langle (1-\chi)f_0, \kappa^*_1[g_0] \rangle_0, & \forall g_0 \in \mK^*_{1,m} \\
            (v_1, \kappa^*_1[g_1]) = e^{-i\lambda_0 (T+1)} \langle f,(1-\chi_{T+1}(\rho_1)) g_1 \rangle - \langle (1-\chi) f_0, \kappa^*_1[g_1] \rangle_0, & \forall g_1 \in \mK^*_{1,\perp} \\
            (v_2, \kappa^*_2[g_2]) = - e^{i \lambda_0(T+1)} \langle f,(1-\chi_{T+1}(\rho_2)) g_2 \rangle + \langle (1-\chi) f_0, \kappa^*_2[g_2] \rangle_0, & \forall g_2 \in \mK^*_{2,\perp}
        \end{cases}
    \end{equation*}
    where we decompose any element $v \in \mE'$ as $v= v_0 + v_1 + v_2 \in \mE_0 \oplus \mE_1 \oplus \mE_2$ and the factors $e^{\pm i \lambda_0 (T+1)}$ come from $\kappa^*_{1,T} = e^{i \lambda_0(T+1)} \kappa^*_1$ and $\kappa_{2,T} = e^{-i \lambda_0(T+1)}\kappa^*_2$. Non-degeneracy of the pairings \eqref{eq:eipairing} and \eqref{eq:ezeropairing} implies that this is of the form:
    \begin{equation*}
        A v = b_T(f) \in \R^N
    \end{equation*}
    where $N = \dim \mE' = \dim \image \kappa_1^* + \dim \image \kappa_2^*$ and $A : \mE' \rightarrow \R^N$ is an invertible linear map which does not depend on $T$. Thus there is a unique solution $v = A^{-1} b(f)$, and if we fix norms on $\mE'$ and $\R^N$ we have a uniform bound:
    \begin{equation*}
        \| v \| \leq C \| b_T(f) \|.
    \end{equation*}
    As the elements of $\mK_1^*$ and $\mK^*_2$ are bounded in $C^0$ norm, each of the sections $(1-\chi_{T+1}) g_i$ have $L^q$-norm bounded by $C T^{\frac 1 q} \| g_i \|$, where $q$ is the conjugate exponent of $p$. Thus, we can deduce that the norm of $b_T(f)$ satisfies a bound of the form:
    \begin{equation*}
        \| b_T(f) \| \leq C' T^{\frac 1 q} \|f\|_{L^p} .
    \end{equation*}
    Hence $\| v \| \leq C'' T^{\frac 1 q} \|f\|_{L^p}$ for a constant independent of $T$.

    Following the idea outlined in the previous part, let us write:
    \begin{equation*}
        f- P_T(\zeta_0 Q_0 f_0 + \zeta_0 v) = f_1 + f_2
    \end{equation*}
    with $f_1 \in L^p(F_1)$ and $f_2 \in L^p(F_2)$, each of the sections $f_i$ being supported in the domain  $\{ \rho_i \leq T+ 2 \} \subset Z_i$. By Theorem \ref{thm:localsol}, we have estimates:
    \begin{equation*}
        \| \zeta_0 Q_0 f_0 \|_{W^{k,p}} \leq C T \|f_0 \|_{L^p} \leq C^\prime T \| f \|_{L^p}.
    \end{equation*}
    Further, as $v$ is uniformly bounded and $\zeta_0 v$ has support in a domain equivalent to a finite cylinder $[-T-1,T+1] \times X$ we have a bound:
    \begin{equation*}
        \| \zeta_0 v \|_{W^{k,p}} \leq C T^{\frac 1 p} \| v\| \leq C' T \|f\|_{L^p}.
    \end{equation*}
    As in the proof of Lemma \ref{lem:boundfi}, we can use the uniform bound on $v$ to prove that the weighted norms of $f_1$ and $f_2$ satisfy bounds:
    \begin{equation*}
        \| f_i \|_{L^p_\delta} \leq CT \|f\|_{L^p}.
    \end{equation*}
    We now consider the equations $P_1 u_1 = f_1$ on $Z_1$ and $P_2 u_2 = f_2$ on $Z_2$. By Proposition \ref{prop:lmeps}, there exist $w_i \in \mK^*_{i,0}$, $h_i \in \mF_i$ and $u_i \in W^{k,p}_\delta(E_i)$ such that:
    \begin{equation*}
        P_i(u_i + h_i) = f_i - w_i. 
    \end{equation*}
    Moreover, our choice of $v$ implies uniform bounds of the form:
    \begin{equation*}
        \| u_i \|_{W^{k,p}_\delta} \leq C \| f_i \|_{L^p_\delta} \leq C^\prime T \|f\|_{L^p}, ~~ \| h_i \| + \| w_i \| \leq C^{\prime\prime} e^{-\delta T} \|f\|_{L^p}
    \end{equation*}
    for some uniform constants $C^\prime$ and $C^{\prime\prime}$. Taking cutoffs we can write:
    \begin{equation*}
        f_i - P_T(\chi_{T+1}(\rho_i) u_i + \chi_{T+1}(\rho_i) h_i) = \chi_{T+1}(\rho_i) w_i + r_i
    \end{equation*}
    where $r_i$ is an error term of the form
    \begin{equation*}
        r_i = (P_i-P_T)(\chi_{T+1}(\rho_i) u_i + \chi_{T+1}(\rho_i) h_i) + [P_i, \chi_{T+1}(\rho_i)](u_i + h_i).
    \end{equation*}
    As the coefficients of $P_i-P_T$ and their derivatives have exponential decay with $T$, $u_i$ has exponential decay at infinity. Given the bound on $h_i$, it follows that for any $0 < \epsilon < \delta$ we can bound the errors terms by:
    \begin{equation}
        \|r_i\|_{L^p} \leq C e^{-(\delta-\epsilon) T} \| f \|_{L^p}
    \end{equation}
    for some uniform constant. Let us define:
    \begin{equation*}
        u = \zeta_0 u_0 + \chi_{T+1}(\rho_1)(u_1 + h_1) + \chi_{T+1}(\rho_2) (u_2+h_2).
    \end{equation*}
    Then $f-P_Tu = \chi_{T+1}(\rho_1) w_1 + \chi_{T+2}(\rho_2)w_2 + r_1 + r_2$ satisfies $\|f-P_Tu\|_{L^p} \leq C e^{-(\delta-\epsilon)T} \|f\|_{L^p}$, and $\|u\|_{W^{l,p}} \leq CT \| f \|_{L^p}$, for some constant $C$. By Lemma \ref{lem:lproj}, we can decompose $u = u^\prime + w$ where $w \in \mK_T$ and $u^\prime$ is orthogonal to the substitute kernel. Moreover we have bounds:
    \begin{equation*}
        \|u^\prime\|_{W^{k,p}} \leq (1+C^\prime_1) \|u\|_{W^{k,p}} \leq C^\prime T \| f \|_{L^p}, ~~ \|w\|_{W^{k,p}} \leq C_1 \|u\|_{W^{k,p}} \leq C^{\prime\prime} T \|f\|_{L^p}
    \end{equation*}
    and $u^\prime$ satisfies 
    \begin{equation}
        f-P_Tu^\prime = f-P_Tu + P_Tw
    \end{equation}
    and as $w \in \mK_T$, then $\|P_Tw\|_{L^p} \leq C e^{-\delta T} \|w\|_{W^{k,p}} \leq C e^{-(\delta-\epsilon)T}\|f\|_{L^p}$.
\end{proof}

\paragraph{} Now we have all the tools to prove Theorem \ref{thm:main}, in the case where $\lambda_0$ is a root of order $1$. Let $f \in L^p(F_T)$ be an arbitrary section. By Lemma \ref{lem:lproj}, there exists $\tilde f \in L^p(F_T)$ and $w_0 \in \mK_T^*$ such that $f = \tilde f + w_0$, $\tilde f$ is orthogonal to $\mK_T^*$ and with bounds:
\begin{equation*}
   \| \tilde f \|_{L^p} \leq (1+C_1) \|f\|_{L^p}, ~~ \|w_0\|_{L^p} \leq C_1\|f\|_{L^p}.
\end{equation*}
Moreover, by Proposition \ref{prop:approxsolv} there exists $u_0 \in W^{k,p}(E_T)$ orthogonal to $\mK_T^*$ and $f_1 \in L^p(F_T)$ such that:
\begin{equation*}
    \tilde f = P_T u_0 + f_1
\end{equation*}
with bounds:
\begin{equation*}
    \|u_0\|_{W^{k,p}} \leq C_3 T \|\tilde f\|_{L^p} \leq (1+C_1) C_3 T \|f\|_{L^p} 
\end{equation*}
and
\begin{equation*}
    \|f_1\|_{L^p} \leq C_2 e^{- \delta T} \| \tilde f \|_{L^p} \leq (1+C_1) C_2 e^{- \delta T} \|f\|_{L^p}.
\end{equation*}
Choose $T$ large enough such that $\eta = (1+C_1) C_2 e^{- \delta T} < 1$, and define $f_0 = f$. Inductively, we can construct sequences $\{f_n, ~ n \geq 0 \}$ in $L^p(F_T)$, $\{u_n, n \geq 0 \}$ in the $L^2$-orthogonal complement of $\mK_T$ in $W^{k,p}(E_T)$ and $\{w_n, ~ n \geq 0 \}$ in $\mK_T^*$ such that for all $n \geq 0$ we have:
\begin{equation}        \label{eq:ffn}
    f_n - f_{n+1} = P_T u_n + w_n
\end{equation}
with the bounds:
\begin{equation}		\label{eq:boundsfn}
    \|f_n\|_{L^p} \leq \eta^n \|f\|, ~~ \|u_n\|_{W^{k,p}} \leq \eta^n (1+C_1) C_3 T \|f\|_{L^p}, ~~ \text{and} ~~ \|w_n\|_{L^p} \leq \eta^n C_1 \|f\|_{L^p}.
\end{equation}
As $W^{k,p}(E_T)$ is complete and $\eta < 1$, the series $\sum u_n$ converges. Let $u = \sum_{n=0}^\infty u_n$. As each term of the series is orthogonal to $\mK_T$, $u$ belongs to the orthogonal space to $\mK_T$ in $W^{k,p}(E_T)$. In the same way, the series $\sum w_n$ converges to an element $w \in \mK^*_T$. It follows from the bounds \eqref{eq:boundsfn} that we have:
\begin{equation*}
    \|u\|_{W^{k,p}} \leq \frac{(1+C_1) C_2}{1-\eta}T \|f\|_{L^p}, ~~ \| w \|_{L^p} \leq \frac{C_1}{1-\eta} \|f\|_{L^p}
\end{equation*}
Further the map $W^{k,p}(E_T) \rightarrow L^p(F_T)$ is continuous, and therefore we can sum over $n$ in equality \eqref{eq:ffn} to obtain $f = P_T u + w$. 

This proves the existence part in Theorem \ref{thm:main} for $f$ in the $L^p$ range. In particular the index of $P_T$ satisfies the inequality:
\begin{equation}        \label{eq:indpin}
    \ind(P_T) \geq \dim \mK_T - \dim \mK^*_T.
\end{equation}
As the map $W^{k,p}(E_T) \rightarrow L^p(F_T)$ induced by $P_T$ is Fredholm, the uniqueness of $u \in W^{k,p}(E_T)$ orthogonal to $\mK_T$ and $w \in \mK_T^*$ satisfying $f = P_Tu + w$ is equivalent to proving that inequality \eqref{eq:indpin} is in fact an equality. But the same reasoning applied to $P^*_T$ yields:
\begin{equation*}
    \ind(P^*_T) \geq \dim \mK^*_T - \dim \mK_T.
\end{equation*}
Since $\ind(P^*_T) = - \ind(P_T)$, uniqueness in Theorem \ref{thm:main} follows.

To complete the proof of the theorem in the Sobolev range, it remains to remark that if one further assumes that $f \in W^{l,p}(F_T)$, Proposition \ref{prop:unifapriori} implies that 
\begin{equation*}
    \|u\|_{W^{k+l,p}} \leq C (\|f\|_{W^{l,p}} + \|u\|_{L^p}) \leq C \|f\|_{W^{l,p}} + C^\prime T \|f\|_{L^p}
\end{equation*}
for some constant $C^\prime > 0$.

\begin{rem}
    One of the advantages of treating the case of a root of order $1$ first is that we proved that the Sobolev constant does not grow more than linearly with $T$, whereas in the general case it is more complicated to find the optimal rate of growth of the constant. This will be useful in our applications in Section \ref{section:spectral} to derive the rate of decay of the low eigenvalues of the Laplacian.
\end{rem}

\paragraph{} Let us go back to the general case, before indicating how to modify our construction to treat the case of a single root of any order. We first prove our previous claim, that without any restrictions on the number of real roots of $P_0$ the characteristic system admits a solution if and only if $f$ is orthogonal to the substitute cokernel:

\begin{lem}     \label{lem:solchar}
    For any $T \geq 1$ and any $f \in L^p(F_T)$ orthogonal to $\mK^*_T$, the characteristic system \eqref{eq:linsys} admits a solution $v \in \mE$.
\end{lem}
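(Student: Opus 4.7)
The plan is to recognise the characteristic system \eqref{eq:linsys} as a finite-dimensional linear equation $\Psi_T(v) = (\phi_1, \phi_2)$ between finite-dimensional vector spaces, and to describe its image via the transpose map. The non-degenerate pairing $(\cdot, \cdot) : \mE \times \mE^* \to \C$ constructed in \S\ref{subsection:polysol} is what makes this reduction clean.

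Concretely, I would introduce
\begin{equation*}
    \Psi_T : \mE \longrightarrow \Hom(\mK_1^*, \C) \oplus \Hom(\mK_2^*, \C), \quad \Psi_T(v) = \bigl( g_1 \mapsto (v, g_{1,T}),\; g_2 \mapsto (v, g_{2,T}) \bigr),
\end{equation*}
so the system becomes $\Psi_T(v) = (\phi_1, \phi_2)$, where $\phi_i$ is the linear functional on $\mK_i^*$ given by the right-hand side of \eqref{eq:linsys}. Since all spaces involved are finite-dimensional, $\image \Psi_T$ is the annihilator of $\ker \Psi_T^*$, where $\Psi_T^* : \mK_1^* \oplus \mK_2^* \to \mE^*$ is the transpose under the identification of the dual of $\mE$ with $\mE^*$ via $(\cdot, \cdot)$. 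Unwinding the definition, $(v, \Psi_T^*(g_1, g_2)) = (v, g_{1,T}) + (v, g_{2,T})$ for all $v$, so non-degeneracy of the pairing forces $\Psi_T^*(g_1, g_2) = g_{1,T} + g_{2,T}$, and hence $\ker \Psi_T^* = \{(g_1, g_2) : g_{1,T} = - g_{2,T}\}$.

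Next I would match this kernel with the matching condition of \S\ref{subsection:substk} by a sign flip: setting $g_2' = -g_2$ (a bijection of $\mK_2^*$), the condition $g_{1,T} = -g_{2,T}$ becomes $g_{1,T} = g_{2',T}$, which is exactly the matching condition defining the substitute cokernel. The solvability criterion $(\phi_1, \phi_2) \in (\ker \Psi_T^*)^\perp$ then reads $\phi_1(g_1) = \phi_2(g_2')$ for every matching pair $(g_1, g_2')$. Substituting the explicit formulas for $\phi_1, \phi_2$, the terms $\langle (1-\chi) f_0, g_{i,T}\rangle_0$ cancel precisely because $g_{1,T} = g_{2',T}$ on a matching pair, leaving
\begin{equation*}
    \langle f,\; (1-\chi_{T+1}(\rho_1)) g_1 + (1-\chi_{T+1}(\rho_2)) g_2' \rangle = 0,
\end{equation*}
which is the orthogonality of $f$ to the substitute cokernel element built from $(g_1, g_2')$. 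Varying over all matching pairs exhibits the solvability condition as $f \perp \mK_T^*$, proving the lemma.

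There is no serious obstacle in this argument: once the pairing of \S\ref{subsection:polysol} and its non-degeneracy are available, everything reduces to bookkeeping in finite-dimensional duality. The only subtle point is the sign flip relating $\ker \Psi_T^*$ to the matching condition, which is the reason the substitute cokernel arises naturally from this abstract setup.
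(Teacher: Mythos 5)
Your proposal is correct in substance, and it takes a genuinely different route from the paper. The paper's proof of Lemma~\ref{lem:solchar} is explicit: it picks complements $\mE'_T$ of $\image\kappa_{1,T}\cap\image\kappa_{2,T}$ in $\mE$ and $\mE^*_{i,T}$ of $\image\kappa^*_{1,T}\cap\image\kappa^*_{2,T}$ in $\image\kappa^*_{i,T}$, decomposes $\mE'_T = \mE_{0,T}\oplus\mE_{1,T}\oplus\mE_{2,T}$, rewrites the system as three blocks indexed by $\mK^*_{1,T,m}$, $\mK^*_{1,T,\perp}$, $\mK^*_{2,T,\perp}$, and argues block-by-block invertibility from the non-degeneracy of the induced sub-pairings. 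You instead package the whole system into one finite-dimensional linear map $\Psi_T$ and read off its image as the annihilator of $\ker\Psi_T^T$, which collapses the bookkeeping into a single duality statement. Both approaches use exactly the non-degeneracy of the pairing from \S\ref{subsection:polysol} as the key input; yours is cleaner for the pure existence claim, while the paper's explicit decomposition is what is re-used in Proposition~\ref{prop:approxsolv} to get quantitative bounds, so it earns its keep in context.

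One caveat you should be aware of: the second equation of~\eqref{eq:linsys} as printed carries a sign inconsistency with Corollary~\ref{cor:obst}. Setting $\langle f_2',g_2\rangle = 0$ in Corollary~\ref{cor:obst} yields $(v,g_{2,T}) = -\langle f,(1-\chi_{T+1}(\rho_2))g_2\rangle - \langle(1-\chi)f_0,g_{2,T}\rangle_0$, with a minus sign on the second term, whereas~\eqref{eq:linsys} prints a plus. The cancellation you invoke in the last step (the $\langle(1-\chi)f_0,g_{i,T}\rangle_0$ terms disappearing on a matching pair) only works with the minus sign; with the plus sign the residual $2\langle(1-\chi)f_0,g_{1,T}\rangle_0$ does not vanish. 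The same correction is needed to make the paper's own parenthetical remark after~\eqref{eq:linsys} (that solvability forces $f\perp\mK_T^*$) literally go through. So your proof is right, but you are implicitly using the sign that follows from Corollary~\ref{cor:obst} rather than the one printed in~\eqref{eq:linsys}; it would be worth flagging this when you write it up.
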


\begin{proof}
    Let us use the following notations for $u_1 \in \mK_1$ and $g_1 \in \mK^*_1$:
    \begin{equation*}
        \kappa_{1,T}[u_1](t,x) = \kappa_1[u_1](t+T+1,x), ~~ \kappa^*_{1,T}[u_1](t,x) = \kappa^*_1[u_1](t+T+1,x)
    \end{equation*}
    and for $u_2 \in \mK_2$ and $g_2 \in \mK^*_2$:
    \begin{equation*}
        \kappa_{2,T}[u_2](t,x) = \kappa_2[u_2](t-T-1,x), ~~ \kappa^*_{2,T}[u_2](t,x) = \kappa^*_2[u_2](t-T-1,x).
    \end{equation*}
    As the  pairing $(\cdot, \cdot)$ is invariant by translation, it is still true that $\image \kappa_{i,T}$ is the orthogonal space to $\image \kappa^*_i$. Thus we may proceed exactly as above, choosing a complement $\mE'_T$ of $\image \kappa_{1,T} \cap \image \kappa_{2,T}$ in $\mE$, and complements $\mE^*_{i,T}$ of  $\image \kappa^*_{1,T} \cap \image \kappa^*_{2,T}$ in $\kappa^*_{i,T}$. Once these arbitrary choices are made we can decompose:
    \begin{equation*}
        \mE'_T = \mE_{0,T} \oplus \mE_{1,T} \oplus \mE_{2,T}
    \end{equation*}
    where $\mE_{i,T} = \image \kappa_{i,T} \cap \mE'_T$ and $\mE_{0,T}$ is the orthogonal space of $\mE_{1,T} \oplus \mE_{2,T}$ in $\mE'_T$. Hence the non-degenerate pairing $\mE'_T \times (\image \kappa_{1,T} + \image \kappa_{2,T}) \rightarrow \C$ induced by $(\cdot, \cdot)$ decomposes as the orthogonal sum of the non-degenerate pairings:
    \begin{equation*}
        \mE_{0,T} \times (\image \kappa_{1,T} \cap \image \kappa_{2,T}) \rightarrow \C, ~~ \mE_{1,T} \times \mE^*_{2,T} \rightarrow \C ~~ \text{and} ~~ \mE_{2,T} \times \mE^*_{1,T} \rightarrow \C.
    \end{equation*}
    As in the proof of Proposition \ref{prop:approxsolv}, for $i=1,2$ the orthogonal space $\mK_{i,+}^*$ of $\mK_{i,0}$ in $\mK_i$ is isomorphic to $\image \kappa^*_{i,T}$, so that the decomposition $\image \kappa^*_{i,T} = \image \kappa^*_{1,T} \cap \image \kappa^*_{2,T} \oplus \mE_{i,T}^*$ induces a corresponding decomposition $\mK_{i,+}^* = \mK^*_{i,T,m} \oplus \mK^*_{i,T,\perp}$. Thus if $f \in L^p(M_T)$ is orthogonal to the substitute cokernel $\mK^*_T$ the characteristic system can be written as:
    \begin{equation*}
        \begin{cases}
            (v_0, \kappa^*_{1,T}[g_0]) = \langle f,(1-\chi_{T+1}(\rho_1)) g_0 \rangle - \langle (1-\chi) f_0, \kappa^*_1[g_0] \rangle_0, & \forall g_0 \in \mK^*_{1,T,m} \\
            (v_1, \kappa^*_{1,T}[g_1]) = \langle f,(1-\chi_{T+1}(\rho_1)) g_1 \rangle - \langle (1-\chi) f_0, \kappa^*_1[g_1] \rangle_0, & \forall g_1 \in \mK^*_{1,T,\perp} \\
            (v_2, \kappa^*_{2,T}[g_2]) = - \langle f,(1-\chi_{T+1}(\rho_2)) g_2 \rangle + \langle (1-\chi) f_0, \kappa^*_2[g_2] \rangle_0, & \forall g_2 \in \mK^*_{2,T,\perp}
        \end{cases}
    \end{equation*}
    where $v = v_0 + v_1 + v_2 \in \mE_{0,T} \oplus \mE_{1,T} \oplus \mE_{2,T}$. Given the non-degeneracy of the above pairings this system is manifestly invertible. 
\end{proof}

Despite the fact that we can solve the characteristic system whenever $f$ is orthogonal to the substitute cokernel $\mK_T$, this does not imply that we can find a solution $v \in \mE$ with bounds of the form $\| v \| \leq C(T) \|f\|_{L^p}$ with a good control on $C(T)$, which was a key argument in the previous construction. This is due to the fact that the characteristic system is in general underdetermined, and only becomes determined after a choice of arbitrary complements $\mE'_T$ of $\image \kappa_{1,T} \cap \image \kappa_{2,T}$ in $\mE$ and $\mE^*_{i,T}$ of  $\image \kappa^*_{1,T} \cap \image \kappa^*_{2,T}$ in $\image \kappa^*_{i,T}$, using the notations introduced in the above proof. In the case where $P_0$ has a single root of order $1$, this was not problematic as we could simply make any arbitrary choice independently of $T$, but in general we cannot make such a consistent choice. This is especially true at values of $T$ where the rank of the characteristic system drops. 

As we discussed in \S\ref{subsection:substk}, if $P_0$ has only one real root, then in a good basis the coefficients of the characteristic system are polynomial in $T$. Therefore, the rank of the system is constant whenever $T$ is large enough and we can fix a complement of its kernel independent of $T$. On this complement, the system can be inverted with polynomial control on the norm. Thus if $f \in L^p(F_T)$ is orthogonal to the substitute cokernel we can find solutions of the characteristic system with $\| v \| \leq C T^\beta \| f\|_{L^p}$ for some exponent $\beta > 0$. In the same way, all the matching conditions can be expressed as linear equations with coefficients depending polynomially on $T$. Therefore, the norm of the $L^2$-orthogonal projections onto $\mK_T$ and $\mK^*_T$ do not grow more than polynomially. Then we can use the same argument as above to prove Theorem \ref{thm:main} in the general case.

In fact, any assumptions ensuring that we can invert the characteristic system with less than exponential growth on the norm after fixing complements of its image and kernel, and that the norm of the projections onto $\mK_T$ and $\mK^*_T$ do not grow too quickly, would yield the same result. Moreover, in cases where the rank of the system is not constant we may also obtain good bounds by staying away from the values of $T$ where it drops. However we do not need to consider more complicated cases for our applications.

\section{Spectral aspects}      \label{section:spectral}

In this section, we want to interpret our results from a spectral perspective. Indeed, for self-adjoint operators the approximate kernel can be regarded as a finite-dimensional space associated with very low eigenvalues of the operator $P_T$. For the Laplacian, we shall see in \S\ref{subsection:approxharm} that the substitute kernel is a good approximation of the space of harmonic forms. Orthogonally to the space of harmonic forms, the results of the previous section imply a bound in $O(T^2)$ on the $L^2$-norm of the inverse of $\Delta_T$. In particular if $0$ is a root of the Laplacian acting on $q$-forms on $\R \times X$, then the lowest nonzero eigenvalues of $\Delta_T$ acting on $q$-forms admit a lower bound of the form $\frac{C}{T^2}$. In \S\ref{subsection:spectrum} we study the density of eigenvalues with fastest decay rate and prove Theorem \ref{thm:density}.

    \subsection{Approximate harmonic forms}     \label{subsection:approxharm}

\paragraph{} We begin by a review of standard properties of the Laplacian on EAC manifolds (see for instance \cite[6.4]{melrose1993atiyah}). Let $(Z,g)$ be an oriented EAC Riemannian manifold of rate $\mu >0$, $Y = \R \times X$ its asymptotic cylinder and $\rho$ a cylindrical coordinate function. The space $\mH^q$ of bounded closed and co-closed $q$-forms is equal to the space of bounded harmonic $q$-forms. Moreover, there is a direct sum decomposition:
\begin{equation*}
    \mH^q = \mH^q_0 \oplus \mH^q_d \oplus \mH^q_{d^*} 
\end{equation*}
where $\mH_0^q$ is the space of decaying harmonic $q$-forms, $\mH^q_d$ is the space bounded exact harmonic $q$-forms and $\mH^q_{d^*}$ the space of bounded co-exact harmonic $q$-forms. On the other hand, the map $\kappa_q$ mapping a bounded harmonic $q$-form to its translation-invariant expansion induces two maps
\begin{equation*}
    \alpha_q : \mH^q \rightarrow H^{q}(X), ~~~ \beta_q : \mH^q \rightarrow H^{q-1}(X)
\end{equation*}
such that $\kappa_q(\eta) = \alpha_0 + dt \wedge \beta_0$ where $\alpha_0$ and $\beta_0$ are the harmonic representatives of $\alpha_q(\eta)$ and $\beta_q(\eta)$ respectively. The map $\alpha_q$ can be factorised as $\mH^q \rightarrow H^q(Z) \rightarrow H^q(X)$, where any $q$-form in $\mH^q$ is mapped to its de Rham cohomology class in $H^q(Z)$ and the map $H^q(Z) \rightarrow H^q(X)$ comes from the long exact sequence of the pair $(Z, X)$. By \cite[Proposition 4.9]{atiyah1975spectral}, $\mH^q_0$ is mapped isomorphically to the image of the map $H^q_c(Z) \rightarrow H^q(Z)$ coming from the same exact sequence. In particular this implies that $\mH^q_0 \oplus \mH^q_d \subset \ker \alpha_q$, and by considering Hodge duals it follows that $\mH^q_0 \oplus \mH^q_{d^*} \subset \ker \beta_q$. As the kernel of the map $\kappa_q$ is $\mH^q_0$ this implies that we have in fact:
\begin{equation*}
    \ker \alpha_q = \mH^q_0 \oplus \mH^q_d, ~~ \ker \beta_q = \mH^q_0 \oplus \mH^q_{d^*}.
\end{equation*}
By \cite[Proposition 6.18]{melrose1993atiyah}, the map $\mH^q_0 \oplus \mH^q_{d^*} \rightarrow H^q(Z)$ is an isomorphism, and $\alpha_q$ maps $\mH^q_{d^*}$ isomorphically onto the image of the map $H^q(Z) \rightarrow H^q(X)$ coming from the long exact sequence of $(Z, X)$.

\paragraph{}  \label{par:closedforms} Let $0 \leq q \leq \dim Z$ and denote by $\sigma_q$ the minimum of $\mu$ and of the square roots of the lowest eigenvalues of the Laplacian acting on $(q-1)$- and $q$-forms on $X$. Any bounded closed and co-closed $q$-form $\eta$ on $Z$ is asymptotic to a translation-invariant form $\eta_0 = \alpha_0 + dt \wedge \beta_0$, up to terms in $O(e^{-\delta \rho})$ for any $\delta < \sigma_q$. With the above notations $(\alpha_0, \beta_0)$ are the harmonic representatives of $(\alpha_q(\eta), \beta_q(\eta))$. It is a standard fact that there exists a $(q-1)$-from $\xi$ on $Z$ such that $\eta-\eta_0 = d\xi$ in the domain $\{\rho > 1\}$, with $|\nabla^l \xi| = O(e^{-\delta \rho})$ for any $l \geq 0$ and $\delta < \sigma_q$. A suitable $\xi$ can be constructed as follows. Identify the region $\{ \rho > 1 \}$ with the cylinder $(1,\infty) \times X$, and write $\eta-\eta_0 = \alpha(t) + dt \wedge \beta(t)$ where $\alpha$, $\beta$ and all their derivatives have the usual exponential decay. As  $\eta$ and $\eta_0$ are closed this implies:
\begin{equation*}
    d_X \alpha(t) = 0 = \del_t \alpha(t) - d_X\beta(t)
\end{equation*}
for all $t > 1$, where $d_X$ denotes the exterior differential on $X$. Hence we can define $\xi$ in the domain $\{ \rho > 1 \}$ by
\begin{equation*}
    \xi(t,x)= \int_{+\infty}^t \beta(\tau,x) d \tau, ~~~ \forall (t,x) \in (1,\infty) \times X .
\end{equation*}
This $(q-1)$-form $\xi$ allows us to build a $1$-parameter family of closed $q$-forms
\begin{equation*}
    \eta_T = \eta- d(\chi_T(\rho) \xi)
\end{equation*}
interpolating between $\eta$ when $\rho < T - \frac 1 2$ and $\eta_0$ when $\rho > T + \frac 1 2$, which all represent the cohomology class of $\eta$ in $H^q(Z)$. Moreover, the difference $\eta_T-\eta$ and all its derivatives satisfy uniform bounds in $O( e^{-\delta T})$ for any $0 < \delta < \sigma_q$.

Let $(Z_1,g_1)$ and $(Z_2,g_2)$ be two matching EAC manifolds, and consider the $1$-parameter family of compact Riemannian manifolds $(M_T,g_T)$ obtained by the gluing procedure explained in \S\ref{subsection:gluingpb}. We want to control the mapping properties of the associated operators $d+d^*_T$ and $\Delta_T$ as $T \rightarrow \infty$. Strictly speaking, these operators differ from the operators obtained by gluing $d+d^*_1$ with $d+d^*_2$ and $\Delta_1$ with $\Delta_2$ in the gluing region $\{| \rho_T | \leq \frac 3 2 \}$. Nevertheless, the results of the previous parts still apply as the coefficients of the difference and and all their derivatives have exponential decay with $T$. It is convenient to slightly modify our definition of approximate kernel. If $(\eta_1,\eta_2)$ is a matching pair of harmonic forms, define a closed form on $M_T$ by:
\begin{equation*}
    \eta_T = 
    \begin{cases}
        \eta_{1,T} & \text{if} ~ \rho_T \leq - \frac 1 2 \\
        \eta_{2,T} & \text{if} ~ \rho_T \geq \frac 1 2 \\
        \eta_0 & \text{if} ~ |\rho_T| \leq \frac 1 2
    \end{cases}
\end{equation*}
where both $\eta_1$ and $\eta_2$ are asymptotic to $\eta_0$ and $\eta_{1,T}$ and $\eta_{2,T}$ are closed forms constructed as above. It follows that $\eta_T$ is closed. We denote by $\mH^q_T$ the finite-dimensional space of $q$-forms constructed as above from a pair of matching $q$-forms. Again, this differs from our previous definition of substitute kernel only up to terms that are bounded in $O(e^{-\delta T})$ as well as all their derivatives for any $\delta < \sigma_q$. Hence our results, and in particular Theorem \ref{thm:main}, still apply. As the elements of $\mH^q_T$ are closed there is a well-defined map:
\begin{equation*}
    \mH^q_T \rightarrow H^q(M_T)
\end{equation*}
sending every element to its de Rham cohomology class. The key point is the following theorem \cite[Theorem 3.1]{norstrom2009deformations}:

\begin{thm}
    For $T$ large enough, the map $\mH^q_T \rightarrow H^q(M_T)$ is an isomorphism.
\end{thm}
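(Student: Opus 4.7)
My plan is to establish the theorem by checking that both sides have the same dimension and that the map is injective, from which bijectivity follows. Since the elements of $\mH^q_T$ are closed by construction, the map to $H^q(M_T)$ is well-defined.

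First I would compute $\dim \mH^q_T$. A matching pair $(\eta_1,\eta_2)$ is one where the asymptotic translation-invariant limits coincide, i.e.\ $\kappa_{q,1}(\eta_1) = \kappa_{q,2}(\eta_2)$ in $\mE_q \simeq H^q(X) \oplus H^{q-1}(X)$ (with the appropriate orientation reversal on the second factor baked into the definition of $\kappa_{q,2}$). Since $0$ is a simple real root of $\Delta_Y$ on $q$-forms in the sense that the polyhomogeneous solutions of rate $0$ are translation-invariant, the matching condition is independent of $T$, and the fibre-product structure gives
\begin{equation*}
  \dim \mH^q_T \;=\; \dim \mH^q_{1,0} + \dim \mH^q_{2,0} + \dim\bigl(\image \kappa_{q,1} \cap \image \kappa_{q,2}\bigr).
\end{equation*}

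Next I would compute $b^q(M_T)$ via Mayer--Vietoris for the cover $M_T = U_1 \cup U_2$ where $U_i$ is an open thickening of the side $\{(-1)^{i+1}\rho_T \leq \tfrac{1}{2}\}$; each $U_i$ is homotopy equivalent to $Z_i$, and $U_1 \cap U_2$ is homotopy equivalent to the cross-section $X$. The relevant portion of the sequence is
\begin{equation*}
  H^{q-1}(Z_1) \oplus H^{q-1}(Z_2) \xrightarrow{\phi_{q-1}} H^{q-1}(X) \xrightarrow{\partial} H^q(M_T) \rightarrow H^q(Z_1) \oplus H^q(Z_2) \xrightarrow{\phi_q} H^q(X),
\end{equation*}
where $\phi_q(a,b) = r_1(a) - r_2(b)$ for the restriction maps $r_i \colon H^q(Z_i) \to H^q(X)$. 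Using the EAC Hodge decomposition recalled above ($H^q(Z_i) \simeq \mH^q_{i,0} \oplus \mH^q_{i,d^*}$) together with the identification of $r_i$ with the map $\eta_i \mapsto [\alpha_{q,i}(\eta_i)]$, I would rewrite $\image \phi_q$, $\image \phi_{q-1}$ as combinations of $\alpha_{q,i}(\mH^q_i)$ and $\beta_{q,i}(\mH^q_i)$. A dimension chase, using $\dim \image \kappa_{q,i} = \dim \alpha_{q,i}(\mH^q_i) + \dim \beta_{q,i}(\mH^q_i)$ and Poincar\'e--Lefschetz duality on each $Z_i$ (to relate the $q$ and $q-1$ level contributions), should produce exactly
\begin{equation*}
  b^q(M_T) = \dim \mH^q_{1,0} + \dim \mH^q_{2,0} + \dim\bigl(\image \kappa_{q,1} \cap \image \kappa_{q,2}\bigr).
\end{equation*}

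Finally, to obtain injectivity I would argue as follows. Suppose $\eta_T \in \mH^q_T$ is built from $(\eta_1, \eta_2)$ and satisfies $[\eta_T] = 0$ in $H^q(M_T)$. Restriction to $U_i \simeq Z_i$ yields $[\eta_i] = 0$ in $H^q(Z_i)$, since $\eta_T$ agrees with $\eta_{i,T}$ there and $\eta_{i,T}$ is in the cohomology class of $\eta_i$ by the construction in \ref{par:closedforms}. By $H^q(Z_i) \simeq \mH^q_{i,0} \oplus \mH^q_{i,d^*}$, this forces $\eta_i \in \mH^q_{i,d}$, hence in particular $\alpha_{q,i}(\eta_i) = 0$ and $\kappa_{q,i}(\eta_i) \in \{0\} \oplus H^{q-1}(X)$. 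Applying the dual argument in degree $q-1$ via Hodge duality (using $*\eta_T \in \mH^{n-q}_T$ and the fact that the exact/co-exact roles swap) yields $\beta_{q,i}(\eta_i) = 0$ as well. Thus $\kappa_{q,i}(\eta_i) = 0$, so $\eta_i \in \mH^q_{i,0}$; but a decaying harmonic form on $Z_i$ whose cohomology class vanishes in $H^q(Z_i)$ must be zero since $\mH^q_{i,0} \hookrightarrow H^q(Z_i)$ is injective. Hence $\eta_T = 0$ and the map is injective, and by the dimension count an isomorphism.

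The main obstacle will be making the dimension bookkeeping in the second step precise. In particular, identifying $\image r_i$ with $\alpha_{q,i}(\mH^q_i)$ and carefully accounting for the $\beta$-contribution in $H^{q-1}(X)$ via Poincar\'e--Lefschetz duality across both $Z_i$ simultaneously is where the two subspaces $\image \kappa_{q,1}$ and $\image \kappa_{q,2}$ must be shown to interact in exactly the way predicted by Mayer--Vietoris. I expect the cleanest route is to work with the combined map $\kappa_q = (\kappa_{q,1}, -\kappa_{q,2}) \colon \mH^q_1 \oplus \mH^q_2 \to \mE_q$ and compare its kernel and cokernel directly with the terms appearing in the Mayer--Vietoris sequence.
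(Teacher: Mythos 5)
Your dimension count in the first two steps matches the paper's route and should carry through, but your injectivity argument has a genuine gap at the Hodge-duality step. The claim $*\eta_T \in \mH^{n-q}_T$ is false: the elements of $\mH^q_T$ are built as patchings of the \emph{closed} extensions $\eta_{i,T}$ of the harmonic $\eta_i$, so $\eta_T$ is closed but only approximately co-closed, and $*_T\eta_T$ is co-closed but generally not closed, hence not in $\mH^{n-q}_T$. Replacing $*_T\eta_T$ by the genuine element of $\mH^{n-q}_T$ built from the (true) matching pair $(*\eta_1,*\eta_2)$ does not save the argument either: you would then need $[\eta_T]=0$ to force the cohomology class of \emph{that} element to vanish, and the two forms only agree up to $O(e^{-\delta T})$ corrections, which cannot pin a fixed topological pairing to be exactly zero without further input. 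After you correctly deduce $\eta_i\in\mH^q_{i,d}$, you only have $\alpha_{q,i}(\eta_i)=0$; the step $\beta_{q,i}(\eta_i)=0$ is precisely the non-trivial, $T$-dependent part of the theorem. In the paper's sketch this is exactly the step deferred to Nordstr\"om's Theorem 3.1, which identifies the class $[\eta_T]$ of a glued exact matching pair with the image of $[\beta_{q,1}(\eta_1)]$ under the Mayer--Vietoris connecting map $\partial : H^{q-1}(X)\to H^q(M_T)$, and then uses the orthogonality of $\image\beta_{1,q}\cap\image\beta_{2,q}$ with $\image\alpha_{1,q-1}+\image\alpha_{2,q-1}$ (coming from Lemma \ref{lem:valuepuv}) to conclude that $\beta$ vanishes.

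You also miss the paper's slicker alternative to injectivity once the dimension count is done. By Theorem \ref{thm:main} (applied to $P_T=\Delta_T$, which is self-adjoint so $\mK_T=\mK_T^*=\mH^q_T$ up to the identifications made in \S\ref{subsection:approxharm}), the Laplacian maps $(\mH^q_T)^\perp\cap W^{2,2}$ isomorphically onto a complement of $\mH^q_T$ in $L^2$, and hence $\image\Delta_T$ has codimension at least $\dim\mH^q_T = b^q(M_T)$. If some nonzero $\eta_T\in\mH^q_T$ were exact, it would be orthogonal to $\mH^q(M_T)$ and thus lie in $\image\Delta_T$; combined with the complement $\Delta_T\bigl((\mH^q_T)^\perp\bigr)$ this forces $\image\Delta_T$ to have codimension strictly less than $b^q(M_T)$, contradicting $\image\Delta_T=\mH^q(M_T)^\perp$. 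Your proposal would need to adopt one of these two routes for the injectivity step.
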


Let us briefly sketch the proof of this theorem. It relies on a close examination of the Mayer-Vietoris sequence:
\begin{equation}
    \ldots \rightarrow H^{q-1}(Z_1) \oplus H^{q-1}(Z_2) \rightarrow H^{q-1}(X) \rightarrow H^q(M_T) \rightarrow H^q(Z_1) \oplus H^q(Z_2) \rightarrow \ldots
\end{equation}
As the space of approximate harmonic $q$-forms $\mH^q_T$ is isomorphic to the space
\begin{equation*}
    H^q_c(Z_1) \oplus H^q_c(Z_2) \oplus \image \alpha_{1,q} \cap \image \alpha_{2,q} \oplus \image \beta_{1,q} \cap \image \beta_{2,q}
\end{equation*}
and since $\ker \beta_{i,q} \simeq H^q(Z_i)$, it is clear that the restriction of $\mH^q_T \rightarrow H^q(M_T)$ to the space obtained by matching pairs in $\mH^q_{i,0} \oplus \mH^q_{i,d^*}$ yields an isomorphism:
\begin{equation*}
     H^q_c(Z_1) \oplus H^q_c(Z_2) \oplus \image \alpha_{1,q} \cap \image \alpha_{2,q} \simeq \image (H^q(M_T) \rightarrow H^q(Z_1) \oplus H^q(Z_2)).
\end{equation*}
Moreover, the subspace of $\mH^q_T$ obtained by gluing matching pairs of bounded exact harmonic $q$-forms, which is isomorphic to $\image \beta_{1,q} \cap \image \beta_{2,q}$, maps into the image of $H^{q-1}(X) \rightarrow H^q(M_T)$. By Lemma \ref{lem:valuepuv}, $\image \beta_{1,q} \cap \image \beta_{2,q}$ is the orthogonal space of $\image \alpha_{1,q-1} \oplus \image \alpha_{2,q-1}$  for the inner product induced by the $L^2$-product on harmonic representatives, and therefore $\image \beta_{1,q} \cap \image \beta_{2,q}$ has the same dimension as the kernel of $H^q(M_T) \rightarrow H^q(Z_1) \oplus H^q(Z_2)$. Thus it only remains to prove that the subspace of $\mH^q_T$ obtained by gluing matching pairs of exact $q$-forms maps isomorphically onto the kernel of the map $H^q(M_T) \rightarrow H^q(Z_1) \oplus H^q(Z_2)$ coming from the exact sequence. In \cite[Theorem 3.1]{nordstrom2008deformations} it is proven that this is the case for $T$ large enough.

Alternatively, one could also argue using Theorem \ref{thm:main}. The spaces $\mH^q_T$ and $H^q(M_T)$ have same dimension by the above argument. Moreover, the Laplacian $\Delta_T$ maps the orthogonal space of $\mH^q_T$ in $W^{2,2}(\Lambda^q T^*M_T)$ isomorphically onto a complement of $\mH^q_T$ in $L^2(\Lambda^qT^*M_T)$, for $T$ large enough. Hence the map $\mH^q_T \rightarrow H^q(M_T)$ must be an isomorphism for large $T$, since otherwise there would be a non-trivial exact form in $\mH^q_T$ and the image of the Laplacian would have codimension strictly less than $b^q(M_T)$ in $L^2(\Lambda^qT^*M_T)$.

\begin{rem}
    The above theorem remains true when considering the variant of our gluing problem explained in Remark \ref{rem:twist}, where the matching condition between the two building blocks is twisted by an isometry $\gamma : X \rightarrow X$ of the cross-section.
\end{rem}

\paragraph{} As a consequence, the $L^2$-projection of the space $\mH^q_T$ of approximate harmonic $q$-forms onto the space $\mH^q(M_T)$ of genuine harmonic $q$-forms is an isomorphism for $T$ large enough. It is natural to ask how close to their harmonic part the elements of $\mH^q_T$ are. If $\eta \in \mH^q_T$ is decomposed in harmonic and exact parts $\eta = \xi + d\nu$, then:
\begin{equation*}
    \|\Delta_T d \nu \|_{L^2} = \|\Delta_T(\eta-\xi)\|_{L^2} =  \|\Delta_T \eta\|_{L^2} = O \left(e^{-\delta T} \|\eta\|_{L^2} \right)
\end{equation*}
for any $\delta < \sigma_q$. By Theorem \ref{thm:main}, there exists a $(q-1)$-form $\eta^\prime$ with $\Delta_T \eta^\prime = \Delta_T d\nu$ and satisfying a bound of the form 
\begin{equation*}
    \| \eta^\prime \|_{W^{2,2}} \leq C T^\beta \|\Delta_T d\nu \|_{L^2} \leq C^\prime e^{-\delta T}\|\eta\|_{L^2}
\end{equation*}
for some constant $C^\prime$. As $\eta^\prime - d\nu$ is harmonic it follows that $\|d\nu\|_{L^2} \leq \|\eta^\prime\|_{L^2}$, which yields:
\begin{equation*}
    \|\eta-\xi\|_{L^2} = O \left( e^{-\delta T}\|\eta\|_{L^2} \right)
\end{equation*}
for any $\delta < \sigma_q$. Thus not only is the $L^2$-projection of $\mH^q_T$ onto $\mH^q(M_T)$ an isomorphism, but the norm of the projection is close to $1$, up to $O(e^{-\delta T})$ terms. Once this inequality is established in $L^2$, the a priori estimates of Proposition \ref{prop:unifapriori} imply that 
\begin{equation*}
    \|\eta-\xi\|_{W^{l,2}} = O \left( e^{-\delta T} \|\eta\|_{L^2} \right) 
\end{equation*}
for any $l \geq 0$. Then, the Sobolev embedding theorem (see Remark \ref{rem:sobemb}) yields estimates:
\begin{equation*}
    \|\eta-\xi\|_{W^{l,p}} = O \left( e^{-\delta T} \|\eta\|_{L^p} \right), ~~ \|\eta-\xi\|_{C^l} = O \left( e^{-\delta T} \|\eta\|_{C^0} \right)  
\end{equation*}
for any $l \geq 0$, $p > 1$ and $0 < \delta < \sigma_q$. 

By the same bootstrapping argument we can prove that if $\nu \in W^{l,p}(\Lambda^qT^*M_T)$ is orthogonal to $\mathscr{H}^q_T$ and $\nu^\prime$ is the unique $q$-form orthogonal to $\mathscr{H}^q(M_T)$ such that $\Delta_T \nu = \Delta_T \nu^\prime$ (or equivalently $(d+d^*_T) \nu = (d+d^*_T) \nu^\prime$) then 
\begin{equation*} 
    \| \nu - \nu^\prime \|_{W^{l,p}} = O(e^{-\delta T} \| \nu \|_{W^{l,p}}) .
\end{equation*}
These remarks, the fact that the Laplacian $\Delta_T$ is the square of the operator $d+d^*_T$ whose only root has order $1$ and Theorem \ref{thm:main} imply the following:

\begin{cor}         \label{cor:laplacian}
    Let $p > 1$ and $l \in \N$, and assume that $0$ is an indicial root of the Laplacian action on $q$-forms on $Y$, that is $b^{q-1}(X) + b^q(X) > 0$. Then there exist constants $C, C^\prime > 0$ such that, for large enough $T$ and any $\eta \in W^{l,p}(\Lambda^q T^*M_T)$ orthogonal to $\mH^q(M_T)$, the unique solution $\eta^\prime \in W^{2+l,p}(\Lambda^q T^*M_T)$ of $\Delta \eta^\prime = \eta$ orthogonal to $\mH^q(M_T)$ satisfies:
    \begin{equation*}
        \|\eta^\prime\|_{W^{l+2,p}} \leq C \|\eta\|_{W^{l,p}} + C^\prime T^2 \|\eta\|_{L^p} .
    \end{equation*}
\end{cor}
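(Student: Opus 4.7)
The plan is to exploit the factorisation $\Delta_T = D_T^2$ with $D_T = d + d^*_T$, whose indicial operator $d_Y + d^*_Y$ has a unique real root at $0$ of order one (Example \ref{ex:resolvent}). Theorem \ref{thm:main} thus applies with the explicit exponent $\beta = 1$ obtained in the order-one case, and since $D_T$ is self-adjoint its substitute kernel and cokernel both coincide with $\mH_T := \bigoplus_{q'} \mH^{q'}_T$. A key observation is that whenever $f$ is $L^2$-orthogonal to every true harmonic space $\mH^{q'}(M_T)$, the residual $w \in \mH_T$ in the decomposition $f = D_T u + w$ of Theorem \ref{thm:main} must vanish: being of the form $D_T(\tilde u - u)$ for any $\tilde u$ with $D_T \tilde u = f$, it lies in the image of $D_T$, hence in the orthogonal complement of all $\mH^{q'}(M_T)$; but the $L^2$-projection $\mH_T \to \bigoplus_{q'} \mH^{q'}(M_T)$ is an isomorphism for large $T$ (preceding discussion), forcing $w = 0$. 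Thus for any such $f$ there is $u \perp \mH_T$ with $D_T u = f$ exactly, and $\|u\|_{W^{l+1,p}} \leq C \|f\|_{W^{l,p}} + C' T \|f\|_{L^p}$.

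Given $\eta \perp \mH^q(M_T)$ I would first apply this inverse to obtain $\tilde u \perp \mH_T$ with $D_T \tilde u = \eta$. The harmonic part $\xi$ of $\tilde u$ (its $L^2$-projection onto $\bigoplus_{q'} \mH^{q'}(M_T)$) is exponentially small: for any true harmonic form $\psi$ with corresponding $h_\psi \in \mH_T$ we have $\langle \tilde u, \psi \rangle = \langle \tilde u, \psi - h_\psi \rangle = O(e^{-\delta T}) \|\tilde u\|_{L^p}$ by H\"older. Applying the inverse again to $\tilde u_0 := \tilde u - \xi$, which now lies in the correct orthogonal complement, produces $v_0 \perp \mH_T$ with $D_T v_0 = \tilde u_0$, so that $\Delta_T v_0 = D_T \tilde u_0 = \eta$ (as $\xi$ is harmonic). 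Extracting the degree-$q$ component $v_0^{(q)}$ and subtracting its exponentially small projection onto $\mH^q(M_T)$ yields the required $\eta' \perp \mH^q(M_T)$ solving $\Delta_T \eta' = \eta$.

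The bound follows by combining the two applications of Theorem \ref{thm:main}: using the $l = 0$ case in the first step to control $\|\tilde u\|_{L^p} \leq (C + C' T) \|\eta\|_{L^p}$, the second step yields
\begin{equation*}
    \|v_0\|_{W^{l+2,p}} \leq C \|\eta\|_{W^{l,p}} + C' T^2 \|\eta\|_{L^p},
\end{equation*}
and the exponentially small corrections from $\xi$ and from the projection of $v_0^{(q)}$ onto $\mH^q(M_T)$ are absorbed into the constants for $T$ large. Uniqueness of $\eta'$ is automatic since $\Delta_T$ is an isomorphism on $\mH^q(M_T)^\perp$, and the H\"older statement is proved identically using the H\"older version of Theorem \ref{thm:main}. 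The main technical point is securing uniform-in-$T$ control of the $L^{p'}$ and $W^{l,p}$ norms on the finite-dimensional space $\mH_T$ and of its comparison with the genuine harmonic forms; this follows from the explicit construction of $\mH_T$ by cutting off pairs of matching EAC harmonic forms, together with the uniform Sobolev embeddings on $M_T$ noted in Remark \ref{rem:sobemb}.
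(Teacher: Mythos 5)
Your proposal correctly fleshes out the same approach the paper takes: the paper's proof of this corollary is essentially a one-sentence sketch ("These remarks, the fact that the Laplacian $\Delta_T$ is the square of the operator $d+d^*_T$ whose only root has order $1$ and Theorem \ref{thm:main} imply the following"), and you supply precisely the missing bookkeeping — applying the order-one case of Theorem \ref{thm:main} twice to $D_T = d+d^*_T$, and using the $O(e^{-\delta T})$-comparison between $\mH_T$ and the genuine harmonic forms (established in the paragraphs preceding the corollary) to convert orthogonality to the substitute kernel into orthogonality to $\mH^q(M_T)$ and to show the residual $w$ in the Theorem \ref{thm:main} decomposition vanishes.
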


\begin{proof}
    Let us consider the operator $d+d^*_T$ acting on $\Lambda^\bullet T^*M_T$. Since the only real root of $d+d^*_T$ and this is a root of order $1$, we saw in \S\ref{subsection:construction} that in this case Theorem \ref{thm:main} holds with $\beta = 1$. Moreover, for $T$ large enough the $L^2$-projection of $\mathscr{H}_T$ on the space of harmonic forms $\mathscr{H}$ is an isomorphism, and therefore $\mathscr{H}_T$ is a linear complement of the image of $d+d^*_T$ in $W^{l,p}$. Thus by Theorem \ref{thm:main} any differential form $\eta$ of regularity $W^{l,p}$ which is orthogonal to the space of harmonic forms can be written as 
    \begin{equation*}
        \eta = (d+d^*_T) \nu
    \end{equation*}
    for a unique differential form $\nu$ of regularity $W^{l+1,p}$ orthogonal to the space of approximate harmonic forms $\mathscr{H}_T$, which satisfies bounds of the form
    \begin{equation*}
        \| \nu \|_{W^{l+1,p}} \leq C (\| \eta \|_{W^{l,p}} + T \| \eta \|_{L^p}), ~~~ \| \nu \|_{L^p} \leq \|\nu \|_{W^{1,p}} \leq C^{\prime} T \| \eta \|_{L^p} 
    \end{equation*}
    where in the second equality we use the case $l=0$ of the theorem. By the previous remarks, the unique differential form $\nu^\prime$ orthogonal to $\mathscr{H}(M_T)$ satisfying $(d+d^*_T) \nu^\prime = \eta$ satisfies bounds of the form $\|\nu - \nu^\prime \|_{W^{k,p}} \leq C_k e^{- \delta T} \| \nu \|_{W^{k,p}}$ for any $k \geq 0$ and small enough $\delta > 0$, and thus for large enough $T$ we also have
    \begin{equation*}
        \| \nu^\prime \|_{W^{l+1,p}} \leq C (\| \eta \|_{W^{l,p}} + T \| \eta \|_{L^p}), ~~~ \| \nu^\prime \|_{L^p} \leq \|\nu^\prime \|_{W^{1,p}} \leq C^{\prime} T \| \eta \|_{L^p} 
    \end{equation*}
    for some constants $C,C^\prime$, possibly different from the previous ones but independent of $T$. Iterating this argument, the unique differential form $\eta^\prime$ orthogonal to $\mathscr{H}(M_T)$ such that $(d+d^*_T) \eta^\prime = \nu^\prime$, which is a solution of $\Delta_T \eta^\prime = \eta$, satisfies the bounds
    \begin{equation*}
        \| \eta^\prime \|_{W^{l+2,p}} \leq C ( \| \nu^\prime \|_{W^{l+1,p}} + T \| \nu^\prime \|_{L^p}) \leq C^\prime \| \eta \|_{W^{l,p}} + C^{\prime\prime} T^2 \| \eta \|_{L^p} 
    \end{equation*}
    for some constants $C,C^\prime,C^{\prime\prime}$ which do not depend on $\eta$ and $T$ large enough.
\end{proof}

Consequently, if $b^{q-1}(X) + b^q(X) > 0$ the lowest eigenvalue of $\Delta_T$ acting on $q$-forms satisfies a bound of the form $\lambda_1(T) \geq \frac{C}{T^2}$ as $T \rightarrow \infty$. In the next part we study the distribution of the eigenvalues that have the fastest decay rate, that is of order $T^{-2}$.

    \subsection{Density of low eigenvalues}     \label{subsection:spectrum}

\paragraph{} We want bounds on the densities $\Lambda_{q,\inf}(s)$, $\Lambda_{q,\sup}(s)$ of low eigenvalues of the Laplacian $\Delta_T$ acting on $q$-forms defined in \S\ref{subsection:results}. When $b^{q-1}(X)+b^q(X) = 0$, the Laplacian acting on $q$-forms does not admit any real root, and thus it has no decaying eigenvalues. From now on we assume that $b^{q-1}(X)+b^q(X) > 0$. We shall prove Theorem \ref{thm:density} using a min-max principle.

The easiest part, which does not require the results of Section \ref{section:construction}, is to find a lower bound for $\Lambda_{q,\inf}(s)$. Let us denote by $0 < \lambda_1(T) \leq \ldots \leq \lambda_n(T) \leq \dots$ the non-decreasing sequence of nonzero eigenvalues of the Laplacian, counted with multiplicity. The $n$-th eigenvalue (counted with multiplicity) is determined by:
\begin{equation*}
    \lambda_n(T) = \min \left\{ \max \left\{ \frac{\|\Delta_T \eta\|_{L^2}}{\|\eta\|_{L^2}}, \eta \in V \backslash \{0\} \right\}, ~ V \subset W^{2,2}(\Lambda^qT^*M_T), ~ \dim V = n \right\}
\end{equation*}
where $V$ ranges over spaces orthogonal to harmonic forms. Using this we claim:

\begin{lem}
    Let $V \subset C^2([-1,1],\C)$ be an $n$-dimensional space of functions such that $f(-1)=f(1)=f^\prime(-1)=f^\prime(1)= 0$ for all $f \in V$. Let $\lambda > 0$ such that for all non-zero $f \in V$ we have:
    \begin{equation*}
        \int_{-1}^1 |f^{\prime\prime}(t)|^2 dt < \lambda^2 \int_{-1}^1 |f(t)|^2 dt
    \end{equation*}
    Then for $T$ large enough $\lambda_{(b^{q-1}(X)+b^q(X)) n - b^q(M_T)}(T) \leq \frac{\lambda}{T^2}$.
\end{lem}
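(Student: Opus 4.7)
The plan is to apply the min-max principle stated just before the lemma by exhibiting an explicit test subspace of $W^{2,2}(\Lambda^q T^*M_T)$. Denote by $\mH_Y$ the space of translation-invariant harmonic $q$-forms on $Y = \R \times X$; by Example \ref{ex:pairing} its elements are of the form $\alpha + dt \wedge \beta$ with $\alpha \in \mH^q(X)$ and $\beta \in \mH^{q-1}(X)$, hence $\dim \mH_Y = b^{q-1}(X) + b^q(X)$. For $f \in V$ and $\eta_0 \in \mH_Y$, I would set
$$\eta_{f, \eta_0}(t,x) = f(t/T)\, \eta_0(x)$$
on the neck $\{|\rho_T| \leq T\} \subset M_T$, identified with $[-T,T] \times X$, and extend by zero outside. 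The conditions $f(\pm 1) = f'(\pm 1) = 0$ ensure that the extension is of class $C^1$ and lies in $W^{2,2}(\Lambda^q T^*M_T)$. By separation of variables, the linear span $W$ of all such $\eta_{f, \eta_0}$ has dimension $(b^{q-1}(X) + b^q(X))\, n$.

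The key Rayleigh quotient computation is performed on the cylinder with the cylindrical metric $g_Y$. From $\Delta_Y = -\partial_t^2 + \Delta_X$ and $\Delta_Y \eta_0 = 0$ we obtain $\Delta_Y(f(t/T)\, \eta_0) = -T^{-2} f''(t/T)\, \eta_0$. Writing any $\omega \in W$ as $\omega(t,x) = \sum_i \phi_i(t/T)\, \eta_0^i(x)$ with $\{\eta_0^i\}$ an $L^2(X)$-orthonormal basis of $\mH_Y$ and $\phi_i \in V$, the change of variables $s = t/T$ gives
$$\frac{\|\Delta_Y \omega\|_{L^2(Y)}^2}{\|\omega\|_{L^2(Y)}^2} = \frac{1}{T^4} \cdot \frac{\sum_i \int_{-1}^1 |\phi_i''(s)|^2\, ds}{\sum_i \int_{-1}^1 |\phi_i(s)|^2\, ds} < \frac{\lambda^2}{T^4}$$
by the hypothesis. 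Since $V$ is finite-dimensional, the strict inequality is uniform: there exists $\epsilon > 0$ such that $\int |f''|^2 \leq (\lambda^2 - \epsilon) \int |f|^2$ for every non-zero $f \in V$.

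Next I would transfer the estimate to the actual metric $g_T$. By construction $g_T = g_Y$ on the central region $\{|\rho_T| \leq 1/2\}$, while $|g_T - g_Y|_{g_Y} = O(e^{-\mu(T+1-|\rho_T|)})$ on the remainder of the neck, together with corresponding bounds on derivatives up to any fixed order. Combining this with the pointwise control $|\omega|_{C^2} = O(\|f\|_{C^2}/T^2)$, one checks that both $\|(\Delta_T - \Delta_Y)\omega\|_{L^2}^2$ and $\bigl|\|\omega\|_{L^2(g_T)}^2 - \|\omega\|_{L^2(g_Y)}^2\bigr|$ are negligible relative to $\|\Delta_Y \omega\|_{L^2}^2$ and $\|\omega\|_{L^2(g_Y)}^2$ respectively, uniformly for $\omega$ in the finite-dimensional space $W$. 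Combined with the $\epsilon$-gap, this yields $\|\Delta_T \omega\|_{L^2}/\|\omega\|_{L^2} \leq \lambda/T^2$ for all non-zero $\omega \in W$ once $T$ is large enough.

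Finally, let $H$ denote the $L^2$-orthogonal projection onto $\mH^q(M_T)$ and set $W_0 = W \cap \ker H$. By rank-nullity, $\dim W_0 \geq \dim W - \dim \mH^q(M_T) = (b^{q-1}(X) + b^q(X))\, n - b^q(M_T)$, and $W_0$ inherits the bound on the Rayleigh quotient from $W$ while being orthogonal to $\mH^q(M_T)$; the min-max principle then gives the claimed inequality. The main obstacle is the transfer in the third paragraph, which critically relies on the strict inequality in the hypothesis to provide the $\epsilon$-gap absorbing the corrections coming from the deviation of $g_T$ from the cylindrical model.
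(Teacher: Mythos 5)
Your overall strategy is the same as the paper's: both use the min-max principle with test forms of separated-variable type $f(\cdot)\,\eta_0$ supported on the neck, compute the Rayleigh quotient in the cylindrical model, transfer to $(M_T,g_T)$, and then drop the dimension by $b^q(M_T)$ when intersecting with $\mathscr H^q(M_T)^\perp$. The dimension count, the uniformity of the strict spectral gap on the finite-dimensional $V$, and the final projection argument are all correct.

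However, there is a genuine gap in your transfer step, and the culprit is the claim "$|\omega|_{C^2} = O(\|f\|_{C^2}/T^2)$". This is false: for $\omega = f(t/T)\eta_0$ the factor $T^{-2}$ only appears on the pure $\partial_t^2$-derivative. The tangential derivatives $\nabla_X\omega$, $\nabla_X^2\omega$ and the mixed derivatives come with weights $\|f\|_{C^0}, T^{-1}\|f\|_{C^1}$, not $T^{-2}\|f\|_{C^2}$. If you feed this erroneous pointwise bound into the naive estimate $\|(\Delta_T-\Delta_Y)\omega\|_{L^2}^2 \lesssim \sup_{\text{neck}}|g_T-g_Y|^2 \cdot |\omega|_{C^2}^2 \cdot \mathrm{Vol}(\text{neck})$, you do \emph{not} obtain something negligible relative to $\|\Delta_Y\omega\|_{L^2}^2 \sim T^{-3}$, because $\sup_{\text{neck}}|g_T-g_Y|$ is $O(e^{-\mu})$ near $|\rho_T| = T$ — an $O(1)$ quantity, not something going to zero. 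So as written, the third paragraph does not establish what you need.

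The argument \emph{can} be salvaged, but it requires a sharper $L^2$ estimate which you do not spell out: the error $(\Delta_T-\Delta_Y)\omega$ is concentrated near $|t|=T$ with weight $e^{-2\mu(T+1-|t|)}$, and there the boundary conditions $f(\pm 1)=f'(\pm 1)=0$ force $|f(1-u/T)|^2 = O(u^4/T^4)$, $|f'(1-u/T)|^2 = O(u^2/T^2)$, so that after integrating against the exponential weight one gets $\|(\Delta_T-\Delta_Y)\omega\|_{L^2}^2 = O(T^{-4})$, which is $O(T^{-1})$ relative to $\|\Delta_Y\omega\|_{L^2}^2 \sim T^{-3}$. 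That cancellation between the non-uniform metric error and the endpoint vanishing of $f$ is exactly what does the work. The paper sidesteps this altogether by rescaling the test functions to be supported in $[-(1-\tau)T,(1-\tau)T]\times X$ for a small fixed $\tau>0$: on that shrunk support the deviation of $g_T$ from $g_Y$ is \emph{uniformly} $O(e^{-\delta\tau T})$, so the transfer is trivial, and the price — a factor $(1-\tau)^{-2}$ in the Rayleigh quotient — is absorbed by the strict inequality in the hypothesis. You should either import the paper's $\tau$-trick or replace the false $C^2$-bound with the careful endpoint-vanishing $L^2$ estimate above.
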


\begin{proof}
    Any $f \in V$ can be extended as a $C^1$ function to $\R$ by setting $f(t) = 0$ for any $|t| \geq 1$. With this extension, $f \in W^{2,2}(\R)$ and $f^{\prime\prime} \in L^2(\R)$ vanishes outside of $[-1,1]$ and is equal to the usual second derivative inside this interval. Let us choose $0 < \tau < 1$ small enough so that:
    \begin{equation}        \label{eq:ineqltau}
        \int_{-1}^1 |f^{\prime\prime}(t)|^2 dt < \lambda^2(1-\tau)^4 \int_{-1}^1 |f(t)|^2 dt
    \end{equation}
    for any $f \in V$. For $T \geq 1$, let $V_T$ be the subspace of $W^{2,2}(\Lambda^q T_\C^*Y)$ spanned by sections of the form:
    \begin{equation*}
        \eta(t,x) = f\left( \frac{t}{(1-\tau)T} \right) \nu(x)
    \end{equation*}
    where $f \in V$ and $\nu$ is a translation-invariant harmonic form on $Y$. In particular, it has dimension $\dim V_T = (b^{q-1}(X)+b^q(X)) n$ over $\C$. As the elements of $V_T$ vanish outside of the finite cylinder $[-(t-\tau)T,(1-\tau)T] \times X$, it can be identified with a subspace of $W^{2,2}(\Lambda^qT_\C^*M_T)$. On the support of the elements of $V_T$, the Laplacian $\Delta_T$ and the metric $g_T$ approach $\Delta_0 = \Delta_X -\del_t^2$ and $g_0 = g_X + dt^2$ up to terms in $O(e^{-\delta \tau T})$ and similarly for all derivatives, for some $\delta > 0$ appropriately small. Therefore, there exist constants $C,C^\prime>0$ such that:
    \begin{equation*}
        \sup_{\eta \in V_T \backslash \{0\}} \frac{\|\Delta_T \eta \|_{L^2}}{\| \eta \|_{L^2}} \leq \frac{(1+ C e^{- \delta \tau T})}{(1-\tau)^2T^2} \sup_{f \in V \backslash \{0\}} \frac{\|f''\|_{L^2}}{\|f\|_{L^2}} + C^\prime e^{-\delta \tau T} \sup_{\eta \in V_T \backslash \{0\}} \frac{\|\eta\|_{W^{2,2}}}{\|\eta\|_{L^2}}.
    \end{equation*}
    As the ratio between the $W^{2,2}$-norm and the $L^2$-norm on $V_T$ does not grow more than polynomially with $T$, the second term in the right-hand-side has exponential decay. On the other hand, by \eqref{eq:ineqltau} the first term is less than $\frac{(\lambda-\epsilon)}{T^2}$ for large enough $T$ and small enough $\epsilon > 0$. This proves the lemma.
\end{proof}

We can apply the above lemma to the spaces:
\begin{equation}
    V_n = \left\{ \sum_{1 \leq |k| \leq n} a_{k} e^{ik\pi t}, ~ \sum_{1 \leq |k| \leq n} (-1)^k a_k = \sum_{1 \leq |k| \leq n} (-1)^k k a_k = 0 \right\}
\end{equation}
For $n \geq 2$, the space $V_n$ has dimension $2n-2$ and for any non-zero $f \in V_n$ we have:
\begin{equation*}
    \int_{-1}^1 |f^{\prime\prime}(t)|^2 dt < (n\pi)^2 \int_{-1}^1 |f(t)|^2 dt
\end{equation*}
The above lemma yields the inequality:
\begin{equation}        \label{infbounddens}
    \Lambda_{q,\inf}(s) \geq (2  \lfloor \sqrt s \rfloor - 2) (b^{q-1}(X)+b^q(X)) - b^q(M_T) , ~~ \forall s \geq 1. 
\end{equation}

\paragraph{} We now want an upper bound on $\Lambda_{q,\sup}(s)$. Let us denote by
\begin{equation*}
    G_T : L^2(\Lambda^qT^*M_T) \cap \mH^q(M_T)^\perp \rightarrow L^2(\Lambda^qT^*M_T) \cap \mH^q(M_T)^\perp
\end{equation*}
the composition of the inverse of the Laplacian acting on $W^{2,2}(\Lambda^qT^*M_T) \cap \mH^q(M_T)^\perp$ with the compact embedding $W^{2,2} \hookrightarrow L^2$. The eigenvalues $\lambda_{n+1}(T)$ can be characterised by:
\begin{equation*}
    \lambda^{-1}_{n+1}(T) = \min \left\{ \max \left\{ \frac{\|G_T \eta\|_{L^2}}{\|\eta\|_{L^2}}, \eta \in V \backslash \{0\} \right\},  V \subset L^2(\Lambda^qT^*M_T), \codim V = n \right\}
\end{equation*}
where moreover $V$ ranges over closed subspaces orthogonal to $\mH^q(M_T)$. Since we developed an explicit construction of solutions to the equation $\Delta_T \nu = \eta$, the idea is to show that if we impose enough orthogonality conditions to $\eta \in L^2(\Lambda^q T^*M_T)$, and not only orthogonality to the space of harmonic forms (or to the substitute kernel), we can give explicit bounds for the norm of $G_T \eta$.

Let us denote by $N$ the sum of the dimensions of the spaces of harmonic forms with at most polynomial growth on $Z_1$ and $Z_2$. Moreover, denote by $E \subset L^2([-1,1],\C)$ the closed subspace of functions $f(t) = \sum a_k e^{ik\pi t}$ which satisfy:
\begin{equation*}
    a_0 = 0, ~~~ \sum_{|k| \geq 1} (-1)^k \frac{a_k}{k} = \sum_{|k| \geq 1} (-1)^k \frac{a_k}{k^2} = 0 .
\end{equation*}
Thus $E$ has codimension $3$ in $L^2([-1,1],\C)$. For $f \in L^2(\R,\C)$ with compact essential support let us define:
\begin{equation*}
    Hf(t) = \int_{-\infty}^t (\tau-t) f(\tau) d\tau.
\end{equation*}
The first two conditions in the definition of $E$ imply that for any $f \in E$ on has:
\begin{equation}        \label{eq:vanishint}
    \int_{-1}^1 f(\tau) d\tau = \int_{-1}^1 \tau f(\tau) d\tau = 0 .
\end{equation}
On the other hand, the last condition is a matter of scaling under change of variables. Since we have
\begin{equation*}
    \int_{-T}^t (\tau-t) e^{\frac{ik\pi \tau}{T}} dt = \frac{T^2}{(k\pi)^2} e^{\frac{ik\pi t}{T}} + (-1)^k \left( \frac{T(T+t)}{ik\pi} - \frac{T^2}{(k\pi)^2} \right)
\end{equation*}
if follows that for any $f(t) = \sum a_k e^{ik\pi t} \in E$, the function $f_T(t) = f\left( \frac{t}{T} \right)$ satisfies:
\begin{equation}        \label{eq:hft}
    Hf_T(t) = \frac{T^2}{\pi^2} \sum_{|k| \geq 1} \frac{a_k}{k^2} e^{\frac{ik\pi t}{T}}
\end{equation}
for any $-T \leq t \leq T$. 

Bearing this in mind, we shall find an upper bound on $\Lambda_{q,\sup}(s)$ with the help of the following technical lemma:

\begin{lem}
    Let $V \subset E$ be a closed subspace of codimension $n$, and let $\lambda, \epsilon > 0$ such that for all $f \in V$ we have:
    \begin{equation*}
        \int_{-1}^1 |Hf(t)|^2 dt \leq \frac{1}{(\lambda+\epsilon)^2} \int_{-1}^{1} |f(t)|^2 dt .
    \end{equation*}
    Then for $T$ large enough $\lambda_{(b^{q-1}(X)+b^q(X))(n+3)+N}(T) \geq \frac{\lambda}{T^2}$. 
\end{lem}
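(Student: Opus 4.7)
The plan is to invoke the min-max principle for the Green's operator $G_T$: writing $m = (b^{q-1}(X)+b^q(X))(n+3)+N$, it suffices to exhibit a closed subspace $W \subset L^2(\Lambda^q T^*M_T) \cap \mH^q(M_T)^\perp$ of codimension at most $m-1$ such that $\|G_T\eta\|_{L^2} \leq \frac{T^2}{\lambda}\|\eta\|_{L^2}$ for every $\eta \in W$. Since $G_T\eta$ is the minimum-$L^2$-norm solution of $\Delta_T\mu=\eta$, it is equivalent to constructing, for each such $\eta$, some $\mu \in W^{2,2}$ with $\Delta_T\mu=\eta$ and $\|\mu\|_{L^2}$ bounded by the target.

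First I would define $W$ as follows. Identify the neck $\{|\rho_T|\leq T\}$ with $[-T,T]\times X$, fix an $L^2(X)$-orthonormal basis $\psi_1, \ldots, \psi_r$ of $\mH^q_Y := \mH^q(X) \oplus dt\wedge\mH^{q-1}(X)$ where $r = b^{q-1}(X)+b^q(X)$, and for $\eta \in L^2$ set $\tilde f^\eta_i(s) := \langle \eta(Ts,\cdot),\psi_i\rangle_{L^2(X)}$. The subspace $W$ consists of $\eta \in L^2 \cap \mH^q(M_T)^\perp$ satisfying: (i) $\tilde f^\eta_i \in V$ for every $i$ (codimension at most $r(n+3)$), and (ii) $\eta$ is $L^2$-orthogonal to a family of $N$ cutoffs of polynomial-growth harmonic $q$-forms on $Z_1 \sqcup Z_2$. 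A count based on the Mayer--Vietoris sequence together with the isomorphism $\mH^q_T \simeq H^q(M_T)$ from \S\ref{subsection:approxharm} shows that at least one of these linear conditions is implied by $\eta \perp \mH^q(M_T)$, so the codimension of $W$ in $\mH^q(M_T)^\perp$ is at most $m-1$.

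Next, for $\eta \in W$, I would construct $\mu$ as in Section \ref{section:construction}. The indicial operator of $\Delta_T$ is $\Delta_0 = \Delta_X - \partial_t^2$, whose unique real root $\lambda_0 = 0$ has order $2$ with residue $R^\Delta_{-2}(0) = p_h$, the projection onto $\mH^q_Y$ (Example \ref{ex:resolvent}). Writing $\eta_0 = \zeta_1\eta$ as a compactly supported section on the cylinder, formula \eqref{eq:defus} gives $u_0 = Q_0 \eta_0 = u_r + u_s$ with
\begin{equation*}
    u_s(t,x) = H(\pi_h\eta_0)(t,x) = \sum_{i=1}^r (Hf^\eta_i)(t)\,\psi_i(x),
\end{equation*}
where $f^\eta_i(t) = \tilde f^\eta_i(t/T)$. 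The first two moment conditions defining $E$ force $Hf^\eta_i$ to be supported in $[-T,T]$, so $u_s$ extends by zero to $M_T$. Applying the hypothesis mode by mode and using the scaling $Hf^\eta_i(t) = T^2(H\tilde f^\eta_i)(t/T)$ yields the key estimate
\begin{equation*}
    \|u_s\|_{L^2} \leq \frac{T^2}{\lambda+\epsilon}\,\|\pi_h\eta_0\|_{L^2} \leq \frac{T^2}{\lambda+\epsilon}\,\|\eta\|_{L^2},
\end{equation*}
while $\|u_r\|_{L^2} = O(\|\eta\|_{L^2})$ uniformly in $T$ as in Proposition \ref{prop:dzero}.

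Finally, the residual $f_1+f_2 = \eta - \Delta_T(\zeta_0 u_0)$ must be solved on the EAC ends. The generic $O(T^d)=O(T^2)$ bound of Lemma \ref{lem:boundfi} is sharpened to $O(\|\eta\|_{L^2})$ here because the vanishing of $Hf^\eta_i$ and $(Hf^\eta_i)'$ at $t=\pm T$ (forced by the first two moment conditions) implies $\|u_s\|_{W^{1,2}}$ is uniformly $O(\|\eta\|_{L^2})$ on any fixed-length neighborhood of $\{|\rho_T|=T\}$, controlling the commutator $[\Delta_T,\zeta_0]u_0$. Condition (ii) cancels, up to exponentially small errors, the obstructions $\mK_i^*$, so Proposition \ref{prop:lmeps} yields correctors $u_i$ on $Z_i$ with $\|u_i\|_{L^2} = O(\|\eta\|_{L^2})$. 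Patching $\mu = \zeta_0 u_0 + \sum_i \chi_{T+1}(\rho_i) u_i$ as in Proposition \ref{prop:approxsolv} and iterating to absorb the exponentially small errors produces $\Delta_T\mu=\eta$ with $\|\mu\|_{L^2} \leq \frac{T^2}{\lambda+\epsilon}\|\eta\|_{L^2} + O(\|\eta\|_{L^2})$, which is at most $\frac{T^2}{\lambda}\|\eta\|_{L^2}$ for $T$ large enough. The main obstacle is this sharpening of the residual bound to $O(\|\eta\|_{L^2})$: it requires all three moment conditions defining $E$, the third one (equivalent to $\int_{-1}^1 t^2 \tilde f = 0$ given the first two) ensuring via Lemma \ref{lem:pairing} that $u_s$ does not couple to the substitute cokernel and thereby introduce a stray $T^2$-scale correction via the $u_i$.
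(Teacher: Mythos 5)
Your overall strategy — min--max via the Green's operator $G_T$, an explicit right inverse on the neck, weighted analysis on the two EAC ends, then iteration — is the same as the paper's. The crucial difference is that you take the cutoff parameter $\tau=0$ and try to compensate by sharpening the residual estimate, whereas the paper introduces the auxiliary cutoffs $\zeta_\tau$, $\zeta_{\tau+1}$ and runs a two-scale argument (first $\tau$ large, then $T$ large). This discrepancy hides a genuine gap.

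Your observation that $[\Delta_T,\zeta_0]u_s$ vanishes is correct: the two moment conditions force $Hf$ to vanish identically outside $[-T,T]$ and be $C^1$ there, so $u_s$ and $\nabla u_s$ vanish on the support of $\nabla\zeta_0$. A similar pointwise estimate ($|u_s(\rho_T)| \lesssim (T+1-|\rho_T|)^{3/2}\|\eta\|_{L^2}$ combined with the $e^{-\mu\rho_i}$ decay of the coefficients) also handles the term $\zeta_0(\Delta_T-\Delta_0)u_s$ that you do not mention. So the residual can indeed be made $O(\|\eta\|_{L^2})$ in $L^2_{\delta'}$. But this is not where the $\tau$-parameter is really earning its keep. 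The dominant error that it controls is the comparison between the metric $g_T$ on $M_T$ and the product metric $g_Y$ on the cylinder. On $\{\,|\rho_T|\le T\,\}$ one has $g_T-g_Y = O(e^{-\mu(T+1-|\rho_T|)})$, which is $O(e^{-\mu})$ — a fixed constant, not $o(1)$ — near the ends of the neck. Since $\eta$ is unconstrained near $|\rho_T|\approx T$, the step $\|p_h\eta_0\|_{L^2(g_Y)} \le \|\eta\|_{L^2(g_T)}$ on which your key estimate rests actually produces $\|p_h\eta_0\|_{L^2(g_Y)} \le (1+Ce^{-\mu})^{1/2}\|\eta\|_{L^2(g_T)}$, and the leading constant in your final bound becomes $\frac{(1+Ce^{-\mu})^{1/2}}{\lambda+\epsilon}\,T^2$ rather than $\frac{T^2}{\lambda+\epsilon}+O(1)$. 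Since $e^{-\mu}$ is determined by the building blocks while $\epsilon/\lambda$ is given (and in the application $\lambda\sim n^2$ with $\epsilon$ fixed), nothing guarantees $(1+Ce^{-\mu})^{1/2}\le\frac{\lambda+\epsilon}{\lambda}$, so you cannot close the gap between $\lambda+\epsilon$ and $\lambda$. This is exactly what the paper's $\zeta_\tau$ cutoffs achieve: $\eta_\tau$ is supported where $g_T-g_Y=O(e^{-\mu\tau})$, turning the spurious constant into $1+Ce^{-\mu\tau}$, and taking $\tau$ large first makes it as close to $1$ as needed before $T\to\infty$.

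A smaller but worth flagging issue: your claim that the third condition defining $E$ ``ensures via Lemma \ref{lem:pairing} that $u_s$ does not couple to the substitute cokernel'' is not right. That decoupling is precisely the vanishing of the polyhomogeneous limit $u_{f_0}$, which follows from the \emph{first two} conditions (the two moments in \eqref{eq:vanishint}); the third condition $\sum(-1)^k a_k/k^2=0$ is, as the paper says, a scaling normalisation giving the clean Fourier formula \eqref{eq:hft} and the $n^{-4}$ decay of $\|Hf\|^2/\|f\|^2$ for $V'_n$ — with only two conditions the constant offset in $Hf$ would degrade that ratio, not the pairing argument.
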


\begin{proof}
    The idea is to follow the construction of \S\ref{subsection:construction} to build a solution of $\Delta_T \nu = \eta$, where $\eta$ is a complex $q$-form orthogonal to the space of harmonic forms, and showing that if we assume sufficiently many orthogonality conditions we can give a precise bound on the constant $C$ such that $\|\nu\|_{L^2} \leq CT^2 \|\eta\|_{L^2}$. To do this we need to introduce a parameter $\tau > 0$ and replace the cutoffs $\zeta_0$ and $\zeta_1$ (see \S\ref{subsection:characteristic}) by $\zeta_\tau$ and $\zeta_{\tau+1}$. 

    Let us define $\eta_\tau = \zeta_{\tau+1}\eta$, considered as a $q$-form on the cylinder $Y=\R \times X$ supported in the cylinder $[-T+\tau, T-\tau] \times X$. We pick a basis $\eta_1,\ldots,\eta_m$ of the space of translation-invariant harmonic $q$-forms on $Y$, orthonormal for the $L^2$-product on $X$. Then we may write:
    \begin{equation*}
        \eta_\tau(t,x) = \widetilde \eta_\tau(t,x) + \sum_{j=1}^m f_{\tau,j}(t) \eta_j(x)
    \end{equation*}
    with $\widetilde \eta_\tau$ orthogonal to any function of the form $f(t)\eta_j(x)$ with $f$ compactly supported smooth function, and $f_{\tau,j} \in L^2([-T+\tau,T-\tau],\C)$. Moreover the solution $\nu_\tau = Q\eta_\tau$ of $\Delta_0 \nu = \eta_\tau$ provided by Theorem \ref{thm:localsol} can be written as (see Example \ref{ex:fvproduct}):
    \begin{equation*}
        \nu_\tau = Q_r[\eta_\tau] + \sum_{j=1}^m Hf_{j,\tau} \cdot \eta_j .
    \end{equation*}
    with $Q_r$ defined as in \S\ref{subsection:localex}. Let us assume that each of the functions 
    \begin{equation*}
        t \in [-1,1] \mapsto f_{j,\tau}((T-\tau)t)
    \end{equation*}
    belongs to $V \subset E$. This imposes $(b^{q-1}(X)+b^q(X))(n+3)$ orthogonality conditions on $\eta$. Given \eqref{eq:vanishint} the $L^2$-functions $Hf_{j,\tau}$ vanish outside of $[-T+\tau,T-\tau]$, and therefore $\nu_\tau \in L^2(\Lambda^qT_\C^*Y)$ and from \eqref{eq:hft} it satisfies:
    \begin{equation}        \label{eq:boundetatau}
        \|\nu_\tau\|_{L^2} \leq C \|\eta_\tau\|_{L^2} + \frac{(T-\tau)^2}{\lambda+\epsilon} \|\eta_\tau\|_{L^2} \leq \frac{T^2}{\lambda+\epsilon} \|\eta_\tau\|_{L^2}
    \end{equation}
    for large enough $T$. Let us consider $\zeta_\tau$ as a section of $\Lambda^qT_C^*M_T$ supported in the neck region. As such, there exists a constant $C > 0$, which does not depend on $\tau$, such that:
    \begin{equation*}
        \|\zeta_\tau \nu_\tau \|_{L^2} \leq \frac{1+ C e^{-\delta \tau}}{\lambda + \epsilon}T^2 \|\eta\|_{L^2}
    \end{equation*}
    Following the method of \S\ref{subsection:characteristic}, we can write:
    \begin{equation*}
        \eta - \Delta_T (\zeta_\tau \nu \tau) = \eta_1 + \eta_2
    \end{equation*}
    with $\eta_i \in L^2_{\delta^\prime}(\Lambda^q T_C^*Z_i)$, for some small $\delta^\prime > 0$ that we fix. Moreover, as in the proof of Lemma \ref{lem:boundfi} to show the bounds:
    \begin{equation}        \label{eq:boundetai}
        \|\eta_i\|_{L^2_{\delta'}} \leq C e^{\delta^\prime \tau} \| \eta\|_{L^2} + C^\prime e^{-\delta \tau } \| \eta\|_{L^2} \leq C^{\prime\prime} T^2 e^{-\delta \tau} \|\eta\|_{L^2}
    \end{equation}
    for $T$ large enough, where $\delta$, $\delta^\prime$ and $\tau$ are fixed, and $C^{\prime\prime}$ does not depend on any of these choices. Up to $O(e^{-\delta T})$ terms, the vanishing of the obstructions to solving $f_i = \Delta_i \nu_i$ with $\nu_i \in W^{2,2}_{\delta^\prime}$ can be expressed as the vanishing of $N$ linear forms (this is to say that the coefficients of the characteristic system are linear in $\eta \in L^2$). Thus imposing $N$ additional orthogonality conditions on $\eta$, we can use the same argument as in Proposition \ref{prop:approxsolv} to show that there exists $\nu^\prime \in W^{2,2}$, $\eta^\prime \in L^2$ such that $\eta-\Delta_T \nu^\prime = \eta^\prime$ with $\|\eta^\prime\|_{L^2} \leq C  T^2 e^{-\delta^\prime T} \|\eta\|_{L^2}$ for some constant $C^\prime$ possibly depending on $\delta^\prime$ but not on $\tau$ or $T$. From \eqref{eq:boundetatau} and \eqref{eq:boundetai} we can moreover deduce a bound:
    \begin{equation*}
        \| \nu^\prime\|_{L^2} \leq \left( \frac{1+ C e^{-\delta \tau}}{\lambda + \epsilon} + C^\prime e^{-\delta \tau} \right) T^2 \| \eta \|_{L^2} 
    \end{equation*}
    for large enough $T$. On the other hand, as $\eta$ is by assumption orthogonal to the space of harmonic forms, so is $\eta^\prime$ and by Corollary \ref{cor:laplacian} there exists $\nu^{\prime\prime}$ such that $\Delta_T \nu^{\prime\prime} = \eta^\prime$ with a bound:
    \begin{equation*}
        \|\nu^{\prime\prime}\|_{L^2} \leq C T^2 \| \eta^\prime\|_{L^2} \leq C^{\prime\prime} T^4 e^{-\delta^\prime T} \|\eta\|_{L^2} 
    \end{equation*}
    for some constant $C^{\prime\prime}$ which does not depend on $\tau$ or on $T$ large enough. Thus if $\nu = \nu^\prime + \nu^{\prime\prime}$ we have $\Delta_T \nu = \eta$ with a universal bound:
    \begin{equation*}
        \|\nu\|_{L^2} \leq \left( \frac{1+ C e^{-\delta \tau}}{\lambda + \epsilon} + C^\prime e^{-\delta \tau} + C^{\prime\prime} T^2 e^{-\delta'T}\right) T^2
    \end{equation*}
    for some constants $C,C^\prime,C^{\prime\prime}$ that may depend on the choices of $\delta,\delta'$ but not on $\tau$ or large enough $T$. As $\|G_T \eta \|_{L^2} \leq \| \nu \|_{L^2}$ it follows that if $\tau$ and $T$ are large enough we have:
    \begin{equation*}
        \|G_T \eta \|_{L^2} \leq \frac{T^2}{\lambda} \| \eta \|_{L^2} .
    \end{equation*}
    This inequality holds true provided $\eta$ satisfies all the orthogonality conditions described above, which define a closed subspace of codimension no more than $(b^{q-1}(X)+b^q(X))(n+3) + N$ in the orthogonal space to harmonic forms in $L^2(\Lambda^qT_\C^*M_T)$. The lemma follows. 
\end{proof}

To use this lemma we consider the subspaces $V^\prime_n \subset E$ defined by:
\begin{equation}
    V^\prime_n = \left\{ f(t) = \sum a_k e^{ik\pi t} \in E, ~ a_k = 0 ~ \forall |k| \leq n \right\}.
\end{equation}
The space $V^\prime_n$ has codimension $2n$ in $E$, and for any $f \in V^\prime_n$, \eqref{eq:hft} implies:
\begin{equation*}
    \int_{-1}^1 |Hf(t)|^2 dt^2 \leq \frac{1}{(n+1)^4\pi^4} \int_{-1}^{1} |f(t)|^2 dt
\end{equation*}
Hence we have an upper bound:
\begin{equation*}
    \Lambda_{q,\sup}(s) \leq 2(\lfloor \sqrt s \rfloor + 3)(b^{q-1}(X) + b^q(X)) + N .
\end{equation*}
Together with the bound on $\Lambda_{q,\inf}(s)$ this proves the first part of Theorem \ref{thm:density}.

\paragraph{} In order to prove the second assertion in Theorem \ref{thm:density}, let us consider the subset $W_n \subset V_n$ defined by:
\begin{equation}
    W_n = \left \{ \sum_{1 \leq |k| \leq n} a_{k} e^{ik\pi t} \in V_n, ~ \sum_{1 \leq |k| \leq n} (-1)^k k^2 a_k = 0 \right \}
\end{equation}
seen as a subspace of $C^3([-1,1],\C)$. Any $f \in W_n$ can be extended as a $C^2$-function on $\R$, with $f^\prime \in V_n$. Moreover if $\beta$ is a harmonic $(q-1)$-form then:
\begin{equation*}
    d(f \beta) = f^\prime dt \wedge \beta .
\end{equation*}
Using this, we can deduce that the density of low eigenvalues of the Laplacian acting on exact $q$-forms, which we define as in \ref{par:spectral}, satisfies:
\begin{equation*}
    \Lambda^e_{q,\inf}(s) \geq 2b^{q-1}(X) \sqrt s - N_q
\end{equation*}
for some constant $N_q \geq 0$. By Hodge duality, this implies the lower bound:
\begin{equation*}
    \Lambda^*_{q,\inf}(s) \geq 2b^q(X) \sqrt s - N_{\dim M_T - q}.
\end{equation*}
As we know that $\Lambda^*_{q,\sup}(s)+\Lambda^e_{q,\sup}(s) \leq 2(b^{q-1}(X) + b^q(X)) \sqrt s + O(1)$, this means that when $b^q(X) \neq 0$ we have:
\begin{equation*}
    \Lambda^*_{q, \sup}(s) = \Lambda^*_{q, \inf}(s) + O(1) = 2 b^q(X) \sqrt{s} + O(1).
\end{equation*}
If on the other hand $b^q(X) = 0$ then $\Lambda^*_{q, \sup}(s) = O(1)$, so that only finitely many eigenvalues of the Laplacian acting on co-exact $q$-forms may decay at rate $T^{-2}$, the rest of the low eigenvalues would decay at a slower rate. 

\paragraph{}    \label{par:stronger} It is natural to ask if stronger statements about the distribution of low eigenvalues could be made. In particular one could ask if the spectrum of the Laplacian splits with a lower part represented by the spectrum of the Laplacian acting on $S^1_{2T} \times X$, where the circle factor has length $2T$. Even in the simple case of the Laplacian acting on functions this does not hold. Considering the function $\rho_T$ and applying a min-max argument as above, one can easily see that for large enough $T$ the lowest non-zero eigenvalue of the scalar Laplacian satisfies $\lambda_1(T) \leq \frac{6}{T^2}$. In general, it seems that the interactions between the two building blocks tend to shift the lower spectrum of $\Delta_T$ compared with the spectrum of the Laplacian acting on $S^1_{2T} \times X$. The precise way in which this shift happens is likely to depend on the building blocks, and it may not be tractable analytically to give sharp bounds for the sequence of low eigenvalues of the Laplacian. 

To finish this section, let us discuss possible generalisations of Theorem \ref{thm:density}. If we consider more general formally self-adjoint operators $P_T$ that fit into the set-up of Theorem \ref{thm:main}, the substitute kernel of $P_T$ can be interpreted as a finite number of very low eigenvalues, with decay rate exponential in $T$. The rest of the eigenvalues have at most polynomial decay, and in fact one could easily see that this decay rate is in $T^{-d}$ (except for a finite number that may have faster polynomial decay), where $d$ is the order of the real root of the indicial operator $P_0$. If we assume for instance that $P_T$ has non-negative eigenvalues and that $P_0$ can be written as $P_0(\lambda) = D + \lambda^d$, the proof given above carries out without problem to show that the density of eigenvalues of $P_T$ contained in the interval $\left( 0, \pi^d s T^{-d} \right]$ is equivalent to $2 (\dim \ker D)  s^{\frac 1 d}$ as $s \rightarrow \infty$.

\section{Improved estimates for twisted connected sums}     \label{section:compact}

In this section, we apply our results to the study of compact manifolds with holonomy $G_2$ constructed by twisted connected sum. We review the basics of $G_2$-geometry in \S\ref{subsection:gtwo}. In \S\ref{subsection:tcs} we prove Corollary \ref{cor:tcsestimates}, using quadratic estimates which we prove in \S\ref{subsection:quadratic}.

    \subsection{{$G_2$}-manifolds}   \label{subsection:gtwo}   

\paragraph{} Let $V$ be an oriented real vector space of dimension $7$. A $3$-form $\varphi \in \Lambda^3 V^*$ is said to be \emph{positive} if for any non-zero $v \in V$ we have:
\begin{equation}
    \iota_v \varphi  \wedge \iota_v \varphi \wedge \varphi > 0
\end{equation}
relatively to the choice of orientation, where $\iota$ denotes the interior product. Let $\Lambda^3_+ V^*$ be the set of positive forms on $V$. This is an open subset of $\Lambda^3 V^*$ and the group of orientation-preserving automorphisms $GL_+(V)$ acts transitively on it. The stabiliser of any positive form is identified with the Lie group $G_2$. 

If $\Vol$ is a volume form on $V$ and $\varphi$ a positive $3$-form, we have
\begin{equation}
    \iota_u \varphi \wedge \iota_v \varphi \wedge \varphi  = 6 \langle u, v \rangle \Vol
\end{equation}
for some inner product $\langle \cdot, \cdot \rangle$, but there is an ambiguity in the scaling of $\langle \cdot, \cdot \rangle$ and $\Vol$. This can be fixed by requiring $| \varphi |^2 = 7$, and hence any positive form $\varphi$ canonically defines a volume form $\Vol_\varphi$ and an inner product $g_\varphi$ on $V$. With this choice, $\Vol_\varphi$ is also the volume form associated with $g_\varphi$. The maps $\varphi \mapsto \Vol_\varphi$ and $\varphi \mapsto g_\varphi$ are equivariant under the action of $GL_+(V)$, and in particular the stabiliser of any positive form also stabilises the associated volume form and inner product. This shows that $G_2 \subset SO(7)$. The Hodge dual $*_\varphi \varphi$ of any positive form is denoted by $\Theta(\varphi)$.

A $G_2$-structure on an oriented manifold $M$ of dimension $7$ corresponds to the choice of a $3$-form $\varphi$ such that $\varphi_x \in \Lambda^3_+ T_x^* M$ for all $x \in M$. It naturally induces a Riemannian metric $g_\varphi$, a volume form $\Vol_\varphi$ and a $4$-form $\Theta(\varphi)$ on $M$. A $G_2$-structure $\varphi$ on $M$ is called torsion-free if $\nabla_\varphi \varphi \equiv 0$ for the Levi-Civita connection $\nabla_\varphi$ of $g_\varphi$. As was first shown in \cite{fernandez1982riemannian}, this condition is equivalent to:
\begin{equation}
    d \varphi = 0 = d\Theta(\varphi).
\end{equation}
When this condition is satisfied, the holonomy of $g_\varphi$ is contained in $G_2 \subset SO(7)$. Such metrics are automatically Ricci-flat. Combined with the Cheeger--Gromoll splitting theorem \cite{cheeger1971splitting}, this implies the following:

\begin{prop}
    A compact $G_2$-manifold $(M^7,\varphi)$ has full holonomy $G_2$ if and only if $\pi_1(M)$ is finite.
\end{prop}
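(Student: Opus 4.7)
The plan is to combine the Cheeger--Gromoll splitting theorem for compact Ricci-flat manifolds with the de Rham decomposition and Berger's classification of irreducible Riemannian holonomy groups, exploiting the fact that $G_2$ acts irreducibly on $\R^7$ and fixes no non-zero vector in this representation.

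Since $(M, g_\varphi)$ is compact and Ricci-flat with $\mathrm{Hol}(M) \subseteq G_2$, the Cheeger--Gromoll splitting theorem (in its compact Ricci-flat form) provides a finite Riemannian cover $\hat M \to M$ isometric to a Riemannian product $T^k \times N$, where $T^k$ is a flat torus and $N$ is a compact, simply connected, Ricci-flat manifold. Since $G_2$ is connected, the holonomy of $M$ coincides with its restricted holonomy, and so $\mathrm{Hol}(\hat M) = \mathrm{Hol}(M) \subseteq G_2$.

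For the forward direction, I would assume $\mathrm{Hol}(M) = G_2$ and note that the splitting $\hat M = T^k \times N$ forces the holonomy to act trivially on the $k$-dimensional subspace tangent to the $T^k$-factor. Since $G_2$ has no non-zero fixed vectors in its $7$-dimensional representation, this forces $k = 0$, so $\hat M = N$ is simply connected and $\pi_1(M)$ is finite.

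Conversely, if $\pi_1(M)$ is finite, then the universal cover $\widetilde M$ is compact, simply connected, and Ricci-flat with $\mathrm{Hol}(\widetilde M) \subseteq G_2$. By the de Rham decomposition, $\widetilde M$ splits as a Riemannian product of irreducible factors, with no Euclidean factor allowed by compactness. A short dimensional count using the known list of compact simply connected irreducible Ricci-flat manifolds (K3 surfaces in dimension $4$, Calabi--Yau threefolds in dimension $6$, $G_2$-manifolds in dimension $7$, etc.) rules out any non-trivial product of total dimension $7$, so $\widetilde M$ is itself irreducible. Berger's classification of irreducible Riemannian holonomy groups then identifies $G_2$ as the only possibility compatible with the inclusion $\mathrm{Hol}(\widetilde M) \subseteq G_2$ in dimension $7$, yielding $\mathrm{Hol}(M) = G_2$. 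The main obstacle will be in cleanly invoking Berger's classification to simultaneously exclude non-trivial de Rham splittings of $\widetilde M$ and proper irreducible holonomy subgroups of $G_2$ in dimension $7$; the remainder of the argument is essentially linear-algebraic once the splitting theorems are in place.
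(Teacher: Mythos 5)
Your overall strategy — Cheeger--Gromoll splitting on one side, de Rham decomposition and Berger on the other, with the irreducibility of the $G_2$-action on $\R^7$ doing the work — is sound and matches what the paper implicitly invokes by citing Cheeger--Gromoll (the paper gives no further detail; the standard reference is \cite[Prop.~10.2.2]{joyce2000compact}). The forward direction is essentially complete: once you know $\mathrm{Hol}(\hat M) = G_2$ acts with no nonzero fixed vector, the flat factor must be trivial and the finite cover is simply connected, so $\pi_1(M)$ is finite. Two points, however, should be tightened.

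First, the inference ``Since $G_2$ is connected, the holonomy of $M$ coincides with its restricted holonomy'' is not valid as stated: a subgroup of a connected Lie group need not be connected, and in general $\mathrm{Hol}(M)/\mathrm{Hol}^0(M)$ can be any finite quotient of $\pi_1(M)$ sitting inside $G_2$ as a discrete subgroup. In the forward direction this is harmless because you \emph{assume} $\mathrm{Hol}(M) = G_2$, which is connected; in the converse you work on the simply connected cover $\widetilde M$ where $\mathrm{Hol}(\widetilde M) = \mathrm{Hol}^0(\widetilde M)$ automatically. So the slip does no damage, but the justification should say ``since $\mathrm{Hol}(M) = G_2$ is connected'' (forward) rather than appeal to connectedness of the ambient group.

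Second, the ``dimensional count'' ruling out nontrivial de Rham splittings of $\widetilde M$ is left as a sketch, and what you flag as the main obstacle can be resolved more cheaply than a case-by-case appeal to the classification of compact irreducible Ricci-flat manifolds. Since $\widetilde M$ is compact and simply connected, the de Rham decomposition has no flat factor. A compact Ricci-flat manifold of dimension $2$ or $3$ is flat, hence not an irreducible nonflat factor; so every irreducible factor has dimension at least $4$, and $7$ cannot be written as a sum of two or more integers each at least $4$. Thus $\widetilde M$ is irreducible of dimension $7$, it is not locally symmetric (Ricci-flat symmetric implies flat, impossible here), and Berger then leaves only $SO(7)$ and $G_2$; since $\mathrm{Hol}(\widetilde M) \subseteq G_2$ and $SO(7) \not\subseteq G_2$, the holonomy is $G_2$. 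Alternatively, and closer to Joyce's original argument, you could avoid Berger altogether: the proper connected subgroups of $G_2$ that can arise as holonomy of a $7$-manifold all fix a nonzero vector in $\R^7$, producing a parallel (hence harmonic) $1$-form, so $b^1(M) > 0$, and Cheeger--Gromoll identifies $b^1(M)$ with the rank of a finite-index free abelian subgroup of $\pi_1(M)$, contradicting finiteness of $\pi_1(M)$. Either route closes the gap; your de Rham/Berger version is a genuine, correct alternative, it just needs the low-dimensional flatness observation spelled out.
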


\paragraph{} \label{par:grep} For later use we describe some of the representations of $G_2$. Let $V$ be an oriented $7$-dimensional vector space, $\varphi$ be a positive form and identify the stabiliser of $\varphi$ with $G_2$. The representations $V$ and $V^*$ are irreducible, as $G_2$ acts transitively on the unit sphere in $V$. The representation $\Lambda^2 V^*$ is not irreducible but decomposes as:
\begin{equation*}
    \Lambda^2 V^* = \Lambda^2_7 \oplus \Lambda^2_{14}
\end{equation*}
where $\Lambda^2_{14}$ is isomorphic to the Lie algebra of $G_2$ and $\Lambda^2_7 \simeq V^*$ is its orthogonal complement. Similarly, the representation $\Lambda^3 V^*$ admits an irreducible decomposition of the form:
\begin{equation*}
\Lambda^3 V^* = \Lambda^3_1 \oplus \Lambda^3_7 \oplus \Lambda^3_{27}.
\end{equation*}
where the subscripts correspond to the dimension of each representation. As the Hodge star operator gives isomorphisms $\Lambda^k V^* \simeq \Lambda^{7-k} V^*$, we obtain a full decomposition of the exterior algebra of $V^*$.

On a $G_2$-manifold $(M,\varphi)$, the decomposition $\Lambda^k V^* \simeq \oplus \Lambda^k_m$ induces a decomposition of the space of $k$-forms $\Omega^k(M) \simeq \oplus \Omega^k_m$. With these notations, the torsion of $\varphi$ (seen as the obstruction for the Levi-Civita connection of $g_\varphi$ to be compatible with $\varphi$) can be identified with a $1$-form valued in the bundle $\Lambda^2_7M$ \cite{fernandez1982riemannian}, which we denote by $\tau(\varphi)$. More precisely, if $\nabla'$ is any connection compatible with $\varphi$, and we write the Levi-Civita connection $\nabla = \nabla' + A$ for some $1$-form $A \in \Omega^1(\Lambda^2T^*M)$, then we can define $\tau(\varphi)$ as the orthogonal projection of $A$ onto $T^*M \otimes \Lambda^2_7 M$ (which does not depend on the choice of $\nabla'$). In particular the connection $\widetilde \nabla$ defined by:
\begin{equation*}
    \widetilde \nabla = \nabla - \tau(\varphi)
\end{equation*}
is compatible with $\varphi$. This observation will be useful later, as for some purposes it is more convenient to work with a connection compatible with $\varphi$ rather than with the Levi-Civita connection when $\varphi$ is not torsion-free.

\paragraph{} Let $X$ be a complex threefold equipped with a Calabi--Yau structure $(\omega,\Omega)$, where $\omega \in \Omega^2(X)$ is the K\"ahler form and $\Omega \in \Omega^3_\C(X)$ is the holomorphic volume form. Then $Y = \R \times X$ is naturally equipped with the $G_2$-structure:
\begin{equation}
    \varphi = dt \wedge \omega + \re(\Omega).
\end{equation}
The associated $G_2$-metric is the product metric $g_\varphi = dt^2 + g_X$, where $g_X$ is the Calabi--Yau metric on $X$. The Hodge dual of $\varphi$ takes the form:
\begin{equation}
    \Theta(\varphi) = \frac 1 2 \omega^2 - dt \wedge \im \Omega.
\end{equation}
When $X$ is compact, $(Y,\varphi)$ is called a $G_2$-cylinder. Any translation-invariant $G_2$-structure on $Y$ is obtained in this way. 

A non-compact $G_2$-manifold $(Z,\varphi)$ is called an EAC $G_2$-manifold of rate $\mu > 0$ if $g_\varphi$ is an EAC metric of rate $\mu$ on $Z$, and there exists a translation-invariant $G_2$-structure $\varphi_0$ on the asymptotic cylinder $Y = \R \times X$ such that $\varphi-\varphi_0$ and all their derivatives are $O(e^{-\mu \rho})$ at infinity, where $\rho$ is a cylindrical coordinate function on $Z$.

    \subsection{The twisted connected sum construction}     \label{subsection:tcs}

\paragraph{} All known constructions  of compact manifolds with holonomy $G_2$ are based on an argument of Joyce \cite[\S10.3]{joyce2000compact}, which we outline here. The starting point is to consider a compact manifold $M^7$ equipped with a closed $G_2$-structure $\varphi$ with appropriately small torsion, and look for a nearby torsion-free $G_2$-structure $\widetilde \varphi = \varphi + d \eta$ in the same cohomology class. As the condition defining $G_2$-structures is open, there exists a universal constant $\epsilon_0$ such that if $\xi \in \Omega^3(M)$ satisfies $\| \xi \|_{C^0} \leq \epsilon_0$, then $\varphi + \xi$ is also a $G_2$-structure. Hence $\Theta(\varphi + \xi)$ is well-defined and can be written as:
\begin{equation*}
    \Theta(\varphi +\xi) = \Theta(\varphi) + L_\varphi \xi + F(\xi)
\end{equation*}
where $L_\varphi$ is the linearisation of $\Theta$ at $\varphi$ and $F$ a smooth function defined on a ball of radius $\epsilon_0$ in $\Lambda^3T^*M$. In particular $F$ satisfies a bound of the form:
\begin{equation*}
    |F(\xi)-F(\xi^\prime)| \leq C |\xi-\xi^\prime|(|\xi|+|\xi^\prime|), ~~ |\xi|,|\xi^\prime| \leq \epsilon_0
\end{equation*}
for some universal constant $C$ \cite[Proposition 10.3.5]{joyce2000compact}. With these notations, there exists a universal constant $\epsilon_1$, which does not depend on $M$ or $\varphi$, such that the following holds \cite[Theorem 10.3.7]{joyce2000compact}:

\begin{thm}     \label{thm:torsionfree}
    Let $(M,\varphi)$ be a compact manifold equipped with a closed $G_2$-structure. Suppose $\eta$ is a $2$-form on $M$ such that $\| d\eta \|_{C^0} \leq \epsilon_1$ and $\psi$ a $4$-form on $M$ such that $d\Theta(\varphi) = *d\psi$ and $\| \psi \|_{C^0} \leq \epsilon_1$. If $(\eta,\psi)$ satisfy:
    \begin{equation*}
        \Delta \eta + d \left( \left(1+ \frac 1 3 \langle d\eta, \varphi \rangle \right) \psi \right) + *dF(d\eta) = 0
    \end{equation*}
    then $\widetilde \varphi = \varphi + d\eta$ is a torsion-free $G_2$-structure on $M$.
\end{thm}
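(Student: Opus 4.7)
The plan is to verify the Fern\'andez--Gray characterization, showing that $\widetilde{\varphi} = \varphi + d\eta$ is both closed and co-closed, hence torsion-free. The bound $\|d\eta\|_{C^0} \leq \epsilon_1$ (which we take below $\epsilon_0$) guarantees that $\widetilde{\varphi}$ is a positive $3$-form, so that the Taylor expansion $\Theta(\widetilde{\varphi}) = \Theta(\varphi) + L_\varphi(d\eta) + F(d\eta)$ makes sense and $\widetilde{\varphi}$ determines a $G_2$-structure. Closedness is automatic: $d\widetilde{\varphi} = d\varphi + d^2\eta = 0$ since $\varphi$ is closed and $d^2 = 0$.

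The nontrivial content is $d\Theta(\widetilde{\varphi}) = 0$. Applying $d$ to the Taylor expansion and then the Hodge star gives
\[
*d\Theta(\widetilde{\varphi}) = *d\Theta(\varphi) + *dL_\varphi(d\eta) + *dF(d\eta).
\]
Using the hypothesis relating $d\Theta(\varphi)$ to $\psi$ to rewrite the first term, and then substituting for $*dF(d\eta)$ from the PDE satisfied by $(\eta,\psi)$, the terms directly involving $\psi$ cancel in pairs, reducing the vanishing of $*d\Theta(\widetilde{\varphi})$ to an algebraic identity of the form
\[
*dL_\varphi(d\eta) = \Delta\eta + \tfrac{1}{3}\bigl[\text{correction involving } \langle d\eta,\varphi\rangle \text{ and } \psi\bigr].
\]

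The crux is to establish this identity. The strategy is to decompose $d\eta$ along the $G_2$-irreducible decomposition $\Omega^3 = \Omega^3_1 \oplus \Omega^3_7 \oplus \Omega^3_{27}$ recalled in \ref{par:grep}, insert the explicit form of $L_\varphi$ on each irreducible component (the $\tfrac{4}{3}$-eigenvalue on $\Omega^3_1$ reflecting that $\Theta$ is homogeneous of degree $4/3$ in $\varphi$), and expand $*d$ acting on each piece. When $\varphi$ is torsion-free, $L_\varphi$ intertwines $d$ and a multiple of $d^*$, yielding $*dL_\varphi(d\eta) = \Delta\eta$ outright; in the present setting, the failure of this commutator is precisely measured by the non-closedness of $\Theta(\varphi)$, which the hypothesis $d\Theta(\varphi) = *d\psi$ encodes in terms of $\psi$. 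Tracking this contribution produces exactly the $\langle d\eta, \varphi\rangle \psi$ correction term which matches the PDE.

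The main obstacle is carrying out this algebraic identity carefully while keeping track of all coefficients, in particular the factor $\tfrac{1}{3}$, which arises from the projection formula $\pi_1 \xi = \tfrac{1}{7}\langle \xi,\varphi\rangle\varphi$ combined with the $\tfrac{4}{3}$ eigenvalue of $L_\varphi$ on $\Omega^3_1$. A clean way to organize the calculation is to work with the $G_2$-compatible connection $\widetilde{\nabla} = \nabla - \tau(\varphi)$ from \ref{par:grep} rather than with the Levi-Civita connection, since $\widetilde{\nabla}$ preserves the irreducible decomposition of forms and isolates the torsion contribution into a single tensorial term. Throughout, the smallness assumptions $\|d\eta\|_{C^0}, \|\psi\|_{C^0} \leq \epsilon_1$ enter implicitly via the quadratic estimate stated just before the theorem, ensuring that $\widetilde{\varphi}$ stays positive and that the nonlinear remainder $F(d\eta)$ is well-defined and controlled.
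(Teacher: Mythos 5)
The paper does not prove this statement---it is attributed directly to \cite[Theorem 10.3.7]{joyce2000compact}---so there is no in-paper proof to compare against; I assess your reconstruction on its own terms.

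Your outline has the correct skeleton (Fern\'andez--Gray characterisation, Taylor-expand $\Theta$, substitute the PDE, reduce to an algebraic identity in the $G_2$-irreducible decomposition), but the central claim ``when $\varphi$ is torsion-free, $L_\varphi$ intertwines $d$ and a multiple of $d^*$, yielding $*dL_\varphi(d\eta) = \Delta\eta$ outright'' cannot be right, and this is a genuine gap rather than an imprecision. The quantity $*dL_\varphi(d\eta)$ depends only on the $3$-form $d\eta$, whereas $\Delta\eta = dd^*\eta + d^*d\eta$ carries the exact component $dd^*\eta$, which depends on $\eta$ through $d^*\eta$: replacing $\eta$ by $\eta + \zeta$ with $\zeta$ closed but not coclosed leaves the left side fixed and shifts the right side by $dd^*\zeta \neq 0$. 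The same obstruction persists after inserting the PDE into $*d\Theta(\widetilde\varphi) = *d\psi + *dL_\varphi(d\eta) + *dF(d\eta)$: every term is $g_\varphi$-coexact (each is $*d$ of a $4$-form, or of the form $d^*(\cdot)$) except the single exact piece $dd^*\eta$ coming from $\Delta\eta$, and no pointwise $G_2$-representation-theoretic identity can cancel an exact $2$-form against coexact ones. The step your proposal omits is the Hodge-theoretic argument that resolves this on the compact $M$: the $2$-form $*d\Theta(\widetilde\varphi)$ is itself coexact, being $-d^*(*\Theta(\widetilde\varphi))$, so the computation expresses the exact form $dd^*\eta$ as a difference of coexact forms, and orthogonality of the exact and coexact parts of $\Omega^2(M)$ forces $dd^*\eta = 0$. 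Only after this does the residual identity---now genuinely pointwise in $d\eta$ and $\psi$---become amenable to the irreducible decomposition and eigenvalue bookkeeping you describe; and that bookkeeping is itself only outlined, not carried out. The suggestion to compute with $\widetilde\nabla = \nabla - \tau(\varphi)$ is reasonable for other purposes (in the paper it is used in the proof of Proposition~\ref{prop:quadratic}) but plays no role in establishing this identity.
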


Therefore, one can start with a compact manifold equipped with a closed $G_2$-structure with small torsion and apply a fixed point theorem in order to deform it to a nearby torsion-free $G_2$-structure in the same cohomology class. This involves a good understanding of the linearised problem, as well as some control on the non-linear map $F$ in the form of quadratic estimates. 

\paragraph{} Let us now outline the twisted connected sum construction. The building blocks are a pair of EAC $G_2$-manifolds $(Z_1,\varphi_1)$ and $(Z_2,\varphi_2)$ of rate $\mu > 0$, asymptotic to a cylinder $\R \times X$. Denote by $\varphi_{0,i}$ the asymptotic translation-invariant model for $\varphi_i$. The $G_2$-structures $\varphi_1$ and $\varphi_2$ are said to be matching if there exists an isometry $\gamma$ of the cross-section $X$ such that the map
\begin{equation*}
   \underline \gamma : \R \times X \rightarrow \R \times X, ~~ (t,x) \mapsto (-t, \gamma(x))
\end{equation*}
satisfies $\underline \gamma^*\varphi_{0,2} = \varphi_{0,1}$. If $\gamma_{0,i} = dt \wedge \omega_{0,i} + \re \Omega_{0,i}$ for Calabi-Yau structures $(\omega_{0,i},\Omega_{0,i})$ on $X$, the matching condition amounts to:
\begin{equation*}
    \gamma^* \omega_{0,2} = - \omega_{0,1}, ~~ \gamma^* \re \Omega_{0,2} = \re \Omega_{0,1}.
\end{equation*}
In all known examples \cite{kovalev2003twisted,corti2015g,nordstrom2018extra}, such matching pairs are trivial circle bundles over EAC Calabi-Yau threefolds (or some quotient of it), and the cross-section is isometric to the product of a K3 surface with a flat $2$-torus (or a corresponding quotient). Moreover, the map $\gamma$ is designed so that the family of compact manifolds $M_T$ obtained by gluing $Z_1$ and $Z_2$ along $\gamma$ (see Remark \ref{rem:twist}) has finite fundamental group, in order to construct manifolds with full holonomy $G_2$. The details of the construction of such matching pairs go beyond the scope of the present paper and do not affect our analysis, so we refer to the original references for more details.

Let us denote by $\sigma$ the minimum of $\mu$ and of the square roots of the smallest non-trivial eigenvalues of the Laplacian acting on $2$- and $3$-forms on $X$. As we have seen in \ref{par:closedforms}, the closed forms $\varphi_i$ and $\Theta(\varphi)$ admit an expansion:
\begin{equation*}
    \varphi_i = \varphi_{i,0} + d \eta_i, ~~~ \Theta(\varphi_i) = \Theta(\varphi_{i,0}) + d\xi_i
\end{equation*}
where $\eta_i \in \Omega^2(Z_i)$, $\xi_i \in \Omega^3(Z_i)$ and all their covariant derivatives have exponential decay in $O(e^{-\delta \rho_i})$, where $\rho_i$ are cylindrical coordinate functions on $Z_i$ and $0 < \delta < \sigma$. Pick a cutoff function $\chi : \R \rightarrow [0,1]$ such that $\chi \equiv 0$ in $(-\infty,-\frac 1 2]$ and $\chi \equiv 1$ in $[\frac 1 2, \infty)$. We can build $1$-parameter families of closed forms:
\begin{equation*}
    \varphi_{i,T} = \varphi_i - d(\chi(\rho_i-T-1) \eta_i), ~~~ \Theta_{i,T} = \Theta(\varphi_i)-  d(\chi(\rho_i-T-1) \xi_i)
\end{equation*}
For $T$ large enough $\varphi_{i,T}$ is a $G_2$-structure, which is closed by construction. Patching up $\varphi_{1,T}$ with $\varphi_{2,T}$ and $\Theta_{1,T}$ with $\Theta_{2,T}$ as in \ref{par:closedforms}, the $1$-parameter family of compact manifolds $M_T$ obtained by gluing $Z_1$ and $Z_2$ along $\gamma$ is endowed with a family of closed $G_2$-structures $\varphi_T$ and a family of closed $4$-forms $\Theta_T$. Let us write $\psi_T = \Theta(\varphi_T) -\Theta_T$, so that $d\psi_T = d\Theta(\varphi_T)$. By construction, we have estimates of the form:
\begin{equation}        \label{eq:estpsi}
    \|\psi_T\|_{C^k} = O \left(e^{-\delta T} \right)
\end{equation}
for any $k \geq 0$ and $0 < \delta < \sigma$, and similarly with Sobolev norms \cite[Lemma 4.25]{kovalev2003twisted}. 

For $T$ large enough we are in the correct setup to apply Theorem \ref{thm:torsionfree} an seek solutions $\eta \in \Omega^2(M_T)$ with $\| d\eta \|_{C^0} \leq \epsilon_1$ solving the equation:
\begin{equation}        \label{eq:ttorsionfree}
    \Delta_T \eta + * d \left( \left(1+ \frac 1 3 \langle d\eta, \varphi_T \rangle \right) \psi_T \right) + *dF(d\eta) = 0 .
\end{equation}
It follows from \cite[Theorem 11.6.1]{joyce2000compact} that for $T$ large enough this equation admits a solution $\eta_T$, so that $\widetilde \varphi_T = \varphi_T + d\eta_T$ is a torsion-free $G_2$-structure on $M_T$. However, the proof of \cite[Theorem 5.34]{kovalev2003twisted} that there are estimates of the form $\|\widetilde \varphi_T - \varphi_T \|_{C^0} = O(e^{-\delta T})$ seems to be incorrect as the asymptotic kernel considered for the linearised problem does not have the dimension of the space of harmonic $2$-forms on $M_T$. Rather, its dimension is the sum of the dimensions of the kernel of the Laplacian acting on decaying $2$-forms on $Z_1$ and of the kernel of the Laplacian acting on $2$-forms with less than exponential growth on $Z_2$. As the cross-section has non-zero first and second Betti numbers this asymptotic kernel is never trivial, whereas there are examples of twisted connected sums with $b_2(M_T) = 0$. Using Theorem \ref{thm:main} and Corollary \ref{cor:laplacian} we can fix this issue. Moreover, we obtain a control on the norms of all the derivatives of $\widetilde \varphi_T - \varphi_T$ in $O(e^{-\delta T})$, for any $0 < \delta < \sigma$. Note that if $\delta > 0$ is small enough, one can also apply the general result of Joyce to get $C^0$-estimates in $O(e^{- \delta T})$, but it does not give a control on the derivatives.

\paragraph{} Let us now explain how to set up a fixed-point argument to prove Corollary \ref{cor:tcsestimates}. For $T$ large enough let us consider the differential operator $P_T : C^\infty(\Lambda^2T^*M_T) \rightarrow C^\infty(\Lambda^2T^*M_T)$ defined by:
\begin{equation}
    P_T \eta = \Delta_T \eta + \frac 1 3 * d ( \langle d \eta, \varphi_T \rangle \psi_T).
\end{equation}
The image of $P_T$ is orthogonal to the space of harmonic $2$-forms. The decay of $\psi_T$ and all its derivatives implies that, for any $k \geq 0$ and $p > 1$, there is a bound of the form $\|(P_T-\Delta_T) \eta \|_{W^{k,p}} = O(e^{-\delta T} \|\eta\|_{W^{2+k,p}})$. Thus it follows from Corollary \ref{cor:laplacian} that for $T$ large enough, the map:
\begin{equation}        \label{eq:defpt}
    \mH^2(M_T)^\perp \cap W^{2+k,p}(\Lambda^2 T^*M_T) \rightarrow \mH^2(M_T)^\perp \cap W^{k,p}(\Lambda^2 T^*M_T)
\end{equation}
induced by $P_T$ admits a bounded inverse $Q_T$, with norm satisfying
\begin{equation}    \label{eq:estqt}
    \|Q_T\| \leq C T^2
\end{equation}
for some constant $C > 0$. On the other hand, we also have a control:
\begin{equation}        \label{eq:estpsiw}
    \|*d\psi_T \|_{W^{k,p}} \leq C^\prime e^{- \delta T }
\end{equation}
for any $\delta < \sigma$ from \eqref{eq:estpsi}. 

It remains to obtain a good control on the non-linear part of \eqref{eq:ttorsionfree}. The key result for our purpose are the following quadratic estimates:

\begin{prop}     \label{prop:quadratic}
    Let $p \geq 7$ and let $k \geq 1$ be an integer. There exist constants $\epsilon_{k,p} > 0$ and $C_{k,p} > 0$ such that for $T$ large enough, if $\xi, \xi' \in W^{k,p}(\Lambda^3T^*M_T)$ satisfy the condition $\|\xi\|_{W^{k,p}}, \|\xi^\prime\|_{W^{k,p}} \leq \epsilon_{k,p}$ then:
    \begin{equation*}
        \|F(\xi)-F(\xi^\prime)\|_{W^{k,p}} \leq C_{k,p} \| \xi-\xi^\prime\|_{W^{k,p}} (\|\xi\|_{W^{k,p}} + \|\xi^\prime\|_{W^{k,p}} ) .
    \end{equation*}
\end{prop}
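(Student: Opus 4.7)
The plan is to bootstrap Joyce's pointwise quadratic estimate $|F(\xi)-F(\xi')| \leq C|\xi-\xi'|(|\xi|+|\xi'|)$ into a $W^{k,p}$ estimate, paying careful attention to uniformity in $T$. The first step is to recall that the map $F$ is the remainder in the Taylor expansion of $\Theta$ at $\varphi_T$; because $\Theta$ is a smooth $GL_+$-equivariant function of a $3$-form, we may write $F(\xi) = Q(\varphi_T,\xi)\cdot\xi\otimes\xi$ where $Q$ is a smooth tensorial map in $\xi$ and polynomial in the background $G_2$-structure $\varphi_T$ and its algebraic inverse. Since $\varphi_T$, $g_T$ and all their covariant derivatives are bounded uniformly in $T$ (by construction, using a finite number of pieces from the building blocks), for any $j \geq 0$ we obtain pointwise estimates of the form
\begin{equation*}
    |\nabla_T^j(F(\xi)-F(\xi'))| \leq C_j \sum_{\substack{i_0+i_1+i_2 = j \\ }}|\nabla_T^{i_0}(\xi-\xi')|\,(|\nabla_T^{i_1}\xi|+|\nabla_T^{i_1}\xi'|)(|\nabla_T^{i_2}\xi|+|\nabla_T^{i_2}\xi'|) + \text{(cubic or higher in }\xi,\xi'\text{)},
\end{equation*}
valid as soon as $|\xi|+|\xi'|$ is pointwise smaller than some universal constant.

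The second step is to integrate this pointwise identity to get $L^p$ control on each term $\|\nabla_T^{i_0}(\xi-\xi')\,\nabla_T^{i_1}\xi\,\nabla_T^{i_2}\xi\|_{L^p}$ with $i_0+i_1+i_2 \leq k$. For this I will use Hölder's inequality combined with a Gagliardo--Nirenberg/Moser-type interpolation of the form $\|\nabla_T^i u\|_{L^{kp/i}} \leq C\|u\|_{C^0}^{1-i/k}\|u\|_{W^{k,p}}^{i/k}$, after which a product of three factors is controlled by $\|\cdot\|_{C^0}\|\cdot\|_{W^{k,p}}\|\cdot\|_{W^{k,p}}$. Sobolev embedding $W^{k,p}(M_T) \hookrightarrow C^0(M_T)$ holds for $kp>7$ (in particular for $k\geq 1,\;p\geq 7$ since $\dim M_T = 7$ and at the borderline case one uses a slightly refined embedding), so the $C^0$-factor is also controlled by $\|\cdot\|_{W^{k,p}}$. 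The smallness assumption $\|\xi\|_{W^{k,p}},\|\xi'\|_{W^{k,p}}\leq \epsilon_{k,p}$ ensures we remain in the domain of validity of the pointwise identity, and also absorbs the higher-order (cubic and beyond) contributions into the quadratic estimate. Combining all these bounds yields exactly
\begin{equation*}
    \|F(\xi)-F(\xi')\|_{W^{k,p}} \leq C_{k,p}\|\xi-\xi'\|_{W^{k,p}}(\|\xi\|_{W^{k,p}}+\|\xi'\|_{W^{k,p}}).
\end{equation*}

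The main obstacle is ensuring that every constant appearing in the chain of estimates is independent of $T$. Two points require care: first, the constants in the Gagliardo--Nirenberg and Sobolev embeddings must be uniform in $T$, which is exactly the content of Remark \ref{rem:sobemb}; second, the pointwise bounds on $\varphi_T$, $g_T$, the Christoffel symbols of $\nabla_T$, and their iterated covariant derivatives must be uniform in $T$, which follows from the fact that the gluing cutoffs and the asymptotic translation-invariant models on the neck have uniformly bounded geometry. Once these two uniformities are in hand, the rest of the argument is essentially Joyce's proof of \cite[Proposition 10.3.5]{joyce2000compact} carried out at the level of derivatives; no new ideas are needed beyond carefully tracking the $T$-dependence. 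The range $p\geq 7$ is precisely what is needed to make the Sobolev embedding $W^{1,p}\hookrightarrow C^0$ (or $W^{k,p}\hookrightarrow C^0$ for $k\geq 2$) available in dimension $7$.
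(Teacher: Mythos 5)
Your approach is correct in spirit and arrives at the same estimate, but it takes a genuinely different route from the paper. The paper works in a general $G$-structure framework: it exploits the $GL_+(7,\R)$-equivariance of $\Theta$ together with a connection $\widetilde\nabla_T = \nabla_T - \tau(\varphi_T)$ \emph{compatible with} $\varphi_T$. In a local trivialisation of the $G_2$-structure bundle the map $\Theta$ becomes a fixed, $x$-independent function, and the section $u_0 = \varphi_T$ is $\widetilde\nabla_T$-parallel; the Faà di Bruno formula $\widetilde\nabla_T^k\Theta(\varphi_T + \xi) = \sum C_{j_1\cdots j_l}D^l_{\varphi_T+\xi}\Theta(\widetilde\nabla_T^{j_1}\xi,\ldots)$ then involves only derivatives of $\xi$ with \emph{universal} constants (depending only on the group $G_2$ and the choice of $\epsilon_k$, not on $T$ or the manifold). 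The whole $T$-dependence is isolated into one comparison $\|\widetilde\nabla_T - \nabla_T\| = \|\tau(\varphi_T)\| = O(e^{-\delta T})$ at the very end. You instead work directly with the Levi-Civita connection $\nabla_T$, which is \emph{not} compatible with $\varphi_T$; this forces you to track $\nabla_T^j\varphi_T$ (which is nonzero, though $O(e^{-\delta T})$) in the chain rule, and to argue separately that $\varphi_T$, $g_T$ and all their iterated $\nabla_T$-derivatives are uniformly bounded. That claim is true in this gluing construction, so your argument closes, but the uniformity of the pointwise constants $C_j$ requires more bookkeeping than the paper's route; what the paper's approach buys is that those constants are genuinely universal and require no estimate on the background. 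Two small points to correct: (i) the Taylor remainder coefficient $Q(\varphi_T,\cdot)$ is smooth (real-analytic) but \emph{not} polynomial in $\varphi_T$ and its inverse, because of the normalisation $|\varphi|^2 = 7$ used to define $g_\varphi$; this is immaterial to the argument. (ii) In your pointwise estimate the constraint should be $i_0+i_1+i_2 \leq j$, not $= j$, since some covariant derivatives land on the coefficient function (equivalently on $\varphi_T$); moreover as written the displayed term is already cubic, and what closes the quadratic bound is the absorption of one factor into the constant using $\|\xi\|_{C^{k-1}} \lesssim \|\xi\|_{W^{k,p}} \leq \epsilon_{k,p}$ — the paper makes this absorption step explicit via the $C^{k-1}$-smallness hypothesis, and you should do the same before invoking the Gagliardo--Nirenberg interpolation.
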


\begin{rem}
    This proposition contains the fact that $F(\xi)$ is well-defined in a neighbourhood of $0$ in $W^{k,p}$. Essentially we need $p \geq 7$ and $k \geq 1$ in order to have a Sobolev embedding $W^{k,p}(\Lambda^3 T^*M_T) \hookrightarrow C^{k-1}(\Lambda^3T^*M_T)$.
\end{rem}

We shall prove this proposition in the next part. Once this is established, Corollary \ref{cor:tcsestimates} follows by applying a contraction-mapping argument to the map  $\eta \mapsto P_T \eta + * dF(d\eta)$, defined on an appropriately small ball in $W^{2+k,p}(\Lambda^2 T^*M_T) \cap \mH^2(M_T)^\perp$, for some choice of $p \geq 7$ and $k \geq 1$. By \eqref{eq:estqt} and \eqref{eq:estpsiw}, for small enough $\epsilon > 0$ and large enough $T$, equation \eqref{eq:ttorsionfree} admits a unique solution $\eta_T$ in a ball of radius $\frac{\epsilon}{T^2}$ centered at $0$, which moreover satisfies $\| \eta_T \|_{W^{2+k,p}} = O(e^{-\delta T})$ for any $0 < \delta < \sigma$. As the norm of the Sobolev embedding $W^{1+k,p} \hookrightarrow C^k$ is uniformly bounded (Remark \ref{rem:sobemb}) this implies that $\widetilde \varphi_T = \varphi_T + d\eta_T$ satisfies $\| \widetilde \varphi_T - \varphi_T \|_{C^k} = O(e^{-\delta T})$ as $T \rightarrow \infty$.

    \subsection{Proof of the quadratic estimates}       \label{subsection:quadratic}

\paragraph{} In order to prove Proposition \ref{prop:quadratic} it is useful to work at some level of generality. Essentially, what we need is to improve \cite[Proposition 10.3.5]{joyce2000compact} in order to control higher order derivatives. The key property which allows us to obtain good estimates on the derivatives of $F$ is the equivariance of the map $\Theta$ under the action of $GL_+(7,\R)$. Thus we consider the following setting. Let $M$ be an oriented $n$-dimensional manifold, which need not be compact or complete as we will mostly work locally. Let us denote by $\mathscr{P}$ the oriented frame bundle of $M$, considered as a $GL_+(n,\R)$-bundle. Let $G \subset SO(n)$ be a compact subgroup, and $\mathscr Q \subset \mathscr P$ a $G$-structure inducing a Riemannian metric $g$ on $M$. We implicitly endow all representations of $G$ with an invariant inner product, and consider the corresponding metrics on associated bundles.

Let $(\rho_0,V_0)$, $(\rho_1,V_1)$ be representations of $GL_+(n,\R)$ and let $E_i = \mathscr P \times_{\rho_i} V_i$ be the associated bundles. Let $O_0$ be an open subset of $V_0$ stable under $\rho_0$, and let $\Upsilon : O_0 \rightarrow V_1$ be a smooth map which is equivariant under the action of $GL_+(n,\R)$, that is:
\begin{equation}        \label{eq:equivariance}
    \Upsilon(\rho_0(g) u) = \rho_1(g)\Upsilon(u), ~~ \forall g \in GL_+(n,\R).
\end{equation}
Consider $U_0 = \mathscr P \times_{\rho_0} O_0$ as an open subbundle of $E_0$. Then $\Upsilon$ induces a bundle map $U_0 \rightarrow V_1$, which we still denote by $\Upsilon$. Let us remark that the differential map:
\begin{equation*}
    D \Upsilon : O_0 \rightarrow \End(V_0,V_1)
\end{equation*}
is also $G$-equivariant. Indeed, if we differentiate \eqref{eq:equivariance} with respect to $u$ we obtain:
\begin{equation*}
    D_{\rho_0(g)u}\Upsilon(\rho_0(g) \dot u) = \rho_1(g) D_u \Upsilon(\dot u), ~~ \forall g \in GL_+(n,\R) ~ \text{and} ~ \forall \dot u \in V_0. 
\end{equation*}
Thus we have a family of $G$-equivariant maps
\begin{equation*}
    D^k\Upsilon : O_0 \rightarrow V_0^* \otimes \cdots \otimes V_0^* \otimes V_1
\end{equation*}
and induced maps on the corresponding bundles. 

Let $\nabla$ be a connection on $M$ which is compatible with $\mathscr Q$. In particular, we can find trivialisations of $\mathscr P$ with transition maps valued in $G$ and local connection forms valued in its Lie algebra $\mathfrak g$. If $u$ is a local section of $U_0$, we can compute in local trivialisations:
\begin{align*}
    \nabla \Upsilon(u) & = d\Upsilon(u) + d\rho_1(A) \Upsilon(u) \\
    & = D_u \Upsilon(d u) + D_u\Upsilon(d\rho_0 (A) u) \\
    & = D_u\Upsilon(\nabla u)
\end{align*}
where $A$ is the local connection form. To pass from the first to the second line we differentiated \eqref{eq:equivariance} with respect to $g$ to obtain the identity:
\begin{equation*}
    d\rho_1(a) \Upsilon(u) = D_u\Upsilon(d\rho_0(a) u), ~~ \forall a \in \mathfrak {gl}_n ~ \text{and} ~ u \in O_0 .
\end{equation*}
Given the equivariance of the maps $D^l\Upsilon$, we can deduce by iteration that:
\begin{equation*}
    \nabla^k \Upsilon(u) = \sum_{l=1}^k  ~ \sum_{j_1+\cdots+j_l = k, ~ j_i \geq 1} C_{j_1 \dots j_l} D^l_u \Upsilon(\nabla^{j_1}u,...,\nabla^{j_l}u)
\end{equation*}
where $C_{j_1 \ldots j_l}$ are universal combinatorial coefficients. 

Suppose now that $u_0 \in O_0$ is invariant under the action of $G$. Then, it induces a section of $U_0$ which is parallel for $\nabla$ and we have:
\begin{equation*}
    \nabla u_0 = \nabla \Upsilon(u_0) = \nabla D_{u_0}\Upsilon  = 0.
\end{equation*}
As $O_0$ is open in $V_0$, there exists a universal constant $\epsilon_0 > 0$ such that if $u$ is a section of $E_0$ with $\|u\|_{C^0} \leq \epsilon_0$, then $u_0 + u$ is a section of $U_0$. Here the $C^0$-norm is measured with respect to the bundle metric induced by $\mathscr Q$.  Thus if $\|u\|_{C^0} \leq \epsilon_0$ we may write:
\begin{multline*}
    \nabla^k(\Upsilon(u_0+u) - \Upsilon(u_0) - D_{u_0} \Upsilon(u)) = (D_{u_0+u}\Upsilon- D_{u_0} \Upsilon)(\nabla^k u) \\ + \sum_{l=2}^k \sum_{j_1, \ldots ,j_l} C_{j_1 \ldots j_l} D^l_{u_0+u} \Upsilon(\nabla^{j_1}u, \ldots ,\nabla^{j_l}u).
\end{multline*}
In local trivialisations, we may consider $D^l_{u_0+u}\Upsilon$ as fixed maps defined on a small ball of radius $\epsilon_0$ in $V_0$. Let $k \in \N$ and pick some $0 < \epsilon_k < \epsilon_0$. If $u,v$ are contained in the closed ball of radius $\epsilon_k$ in $V_0$, there are estimates of the form:
\begin{equation}        \label{ineq:dlupsi}
    |D^l_{u_0+u} \Upsilon | \leq C_l , ~~   |D^l_{u_0+u}\Upsilon - D^l_{u_0+v} \Upsilon | \leq C^\prime_l |u-v|, ~~~ \forall 0 \leq l \leq k
\end{equation}
for some constants $C_l, C^\prime_l$ depending only on the choice of $\epsilon_k$.

\paragraph{} Let us now write $R(u) = \Upsilon(u_0+u) - \Upsilon(u_0) - D_{u_0} \Upsilon(u)$, defined on a ball of radius $\epsilon_0$ in $V_0$. If $u,v$ are sections of $E_0$ with $C^0$-norm less than $\epsilon_0$, we want to bound the quantity $|\nabla^k (R(u) - R(v))|$. The triangular inequality yields:
\begin{multline*}
    |(D_{u_0+u}\Upsilon- D_{u_0} \Upsilon)(\nabla^k u)- (D_{u_0+v}\Upsilon- D_{u_0} \Upsilon)(\nabla^k v)|  \leq |(D_{u_0+u} \Upsilon - D_{u_0+v} \Upsilon)(\nabla^k u)| \\ + |(D_{u_0+v} \Upsilon-D_{u_0}\Upsilon)(\nabla^k u-\nabla^k v)| .
\end{multline*}
Assuming that $u,v$ moreover satisfy $\|u\|_{C^0}, \| v \|_{C^0} \leq \epsilon_k$ and using \eqref{ineq:dlupsi} we obtain:
\begin{multline}        \label{eq:nablak}
    |(D_{u_0+u}\Upsilon- D_{u_0} \Upsilon)(\nabla^k u)- (D_{u_0+v}\Upsilon- D_{u_0} \Upsilon)(\nabla^k v)| \leq C (|u-v| \cdot |\nabla^k u|) \\ + C^\prime(|v| \cdot |\nabla^k u-\nabla^k v|)
\end{multline}
for some universal constants which depend only on the choice of $\epsilon_k$. Similarly, for $2 \leq l \leq k$ we have universal estimates of the form:
\begin{multline*}
    | D^l_{u_0+u} \Upsilon(\nabla^{j_1}u, \ldots ,\nabla^{j_l}u) - D^l_{u_0+v} \Upsilon(\nabla^{j_1}v, \ldots ,\nabla^{j_l}v)| \leq C (|u-v| \cdot |\nabla^{j_1} u | \cdots | \nabla^{j_l} u |) \\ + C_1 |\nabla^{j_1}u-\nabla^{j_1} v | \cdot |\nabla^{j_2}u | \cdots |\nabla^{j_l}u| + \cdots + C_l |\nabla^{j_1}v | \cdots |\nabla^{j_{l-1}} v| \cdot|\nabla^{j_l}u-\nabla^{j_l} v | .
\end{multline*}
Each term in the above sum only contains factors of $|\nabla^{j} u|$ or $|\nabla^j v|$ with $1 \leq j \leq k-1$, so that the only terms containing factors of $|\nabla^k u|$, $|\nabla^kv|$ or $|\nabla^ku-\nabla^k v|$ come from \eqref{eq:nablak}. Thus, if we impose not only $\|u\|_{C^0}, \|v\|_{C^0} \leq \epsilon_k$ but also $\sum_{0 \leq l \leq k-1} \|\nabla^l u \|_{C^0} \leq \epsilon_k$ and similarly for $v$, we obtain an estimate:
\begin{multline*}
    |\nabla^k (R(u) - R(v))| \leq C |u-v| \cdot (|\nabla^k u| + |\nabla^k v|) + C^\prime |\nabla^ku-\nabla^k v| \cdot (|u|+|v|) \\ + C^{\prime\prime} \sum_{0 \leq l,l^\prime \leq k- 1} |\nabla^l u - \nabla^l v| \cdot (|\nabla^{l^\prime} u| + |\nabla^{l^\prime} v |)
\end{multline*}
for some universal constants $C,C^\prime,C^{\prime\prime}$ depending only on our choice of $\epsilon_k$. 

Globally, this implies that if $u, v$ are sections of $E_0$ satisfying $\|u\|_{C^{k-1}}, \| v \|_{C^{k-1}} \leq \epsilon_k$, and $u,v$ are in $W^{k,p}$ for some $p > 1$, we have:
\begin{multline*}
    \|\nabla^k(R(u)-R(v))\|_{L^p} \leq C \|u-v\|_{C^{k-1}} \cdot (\|u\|_{W^{k,p}} + \|v\|_{W^{k,p}}) \\ + C^\prime (\|u\|_{C^{k-1}}+\|v\|_{C^{k-1}}) \|u-v\|_{W^{k,p}}
\end{multline*}
for some uniform constants. In particular if we are on a compact manifold and $p \geq n$, the $W^{k,p}$-norms controls the $C^{k-1}$ norm. Thus, there exists an $\epsilon_{k,p} > 0$ such that if $\|u\|_{W^{k,p}} \leq \epsilon_{k,p}$ then $\|u\|_{C^{k-1}} \leq \epsilon_k$, so that $F(u)$ is well-defined and in $W^{k,p}$. If moreover $v$ is another section satisfying $\|v\|_{W^{k,p}} \leq \epsilon_{k,p}$, then we have an estimate:
\begin{equation*}
    \|R(u)-R(v)\|_{W^{k,p}} \leq C_{k,p} \| u-v \|_{W^{k,p}} (\|u\|_{W^{k,p}} + \|v\|_{W^{k,p}}).
\end{equation*}
for some constant $C_{k,p}$. Note however that $\epsilon_{k,p}$ and $C_{k,p}$ are not universal, as they depend on the manifold $(M,g)$ through the constant in the Sobolev embedding $W^{k,p} \hookrightarrow C^{k-1}$. Moreover, it is important to note that to obtain such  estimates we implicitly redefine the $W^{k,p}$ and $C^k$-norms using the compatible connection $\nabla$, and not the Levi-Civita connection of the metric induced by $\mathscr Q$ on $M$. This yields equivalent definitions, but to use the above estimates we also need to take into account the extra constants coming from the fact that we are dealing with a different definition of the usual norms.

\paragraph{} We want to apply this reasoning to the family of compact manifolds $M_T$ equipped with the $G_2$-structures $\varphi_T$, where we consider the equivariant map $\Theta$. In this case, the norm of the Sobolev embedding $W^{k,p} \hookrightarrow C^{k-1}$ can be bounded independently of $T \geq 1$ (see Remark \ref{rem:sobemb}). However, we cannot directly apply the above argument to the Levi-Civita connection $\nabla_T$ of $g_T$ as it is not compatible with $\varphi_T$. Nevertheless, we have seen in \ref{par:grep} that the torsion of $\varphi_T$, which is represented here by $d\Theta(\varphi_T)$, can be identified with a $1$-form $\tau(\varphi_T)$ valued in the bundle $\Lambda^2_7M_T$. Moreover the connection $\widetilde \nabla_T = \nabla_T - \tau(\varphi_T)$ is compatible with $\varphi_T$. The torsion $\tau(\varphi_T)$ satisfies
\begin{equation*}
    \left\|\nabla_T^l \tau(\varphi_T) \right\|_{C^0} = O \left(e^{-\delta T} \right)
\end{equation*}
for $l \geq 0$ and any small enough $\delta > 0$. Hence it follows that for $T$ large enough, the $W^{k,p}$ and $C^{k-1}$-norms defined with respect to $\nabla_T$ or $\widetilde \nabla_T$ are equivalent up to a constant of the form $1+O(e^{-\delta T})$, so that for all the norms that we need to consider we can work with $\widetilde \nabla_T$ instead of $\nabla_T$. Thus Proposition \ref{prop:quadratic} follows by applying the above argument.

\newpage

\addcontentsline{toc}{section}{References}

\small

\bibliographystyle{abbrv}
\bibliography{ref_neck-stretching}

\end{document}